

\documentclass{cams-l}



\usepackage{amsrefs}


\usepackage{epsfig}  		
\usepackage{epic,eepic}       
\usepackage{graphicx}
\usepackage{lineno}

\usepackage{enumerate}
\usepackage{mathbbol}
\usepackage{amssymb}
\usepackage{braket}
\usepackage{longtable}
\usepackage[colorlinks]{hyperref}
\usepackage{doi}
\usepackage{caption}
\usepackage{mathtools}
\usepackage{microtype}

\DeclareSymbolFontAlphabet{\mathbb}{AMSb}
\DeclareSymbolFontAlphabet{\mathbbl}{bbold}


\newenvironment{claimproof}[1][Proof of Claim]{\begin{proof}[#1]}{\end{proof}}


\newtheorem{lemma}{Lemma}[section]
\newtheorem*{lemma*}{Lemma}
\newtheorem{theorem}[lemma]{Theorem}
\newtheorem*{theorem*}{Theorem}
\newtheorem{corollary}[lemma]{Corollary}
\newtheorem{proposition}[lemma]{Proposition}

\newtheorem*{proposition*}{Proposition}
\newtheorem{fact}[lemma]{Fact}
\newtheorem*{fact*}{Fact}
\newtheorem{notation}[lemma]{Notation}
\newtheorem*{notation*}{Notation}
\newtheorem*{conventions*}{Conventions}
\newtheorem{remark}[lemma]{Remark}
\newtheorem*{remark*}{Remark}

\newtheorem*{corollary*}{Corollary}

\newtheorem*{conjecture*}{Conjecture}

\newtheorem*{problem*}{Problem}

\newtheorem*{question*}{Question}

\newtheorem{assumption*}{Assumption}




\theoremstyle{definition}

\newtheorem*{example*}{Example}
\newtheorem{definition}[lemma]{Definition}
\newtheorem*{definition*}{Definition}




\theoremstyle{remark}
\newtheorem{claim}{Claim}
\newtheorem*{claim*}{Claim}

\newtheorem*{case*}{Case}
\newtheorem*{construction*}{Construction}

\newtheorem*{exercise*}{Exercise}


\numberwithin{equation}{section}


\newcommand{\Z}{\mathbb{Z}}
\newcommand{\C}{\mathbb{C}}
\newcommand{\Q}{\mathbb{Q}}
\newcommand{\I}{\mathcal{I}}


\newcommand{\bs}{\backslash}

\newcommand{\floor}[1]{\left \lfloor #1 \right \rfloor}

\newcommand\CC{{\mathcal C}}

\newcommand\FF{{\mathcal F}}

\newcommand\II{{\mathcal I}}
\newcommand\JJ{{\mathcal J}}

\newcommand\LL{{\mathcal L}}

\newcommand\PP{{\mathcal P}}




\newcommand\<{\langle}
\renewcommand\>{\rangle}

\newcommand{\tp}{\mathrm{tp}}

\newcommand{\qftp}{\mathrm{qftp}}

\newcommand{\Ra}{\Rightarrow}

\newcommand{\La}{\Leftarrow}

\def\Ind#1#2{#1\setbox0=\hbox{$#1x$}\kern\wd0\hbox to 0pt{\hss$#1\mid$\hss}
\lower.9\ht0\hbox to 0pt{\hss$#1\smile$\hss}\kern\wd0}

\def\notind#1#2{#1\setbox0=\hbox{$#1x$}\kern\wd0
\hbox to 0pt{\mathchardef\nn=12854\hss$#1\nn$\kern1.4\wd0\hss}
\hbox to 0pt{\hss$#1\mid$\hss}\lower.9\ht0 \hbox to 0pt{\hss$#1\smile$\hss}\kern\wd0}





\def\includeE#1{{\lhook\kern-3.5pt\joinrel\smash{
		\mathop{\longrightarrow}\limits^{#1}}}}

\def\efor/{Example~\ref{E4}}

\def\BL/{Baldwin--Lachlan}
\def\Bu/{Buechler}
\def\Hr/{Hrushovski}
\def\lm/{locally modular}
\def\wm/{weakly minimal}
\def\nm/{non--modular}
\def\ss/{superstable}
\def\ud/{unidimensional}
\def\sm/{strongly minimal}

\def\abar{\bar{a}}

\def\bbar{\bar{b}}
\def\cbar{\bar{c}}
\def\dbar{\bar{d}}
\def\ebar{\bar{e}}

\def\hbar{\bar{h}}

\def\mbar{\overline{m}}
\def\nbar{\bar{n}}

\def\pbar{\bar{p}}

\def\vbar{\bar{v}}
\def\wbar{\bar{w}}
\def\xbar{\bar{x}}
\def\ybar{\bar{y}}
\def\zbar{\bar{z}}

\def\dom{{\rm dom}}

\def\tr/{trivial}
\def\nt/{non--trivial}
\def\st/{strong type}

\def\abar{\bar{a}}
\def\bbar{\bar{b}}
\def\cbar{\bar{c}}
\def\dbar{\bar{d}}
\def\ebar{\bar{e}}
\def\A{{\mathcal A}}

\def\C{{\mathfrak  C}}

\def\P{{\mathcal P}}
\def\Q{{\mathbb Q}}

\def\Z{{\mathbb Z}}

\def\dom{{\rm dom}}

\def\range{{\rm range}}

\def\Fa0{{\FF^a_{\aleph_0}}}

\def\<{\langle}
\def\>{\rangle}

\newcommand\myrestriction{\mathord\restriction}
\def\mr#1{\myrestriction_{#1}}

\renewcommand{\C}{\mathfrak{C}}

\def\rtp{{\rm rtp}}

\def\qftp{{\rm qftp}}

\def\P{{\mathcal P}}

\newcommand\e{e}
\newcommand\eprec{\prec_\exists}
\newcommand\etp{{\rm tp_\exists}}

\newcommand\ertp{{\rm rtp}_{\exists}}

\title{Existential characterizations of monadic NIP}
\author{Samuel Braunfeld}
\address{Computer Science Institute, Charles University. Prague, Czechia, and The Czech Academy of Sciences, Institute of Computer Science, Pod Vod\'{a}renskou v\v{e}\v{z}\'{\i} 2, 182 00 Prague, Czech Republic.}
\thanks{This paper is part of a project that has received funding from the 
European Research Council (ERC) under the European Union's Horizon 2020 
research and innovation programme (grant agreement No 810115 - Dynasnet). Samuel Braunfeld is further supported by Project 24-12591M of the Czech Science Foundation (GA\v{C}R), and supported partly by the long-term strategic development financing of the Institute of Computer Science (RVO: 67985807).}

\author{Michael C. Laskowski}
\address{University of Maryland, College Park. 4176 Campus Dr., College Park, MD 20742}
\thanks{Michael Laskowski is partially supported by NSF grants DMS-1855789 and DMS-2154101.}

\begin{document}
	\begin{abstract}
	We show that if a universal theory is not monadically NIP, then this is witnessed by a canonical configuration defined by an existential formula. As a consequence, we show that a hereditary class of relational structures is NIP (resp. stable) if and only if it is monadically NIP (resp. monadically stable). As another consequence, we show that if such a class is not monadically NIP, then it has superexponential growth rate.
\end{abstract}
\maketitle

\section{Introduction}
A common theme in both logic and combinatorics is analyzing the complexity of classes of structures, and identifying jumps in the spectrum of possible complexities. For illustration, consider (the asymptotics of) the function counting the number of structures of a given size in the class as one such complexity measure. Surveys concerning this counting problem in hereditary classes of finite structures, i.e. those closed under taking substructures,  include \cites{bollobas12007hereditary, klazar2010some}. The analogue of this problem in the case of the class of models of a complete first-order theory and where the sizes considered are infinite guided the discovery and development of many of the fundamental concepts of model theory \cite{shelah1990classification}. Prominent among these model-theoretic concepts, in increasing strength, are NIP, stability, and NFCP; these are properties of a class of structures defined by excluding certain combinatorial configurations that serve as canonical witnesses to complicated behavior. Model theorists have developed several tools to analyze theories with these properties. Although these tools primarily apply to infinite structures, one can extend a class of finite structures to include infinite structures, apply the tools to the infinite structures in the extended class, and then try to pull the results down to the finite.

When studying the combinatorial behavior of the models of a complete theory, NFCP, stability, and NIP repeatedly appear as dividing lines between tame and wild behavior. Recently their monadic variants, which strengthen these properties by requiring that they remain true under arbitrary coloring of the points of the structures in any number of colors, have seen many applications in hereditary classes of finite structures. For examples concerning the asymptotics of the counting function,  \cite{laskowski2022jumps} shows that monadic NFCP (there called mutual algebraicity) plays a key role in identifying a jump in the possible asymptotics, while \cite{braunfeld2022monadic} shows that monadic stability explains the jump from sub-exponential to exponential growth in a setting corresponding to orbit-counting for certain group actions, and \cite{bonnet2024twin} shows that monadic NIP explains the jump from exponential growth to factorial-type growth for classes of ordered graphs. One of the results of this paper (Theorem \ref{thm:intro growth}) extends one direction of this last result beyond ordered graphs, showing that if a class of relational structures is not monadically NIP then it has at least factorial-type growth. Further applications of these properties in the finite setting, including to questions of algorithmic tractability, are surveyed in \cite{pilipczuk2025graph}.

The appearance of \emph{monadic} NFCP/stability/NIP in the results above is curious, and likely unexpected to a model theorist. After all, the monadic aspect is concerned with coloring the points of structures, but such colorings play no role in the statements or their proofs. The main result of this paper provides an explanation for this: in hereditary classes of relational structures, stability and NIP are equivalent to their monadic variants. The monadic versions of these properties appear as dividing lines in hereditary classes because there they agree with their non-monadic counterparts.

\begin{theorem}[Theorem \ref{thm:coll}, Theorem \ref{thm:coll mon}] \label{thm:intro coll}
	Let $\P \in \set{\textrm{stable, NIP}}$. Let $\CC$ be a hereditary class of relational structures. The following are equivalent.
	\begin{enumerate}
		\item $Th(\CC)_{\forall}$ is monadically $\P$.
		\item $Th(\CC)$ is monadically $\P$.
		\item $Th(\CC)_{\forall}$ is $\P$.
		\item $Th(\CC)$ is $\P$.
	\end{enumerate}
	
	Furthermore, if $\CC$ is monotone, then $Th(\CC)$ is NIP if and only if $Th(\CC)$ is monadically stable.
\end{theorem}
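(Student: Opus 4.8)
The plan is to separate Theorem~\ref{thm:intro coll} into the four-way equivalence $(1)$--$(4)$, which is the content of Theorems~\ref{thm:coll} and~\ref{thm:coll mon} proved below, and the concluding sentence about monotone classes, which I would deduce from those together with the monotone dichotomy of~\cite{AdAd}. For $(1)$--$(4)$ the engine is the result announced in the abstract: a universal theory that is not monadically $\P$ carries a canonical configuration definable by an \emph{existential} formula. The point is then that, because $\CC$ is relational, quantifier-free formulas are absolute between a structure and its substructures and existential formulas are preserved under extension; since every model of $Th(\CC)_{\forall}$ is a substructure of a model of $Th(\CC)$ and, conversely, every model of $Th(\CC)$ is a model of $Th(\CC)_{\forall}$, an existentially defined configuration witnessing a failure of tameness transfers freely between $Th(\CC)$ and $Th(\CC)_{\forall}$, and (after adjoining a unary predicate naming a submodel and relativizing quantifiers to it) between a theory and its expansions by unary predicates. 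This forces $\P$, monadic $\P$, and their universal restrictions to coincide; I will not reproduce the implication-chase here.

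Granting $(1)$--$(4)$, the concluding sentence is short. A monotone class is in particular hereditary, so the equivalence $(2)\Leftrightarrow(4)$ applies to it; with $\P=\textrm{stable}$ it reads: $Th(\CC)$ is stable if and only if it is monadically stable. Hence it suffices to show that for monotone $\CC$, $Th(\CC)$ is NIP if and only if $Th(\CC)$ is stable --- and for a monotone class this is exactly the dichotomy of Adler and Adler~\cite{AdAd}: $Th(\CC)$ is NIP $\iff$ $\CC$ is nowhere dense $\iff$ $Th(\CC)$ is stable, the substantive implication being that a somewhere dense monotone class, having all finite graphs among its shallow topological minors at some fixed depth, already yields the independence property. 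Chaining these, $Th(\CC)$ is NIP $\iff$ $Th(\CC)$ is stable $\iff$ $Th(\CC)$ is monadically stable, as claimed; the reverse direction of the stated equivalence is anyway immediate, since monadically stable implies stable implies NIP. One could equally route through the $\P=\textrm{NIP}$ case: $Th(\CC)$ NIP $\iff$ $Th(\CC)$ monadically NIP $\iff$ $\CC$ nowhere dense $\iff$ $Th(\CC)$ monadically stable, using~\cite{AdAd} for the last two steps.

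The genuine difficulty of Theorem~\ref{thm:intro coll} sits entirely in constructing the existential canonical configuration underlying $(1)$--$(4)$; relative to that, the final sentence is bookkeeping. The points needing a little care in the deduction are: reconciling the class-level notions of~\cite{AdAd} (``$\CC$ is stable / NIP / nowhere dense'') with the theory-level formulations used here --- a routine remark, since an order or independence pattern of unbounded size over members of $\CC$ is precisely the order or independence property for $Th(\CC)$ --- and, if the members of $\CC$ are genuinely relational rather than graphs, checking that the monotone dichotomy still applies, for instance by passing to Gaifman graphs or incidence structures in a way that preserves monotonicity. Alternatively one might avoid~\cite{AdAd} and argue inside the paper's framework: the canonical configuration witnessing a failure of monadic \emph{stability} while monadic NIP persists is existential, hence survives into substructures, and in a monotone class --- where tuples may be freely deleted --- such a ``half-graph''-type pattern should open out into a genuine independence configuration, directly contradicting NIP; but making this precise would again require the details of the configuration, so I would keep the~\cite{AdAd} route as the primary argument.
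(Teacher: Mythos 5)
There are two genuine gaps. First, in the four-way equivalence the only nontrivial implication is $(4)\Rightarrow(1)$, and your transfer argument does not reach it. Moving an existentially defined pre-coding (or split) configuration between models of $Th(\CC)$ and $Th(\CC)_{\forall}$, or relativizing it to a unary predicate, only reproduces a witness to the failure of \emph{monadic} NIP; it does not by itself produce the independence property for $Th(\CC)$, which is what $(4)\Rightarrow(1)$ requires. The paper's Lemma \ref{l:collNIP} is where this conversion happens: starting from a $\P$-indiscernible split configuration (Corollary \ref{splitexists}), one encodes an arbitrary finite bipartite graph $G$ by passing to a carefully chosen \emph{finite substructure} $M'_G$ that retains the witness tuples $\cbar_{i,j}$ only for the edges $(i,j)$ of $G$; because the coding formula is existential, deleting witnesses can only destroy instances, and indiscernibility together with an order-preserving reindexing argument shows that no stray witnesses survive. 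This use of hereditariness to control an existential formula is the heart of the theorem and is entirely absent from your sketch, which dismisses it as an ``implication-chase.'' (The stable case of $(4)\Rightarrow(1)$ also needs the additional input from \cite{BS} that a monadically NIP theory which is not monadically stable is already unstable.)

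Second, the monotone clause cannot be obtained by citing \cite{AdAd}: that result concerns monotone \emph{graph} classes, whereas the theorem is stated for arbitrary relational languages, and the paper explicitly presents Theorem \ref{thm:coll mon} as a generalization of \cite{AdAd} answering part of \cite[Problem 5.1]{Spars}; your fallback of passing to Gaifman graphs or incidence structures does not obviously preserve either monotonicity or the relevant dividing lines in both directions. Your alternative sketch (``a half-graph-type pattern should open out into an independence configuration'') is closer to the paper's actual route, but it is missing the key lemma that makes it work: Theorem \ref{prop:qf op}, which shows that a universal theory that is monadically NIP but not monadically stable has the order property witnessed by an \emph{atomic} formula. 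Atomicity, not mere existentiality, is what allows one to delete individual relation instances in a monotone class so as to thin the atomic half-graph into a random bipartite graph and contradict NIP.
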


Since the models of a monadically NIP theory have a tree-like structure (see the characterization in \cite{pwidth}, while the characterizations in \cite{MonNIP} even show an order-like structure if we look coarsely enough), this theorem suggests that we should expect a sharp dichotomy in hereditary classes: either the structures admit tree-like decompositions, or the class is at least as complicated as the class of all graphs. This dichotomy may also partially explain the ubiquity of tree-like decompositions in structural graph theory.

The key step behind Theorem \ref{thm:intro coll} is that if a universal theory is not monadically NIP, then this is witnessed by a configuration defined by an {\em existential} formula.

\begin{theorem} [Theorem \ref{thm:e pre-code}] \label{thm:intro pre}
	For a universal theory $T$, the following are equivalent.
	\begin{enumerate}
		\item $T$ is monadically NIP.
		\item $T$ does not admit pre-coding by an existential formula.
		\item $T$ has the $\e$-f.s. dichotomy
	\end{enumerate}
	
	In particular, if $T$ is not monadically NIP then it admits pre-coding by an existential formula.
\end{theorem}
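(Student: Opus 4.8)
The plan is to prove the cycle $(1)\Rightarrow(3)\Rightarrow(2)\Rightarrow(1)$, using condition $(3)$ as a stepping stone so that two of the implications are essentially bookkeeping and concentrating the work on $\neg(1)\Rightarrow\neg(2)$. Two consequences of universality are used throughout: first, an expansion of $T$ by unary predicates is again universal, since no axiom mentions the new predicates, so ``monadic'' expansions stay inside the universal world; second, a universal theory has existentially closed monster models realizing every quantifier-free type consistent with it, so an existential formula behaves like a statement witnessed inside a finitely generated substructure. I would therefore first develop the existential analogues of the relevant notions — $\e$-indiscernible sequences, $\e$-finite satisfiability, and the existential monster $\U$ — and re-run the machinery of \cite{MonNIP} inside the existential fragment.

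For $(1)\Rightarrow(3)$: a monadically NIP theory satisfies the finite-satisfiability dichotomy of \cite{MonNIP}, and the $\e$-f.s.\ dichotomy is its restriction to existential formulas; the only genuine point is that an $\e$-indiscernible sequence can be refined, by Ramsey inside a suitable unary expansion, to an honestly indiscernible sequence there, after which the full dichotomy applies and projects back. For $(3)\Rightarrow(2)$ I would argue the contrapositive, which should be a direct unwinding of definitions: a pre-coding configuration witnessed by an existential $\phi(\xbar;\ybar;\zbar)$ displays, along a single $\e$-indiscernible sequence $I$, one parameter family realizing an existential type finitely satisfiable in $I$ and another realizing essentially its complement along $I$, and this is exactly the pair of behaviours the $\e$-f.s.\ dichotomy forbids — the finite-satisfiability clause being supplied by the indiscernibility built into the configuration.

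The heart of the matter is $(2)\Rightarrow(1)$, which I would prove contrapositively (possibly routing through $(3)$, i.e.\ showing $\neg(1)\Rightarrow\neg(3)\Rightarrow\neg(2)$): from the failure of monadic NIP, produce a pre-coding configuration defined by an existential formula. Begin with a unary expansion $T^{+}$ of $T$ that is not NIP — still universal — and turn the failure of NIP, via the combinatorial core of \cite{MonNIP}, into a coding configuration in a monster of $T^{+}$ witnessed by some $L^{+}$-formula $\psi$; then extract, by iterated Ramsey, a canonical form with all relevant sequences mutually $\e$-indiscernible. One now wants to replace $\psi$ by an existential (indeed quantifier-free) formula: passing to an existentially closed monster $\U$ of $T^{+}$ and enlarging each tuple of the configuration by witnesses for the existential content of $\psi$, one arranges that the coding pattern is governed by quantifier-free formulas over the enlarged tuples, the choice of witnesses being made coherent across the configuration by the $\e$-indiscernibility arranged above; the richness (all finite patterns realizable) transports by indiscernibility, and universality of $T$ and $T^{+}$ keeps the enlarged structures inside the class. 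Reabsorbing the unary predicates into parameters and existentially quantifying yields the desired existential $L$-formula, and the ``in particular'' clause is this implication in contrapositive form.

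I expect the main obstacle to be precisely this replacement of $\psi$ by an existential formula without losing the richness that defines a pre-coding configuration. Two points are delicate: the \emph{negative} instances — an existential formula, once true, persists under extension, so one must either lean on a positively flavoured form of the definition of pre-coding or argue through negations of existential formulas using universality of $T^{+}$; and the passage into the existentially closed $\U$, where the needed witnesses become available — one must check this does not collapse the coding pattern, which requires that the ``rich'' content of the configuration is carried by positive/existential data and hence survives the embeddings involved (and, for $\psi$ with genuinely alternating quantifiers, that one can still extract \emph{some} existential formula coding along the configuration, using the freedom that coding provides). In short, the technical labour lies in carrying out every indiscernible-extraction and pattern-realization step of the \cite{MonNIP} argument entirely within the existential fragment.
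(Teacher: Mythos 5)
Your overall architecture (a cycle through $(3)$, with the work concentrated in ``not monadically NIP implies existential pre-coding'') matches the paper's, and you correctly locate the central obstacle: the negative instances demanded by a pre-coding configuration do not survive a naive ``existentialization'' of a formula. But your concrete plan for the key step does not close this gap. You propose to start from an arbitrary coding formula $\psi$ in a unary expansion and convert it to an existential formula by enlarging the tuples of the configuration with witnesses for the existential content of $\psi$. This handles only the positive instances; for $\psi$ with genuine quantifier alternation there is no reason the conditions $\neg\phi(\dbar_s,\dbar_v,c_{s,t})$ for $v>t$ and $\neg\phi(\dbar_u,\dbar_t,c_{s,t})$ for $u<s$ become negations of a single existential formula after adding witnesses, and you offer no mechanism for making them so. The paper never attempts this conversion. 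Instead, the existential formula is obtained at the source: a failure of the $\e$-f.s.\ dichotomy is by definition witnessed by two existential formulas $\rho_1,\rho_2$ (obstacles to finite satisfiability of existential types), and Lemma~\ref{l:fs lift} lifts this failure into a genuine elementary extension $\C\succ M$, where the construction from \cite{MonNIP} applies and yields a pre-coding configuration whose formula is $\rho_1\wedge\rho_2$ --- existential for free (Lemma~\ref{l:e-pre}).

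The other half you underestimate is $(3)\Rightarrow(1)$, equivalently $\neg(1)\Rightarrow\neg(3)$. The $\e$-f.s.\ dichotomy is not ``the restriction of the f.s.\ dichotomy to existential formulas'': it concerns existential types in existentially closed models, and finitely satisfiable existential types need not extend to larger parameter sets while remaining finitely satisfiable, so the decomposition machinery of \cite{MonNIP} does not transfer directly. The paper needs a new characterization of monadic NIP purely in terms of counting quantifier-free types over $\II$-partitions (Proposition~\ref{p:blobwidth}), together with transfer lemmas between the existentially closed and saturated settings (Lemmas~\ref{l:ind ext} and~\ref{l:e ind to fs}), in order to run the argument inside the existential fragment and then project back to arbitrary models. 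Neither this characterization nor the lifting Lemma~\ref{l:fs lift} appears in your outline, and your description of $(1)\Rightarrow(3)$ as bookkeeping about refining indiscernible sequences misplaces where the difficulty actually sits.
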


While \cite{MonNIP} showed a pre-coding configuration must definably appear in theories that are not monadically NIP, the existential condition allows for much greater control and more direct finitization, and we expect further applications of this result, as in \cite{horizons}.

To illustrate that Theorem \ref{thm:intro pre} makes (the failure of) monadic NIP manifest within standardly-considered combinatorics, we finish by using it to show that if a hereditary class $\CC$ is not monadically NIP then it has superexponential growth rate.

\begin{theorem} [Theorem \ref{thm:growth}] \label{thm:intro growth}
	Let $\CC$ be a hereditary class of relational structures. If $Th(\CC)$ is not monadically NIP, then there is some $k \in \omega$ such that the unlabeled growth rate $f_\CC(n) = \Omega(\floor{n/k}!)$. 
\end{theorem}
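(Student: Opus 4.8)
The plan is to combine Theorem \ref{thm:intro pre} with a direct counting argument. Since $Th(\CC)$ is not monadically NIP and it is a universal theory (being the universal theory of a hereditary class), Theorem \ref{thm:intro pre} gives that $Th(\CC)$ admits pre-coding by an existential formula $\varphi(\xbar;\ybar)$. Unwinding the definition of pre-coding, there is a model $M \models Th(\CC)$ containing, for arbitrarily large $n$, a configuration consisting of tuples $(\abar_i)_{i<n}$ and $(\bbar_j)_{j<n}$ together with an injective partial function on indices — or rather the full coding pattern — such that $\varphi$ picks out a sufficiently rich bipartite-type pattern (a ``grid'' or ``half-graph''-like pattern with enough independence to encode orderings or matchings). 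The first step is to make this finitary: because $\varphi$ is existential, each instance $\varphi(\abar_i;\bbar_j)$ that holds in $M$ is witnessed by finitely many additional elements, so we may pass to a finite substructure $N \subseteq M$, of size $O(n)$ (say at most $kn$ for a constant $k$ depending only on the arities and on $\varphi$), which already contains the $n\times n$ pre-coding pattern. Since $\CC$ is hereditary and $N$ embeds in a model of $Th(\CC)$, we get $N \in \CC$.

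The heart of the argument is then to count the number of non-isomorphic structures in $\CC$ on at most $kn$ vertices that arise this way. The pre-coding pattern should allow us to encode, on top of the fixed ``skeleton'' $N$, an arbitrary permutation (or an arbitrary graph/matching on $n$ vertices, depending on the exact form of pre-coding) by deciding, for each $i$, which $\bbar_j$ it is linked to via $\varphi$ — and crucially these choices are mutually independent, so all $n!$ (or $2^{\binom n2}$) choices are realized and give pairwise non-isomorphic structures, since the isomorphism type records the encoded object. Concretely, for each permutation $\sigma \in \Sym(n)$ we build $N_\sigma \in \CC$ of size at most $kn$ by taking the appropriate substructure realizing exactly the instances $\varphi(\abar_i;\bbar_{\sigma(i)})$; distinct $\sigma$ yield non-isomorphic $N_\sigma$ because any isomorphism would have to respect the definable relation $\varphi$ and the (definable, from parameters in the skeleton) distinction of the $\abar_i$'s and $\bbar_j$'s, hence recover $\sigma$. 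This gives at least $n!$ structures in $\CC$ on at most $kn$ vertices, i.e. $f_\CC(kn) \ge n!$, equivalently $f_\CC(m) \ge \floor{m/k}!$, which is the claimed $\Omega(\floor{n/k}!)$ bound.

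The main obstacle I expect is twofold. First, one must extract from the formal definition of ``pre-coding by an existential formula'' (Theorem \ref{thm:e pre-code}) exactly the combinatorial pattern that supports independent encoding of a permutation or matching — pre-coding as defined in \cite{MonNIP} involves a somewhat delicate configuration (indiscernible sequences, a connecting formula, and a non-trivial pattern ruling out a ``finite satisfiability'' collapse), and one needs to check that a genuine $n \times n$ ``free'' bipartite pattern, not merely a half-graph, can be found — a half-graph alone only gives single-exponential growth, so the argument really uses the full strength of failure of monadic NIP rather than failure of NIP. Second, the non-isomorphism claim requires care: the encoded permutation must be recoverable from the abstract isomorphism type, which means the two ``sides'' of the pattern and the individual tuples within each side must be distinguishable in $N_\sigma$ — this typically requires adding a bounded number of extra ``marker'' elements to the skeleton (still only $O(n)$ many, so absorbed into the constant $k$) to rigidify it, or invoking that the pre-coding configuration already comes with such asymmetry built in. Once these points are settled, the counting is immediate and the growth-rate bound follows. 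I would also remark that the exponent constant $k$ is explicitly the size blow-up in the finitization step, so it depends only on $\varphi$ (its arity and quantifier count) and the signature, consistent with the statement that $k$ merely exists.
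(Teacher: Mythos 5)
Your high-level plan (extract an existential pre-coding pattern, finitize it into structures of linear size, encode many combinatorial objects, count) is the same as the paper's, but the two points you flag as ``expected obstacles'' are not loose ends to be tidied up --- they are the entire content of the proof, and the way you propose to pass over them does not work. First, the pre-coding configuration is \emph{not} a free bipartite pattern: $\exists z\,\phi(\dbar_s,\dbar_t,z)$ holds for \emph{every} $s<t$, and the witness $c_{s,t}$ is only guaranteed not to witness the pairs $(u,t)$ with $u<s$ and $(s,v)$ with $v>t$; it may perfectly well witness $\phi(\abar_u,\bbar_v,\cdot)$ for other pairs. So including or deleting witnesses does not let you ``realize exactly the instances $\varphi(\abar_i,\bbar_{\sigma(i)})$.'' The paper's repair is substantial: pass to a $\P$-indiscernible split configuration chosen with minimal $\lg(\wbar)$ and $\lg(\xbar)+\lg(\ybar)$ (Corollary~\ref{splitexists}), duplicate the index tuples (Remark~\ref{duplication}), and replace $\phi$ by $\chi(\xbar^*,\ybar^*,z)=\phi(\xbar,\ybar,z)\wedge\neg\phi(\xbar',\ybar,z)\wedge\neg\phi(\xbar,\ybar',z)$, which by indiscernibility is satisfied by $c^0_{k,\ell}$ iff $(k,\ell)$ is exactly the designated pair. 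Even then one must arrange that in a substructure of size $O(n)$ the inner existential witnesses $\wbar$ survive for exactly the intended instances; the paper does this by bounding those quantifiers to unary ``strips'' and surrounding each retained index with ``cilia'' of $2m_c+1$ points so witnesses can be found locally (Claim~\ref{first} in Proposition~\ref{prop:transd}).

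Second, the non-isomorphism claim needs the encoded object to be an invariant of the \emph{bare} finite structure, i.e.\ the two sides and the edge relation must be definable on singletons in a bounded unary expansion. This is exactly Proposition~\ref{prop:transd}, whose proof requires the strip predicates, the constants $\abar_{i^*},\bbar_{j^*}$, the auxiliary predicates $E_I,F_I,E_J,F_J,N$, and a minimality argument (via \cite[Lemma 4.11]{MonNIP}) to show that $\alpha(\xbar)$ defines $\{\abar_i:i\in S\}$ in $N_{S,T}$; ``adding a few markers to rigidify the skeleton'' does not substitute for this, and one must also observe at the end that the finite unary expansion costs only an exponential factor in $f_\CC(n)$. (Your choice to encode permutations rather than the paper's bipartite graphs with $n$ edges and no isolated vertices, counted via \cite[Theorem 1.5]{rapid}, is an inessential difference.) As written, then, the proposal restates the theorem's difficulty rather than resolving it.
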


After preliminary material in Section \ref{sec:prelim}, Theorem \ref{thm:intro pre} is proved in Section \ref{sec:precode}. The proof recapitulates some of the material of \cite{MonNIP} in the setting of existentially closed models of a universal theory, but must ultimately return to the setting of saturated models and connect to the characterizations given in \cite{MonNIP}. Section \ref{sec:precode} also shows that if a universal theory is monadically NIP but not monadically stable, then there is an atomic formula witnessing the order property. We prove Theorem \ref{thm:intro coll} in Section \ref{sec:app}, by manipulating a generalized indiscernible instance of the configuration provided by Theorem \ref{thm:intro pre}. Further manipulations of this configuration in Section \ref{sec:growth} yield Theorem \ref{thm:intro growth}.

\subsection{Acknowledgments} We thank Gregory Cherlin for suggesting the proof of Lemma \ref{l:ind ext}. We thank the referees for their suggestions greatly improving the clarity of presentation.

\section{Preliminaries} \label{sec:prelim}

Model theory typically takes place in the category whose objects are models of a complete theory $T$ and whose morphisms are elementary embeddings. 
For understanding models of a complete theory, it  convenient to work  in a large saturated ``monster model''  $\C$ of $T$.  
Throughout the paper, 
when we are proving something about a complete theory $T$, we will implicitly assume that we are working within such a $\C$.
In particular, 
all tuples and sets that we consider will be `small subsets'
of $\C$ and all models $M$ will be elementary submodels of $\C$. 

By contrast, in many places we will need to analyze models of {\em universal} theories $T$ that are not complete.   In those settings, it is 
useful to  work in the category whose objects are existentially closed models of $T$ and whose morphisms are $\e$-embeddings, i.e., preserve existential formulas.

To motivate our interest in  universal theories, we begin by showing that for monadic properties, only the universal part of a theory is relevant. 

\begin{definition} \label{def:prop}
	We will say an incomplete theory $T$ is (monadically) stable/NIP if this is true for all completions of $T$.
\end{definition}

\begin{notation}
	Given a theory $T$, we let $T_{\forall}$ denote the set of universal sentences that are logical consequences of  $T$.
\end{notation}

The following  semantic equivalent of $T_\forall$ will be used repeatedly.

\begin{fact} \label{f:univ sub}
	Let $T$ be an $\LL$-theory. Then  for any $\LL$-structure $M$, 
	$M \models T_{\forall}$ if and only if there is some $N \models T$ such that $M \subset N$.
\end{fact}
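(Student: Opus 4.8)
Fact: Let $T$ be a theory. Then $M \models T_{\forall}$ if and only if there is some $N \models T$ such that $M \subset N$.

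This is a classical fact in model theory. Let me think about how to prove it.\textbf{Plan.} This is the classical Łoś–Tarski-type characterization of models of the universal part of a theory, and I would prove it by the standard compactness/diagram argument. The easy direction is immediate: if $M \subset N$ and $N \models T$, then since universal sentences pass downward to substructures, $M \models \sigma$ for every $\sigma \in T_{\forall}$, so $M \models T_{\forall}$.

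For the converse, suppose $M \models T_{\forall}$. I want to find $N \models T$ with $M \subset N$; equivalently, I want a model of $T \cup \mathrm{Diag}(M)$, where $\mathrm{Diag}(M)$ is the atomic diagram of $M$ in the language $L(M)$ expanded by a constant for each element of $M$ (a model of that theory contains an isomorphic copy of $M$ as a substructure). By compactness it suffices to show every finite subset is satisfiable, so fix finitely many sentences $\delta_1, \dots, \delta_n \in \mathrm{Diag}(M)$ and let $\delta = \bigwedge_i \delta_i$; this is a quantifier-free $L(M)$-sentence mentioning only finitely many parameters $\bar{a}$ from $M$, say $\delta = \varphi(\bar{a})$ with $\varphi(\bar{x})$ quantifier-free over $L$. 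I must show $T \cup \{\varphi(\bar{a})\}$ is consistent.

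\textbf{Key step.} Suppose not: then $T \vdash \forall \bar{x}\, \neg\varphi(\bar{x})$, and since $\forall \bar{x}\, \neg\varphi(\bar{x})$ is a universal $L$-sentence, it lies in $T_{\forall}$ (here one uses that $T_{\forall}$ is, up to logical equivalence, exactly the set of universal consequences of $T$ — this is the only slightly delicate point, handled by the usual prenex-normal-form argument showing any universal consequence of $T$ is equivalent to one literally in $T$ after closing under consequence, or one simply defines $T_{\forall}$ as the universal consequences; in either reading the step is routine). But then $M \models \forall \bar{x}\, \neg\varphi(\bar{x})$, contradicting $M \models \varphi(\bar{a})$ witnessed by $\bar{a} \in M$. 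Hence $T \cup \mathrm{Diag}(M)$ is finitely satisfiable, so by compactness has a model $N'$; taking $N$ to be the $L$-reduct of $N'$ and identifying $M$ with its image under the natural embedding $m \mapsto (c_m)^{N'}$ gives $M \subset N \models T$.

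\textbf{Main obstacle.} There is no real obstacle here — the argument is entirely standard. The only point requiring a word of care is the identification of $T_{\forall}$ with the universal consequences of $T$ (so that the universal sentence $\forall \bar{x}\,\neg\varphi(\bar{x})$ produced above is genuinely in $T_{\forall}$); this is harmless since any reasonable reading of the notation makes it so, and in any case one may safely replace $T$ by its deductive closure throughout without affecting models.
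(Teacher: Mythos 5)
Your proof is correct: this is the standard Łoś–Tarski-style diagram-plus-compactness argument, and the paper states the result as a Fact without proof, so there is nothing to compare against beyond noting that this is exactly the classical argument being invoked. You are also right to flag the one genuine subtlety — the paper's Definition of $T_{\forall}$ as ``the set of universal sentences in $T$'' only makes the Fact literally true if $T$ is read as deductively closed (or $T_{\forall}$ as the universal \emph{consequences} of $T$), and your resolution of passing to the deductive closure is the correct and harmless fix.
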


\begin{lemma} \label{l:mon sub}
	Let $\P \in \set{\textrm{stable, NIP}}$. Suppose $N$ is monadically $\P$ and $M \subset N$. Then $M$ is monadically $\P$.
\end{lemma}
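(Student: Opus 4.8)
The plan is to verify directly that every monadic expansion of $M$ has $\P$ theory, by lifting it to a monadic expansion of $N$. Fix an expansion $M^+ = (M, \bar U)$ of $M$ by unary predicates $\bar U = (U_i)_i$, where each $U_i \subseteq M$; since stability and NIP are checked formula by formula, we may take $\bar U$ finite. Now form the expansion $N^+ := (N, \bar U, P)$ of $N$, where each $U_i$ is reinterpreted as a subset of $N$ (legitimate since $U_i \subseteq M \subseteq N$) and $P$ is a new unary predicate naming the universe $M$. Then $N^+$ is an expansion of $N$ by unary predicates, so by the hypothesis that $N$ is monadically $\P$, the theory $Th(N^+)$ is $\P$.

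The core of the argument is that $M^+$ is, up to a trivial reduct, the relativization of $N^+$ to the predicate $P$. Let $\hat M$ be the induced substructure of $N^+$ on the set $P(N^+) = M$: its universe is $M$, it interprets each symbol $R$ from the language of $N$ as $R^{N^+}\cap M^{\mathrm{ar}(R)}$ (which equals $R^M$, as $M$ is a substructure of $N$), each $U_i$ as $U_i$, and $P$ as all of $M$. Because $P$ names $M$ inside $N^+$, an induction on formulas shows $\hat M \models \phi(\bar a)$ iff $N^+ \models \phi^P(\bar a)$ for every formula $\phi$ of the language of $N^+$ and every tuple $\bar a$ from $M$, where $\phi^P$ relativizes all quantifiers of $\phi$ to $P$. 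Thus $\hat M$ is parameter-freely interpretable in $N^+$ --- its domain is the definable set $P$ and each basic relation is cut out on $P$ by an $N^+$-formula --- and since interpretations preserve stability and NIP, $Th(\hat M)$ is $\P$. Finally $M^+$ is the reduct of $\hat M$ obtained by forgetting $P$, and reducts preserve stability and NIP, so $Th(M^+)$ is $\P$. As $M^+$ was arbitrary, $M$ is monadically $\P$.

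The only delicate point is the role of the auxiliary predicate $P$: without it the passage from $N^+$ to $M^+$ would just be ``restrict to a substructure,'' under which quantified formulas are not absolute and neither stability nor NIP need survive; naming $M$ is exactly what upgrades this to an interpretation and makes the transfer valid. If one prefers to avoid citing preservation of $\P$ under interpretations, one can argue contrapositively: a formula over $Th(M^+)$ witnessing IP (or the order property) relativizes, via $\phi \mapsto \phi^P$, to a formula over $Th(N^+)$ exhibiting the same configuration on the same parameters --- all of which lie in $P$ --- contradicting that $Th(N^+)$ is $\P$.
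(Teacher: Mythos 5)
Your proof is correct and uses the same key idea as the paper: expand $N$ by the predicates from the expansion of $M$ plus a new unary predicate naming $M$, and relativize quantifiers to that predicate. The paper states this contrapositively (relativizing the formula that witnesses the failure of $\P$ in the expansion of $M$), which is exactly the alternative you sketch in your final sentence, so the two arguments coincide.
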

\begin{proof}
	If $M$ is not monadically $\P$, then there is some unary expansion $(M, U_1, \dots, U_k)$ containing arbitrarily large finite approximations to the configuration witnessing the failure of $\P$, uniformly defined by $\phi$. Let $U$ be a unary predicate such that $U(N) = M$, interpret $U_i(N) = U_i(M)$ for every $i \in [k]$, and let $\phi'$ be obtained by restricting all quantifiers in $\phi$ to $U$. Then $\phi'$ also defines arbitrarily large finite approximations to the configuration witnessing $\P$ in $(N, U, U_1, \dots, U_k)$.
\end{proof}

\begin{proposition} \label{p:mon univ}
	Let $\P \in \set{\textrm{stable, NIP}}$. Let $T$ and $T'$ be theories, such that $T_{\forall} \subseteq T'_{\forall}$. If $T$ is monadically $\P$, then $T'$ is monadically $\P$.
	
	In particular,  $T$ is monadically $\P$ if and only if $T_{\forall}$ is monadically $\P$.
\end{proposition}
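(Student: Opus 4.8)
The plan is to reduce the statement to its model-by-model content via Fact \ref{f:univ sub} and Lemma \ref{l:mon sub}. First I would unwind the definitions. For a possibly incomplete theory, being monadically $\P$ means (Definition \ref{def:prop}) that every completion is monadically $\P$; since the completions of $T'$ are exactly the theories $Th(M')$ of models $M' \models T'$, and since a model being monadically $\P$ is by convention the assertion that its complete theory is monadically $\P$, it suffices to show that every model of $T'$ is monadically $\P$.

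So fix an arbitrary $M' \models T'$. Then $M' \models T'_{\forall}$, and since $T_{\forall} \subseteq T'_{\forall}$ we get $M' \models T_{\forall}$. By Fact \ref{f:univ sub} there is some $N \models T$ with $M' \subseteq N$. Because $T$ is monadically $\P$ and $Th(N)$ is a completion of $T$, the model $N$ is monadically $\P$. Now apply Lemma \ref{l:mon sub} to the inclusion $M' \subseteq N$: it yields that $M'$ is monadically $\P$. As $M'$ was an arbitrary model of $T'$, every completion of $T'$ is monadically $\P$, i.e. $T'$ is monadically $\P$. This proves the first assertion.

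For the "in particular" clause I would apply the first assertion twice, using that $(T_{\forall})_{\forall} = T_{\forall}$. Taking $T' = T_{\forall}$, the hypothesis $T_{\forall} \subseteq T'_{\forall}$ holds (with equality), so $T$ monadically $\P$ implies $T_{\forall}$ monadically $\P$. Conversely, applying the first assertion with the roles of the theories played by $T_{\forall}$ and $T$ — the hypothesis is now $(T_{\forall})_{\forall} = T_{\forall} \subseteq T_{\forall}$, again an equality — gives that $T_{\forall}$ monadically $\P$ implies $T$ monadically $\P$.

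The only point that needs any care, and the closest thing to an obstacle, is the bookkeeping in the first paragraph: identifying "$T'$ is monadically $\P$" with "every model of $T'$ is monadically $\P$". This rests on the facts that monadic $\P$-ness of a complete theory is a property of the theory (hence shared by all its models, and determined by any one of them) and that the set of completions of $T'$ coincides with $\{Th(M') : M' \models T'\}$. Once this is in place, the argument is a direct invocation of Fact \ref{f:univ sub} and Lemma \ref{l:mon sub}, with no further calculation required.
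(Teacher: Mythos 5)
Your proof is correct and follows essentially the same route as the paper's: fix $M'\models T'$, observe $M'\models T_\forall$, embed it into a model of $T$ via Fact \ref{f:univ sub}, and conclude with Lemma \ref{l:mon sub}. Your extra care with Definition \ref{def:prop} and the two applications for the ``in particular'' clause are fine elaborations of what the paper leaves implicit.
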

\begin{proof}
	Fix $M' \models T'$. Since $T_{\forall} \subseteq T'_{\forall}$, $M' \models T_{\forall}$, so by Fact \ref{f:univ sub} there is $M \models T$ such that $M' \subset M$. As $M$ is monadically $\P$, so is $M'$ by Lemma \ref{l:mon sub}.
\end{proof}

Looking ahead, at least for theories in relational languages, Corollary~\ref{stengthen2.5} will significantly strengthen this result.  

\subsection{Existentially closed models} \label{sec:ec}

When working with universal theories $T$, it is frequently useful to restrict attention to the set of existential formulas,  i.e., formulas $T$-equivalent to formulas
of the form $\exists\ybar\delta(\xbar,\ybar)$,
where $\delta$ is quantifier free. The initial notions of this section are standard, and \cite[Chapter 8.1-8.2]{hodges1993model} can serve as a reference for our facts stated without proof.   

\begin{definition}
	Let $T$ be a universal theory. 
	\begin{itemize}  
	\item  For $M\models T$ and $A,\mbar$ from $M$, $\etp^M(\mbar/A)=\{\hbox{existential}\ \phi(\xbar,\abar):M\models \phi(\mbar,\abar),\abar\subset A\}$.  
	When $M$ is understood, we simply write $\etp(\mbar/A)$ and when $A$ is empty we write $\etp(\mbar)$.
	\item For $M\subset N$, both models of $T$, we write $M\prec_\exists N$ if $M\models\phi(\mbar)$ if and only if $N\models\phi(\mbar)$ for every existential formula
	$\phi(\xbar)$ and every $\mbar\in M^{\lg(\xbar)}$.  
	
	\item  We say  $M \models T$ is {\em existentially closed}   if for any $\mbar \in M$ and existential $\phi(\xbar)$, if there is some $N \supset M$ such that $N \models T$ and $N \models \phi(\mbar)$, then already $M \models \phi(\mbar)$.
	
	\end{itemize}
\end{definition}

Trivially, if $M\subset N$ are models of $T$,  then $\etp^M(\mbar/A)\subset \etp^N(\mbar/A)$, but in general, equality need not hold.  
For an arbitrary $M\models T$ and $\mbar$ from $M$, we say {\em $\etp(\mbar)$ is maximal existential} if $\etp^N(\mbar)=\etp^M(\mbar)$ for every $N\supseteq M$ with $N\models T$.  To understand this, define an {\em obstruction to an existential formula $\phi(\xbar)$} to be an existential formula $\psi(\xbar)$ for which $T\models\forall\xbar [\psi(\xbar)\rightarrow \neg\phi(\xbar)]$.

The following Fact, which shows that a model of $T$ being existentially closed can be construed as an omitting types statement,
 is easily proved by  unpacking the definitions and compactness.

\begin{fact}  The following are equivalent for a model $M$ of a universal theory $T$.
\begin{enumerate}
\item  $M$ is existentially closed;
\item  $\etp(\mbar)$ is maximal for every finite $\mbar$ from $M$;
\item  For every existential formula $\phi(\xbar)$, $M$ omits the type
$\Gamma_\phi(\xbar):=\{\neg\phi(\xbar)\}\cup\{\neg\psi(\xbar):\ \hbox{$\psi(\xbar)$ is an obstruction to $\phi(\xbar)$}\}$.
\end{enumerate}
\end{fact}

\begin{fact}  \label{ecexists}
Let $T$ be a universal theory. 
\begin{enumerate}
\item
For every $M \models T$, there is $N \supset M$ such that $N$ is an existentially closed model of $T$ and $|N|=|M| + |T|$.
\item  The class of existentially closed models of $T$ is closed under unions of chains.
\end{enumerate}
\end{fact}

Life inside an existentially closed model of $T$ is pleasant.

\begin{fact} \label{insideec}    Let $T$ be universal and $N$  an existentially closed model of $T$.
\begin{enumerate}
\item  For any $M\subset N$, $M$ is an existentially closed  model of $T$ if and only if  $M\prec_\exists N$.
\item  For any subset $B\subset N$, there is an existentially closed model $M$ of $T$ with $B\subset M\subset N$ and $|M|=|B|+|T|$.
\end{enumerate}
\end{fact}

We describe  some `large' existentially closed models of $T$.  

\begin{definition}  Let $T$ be a universal theory and let $\kappa\ge|T|$ be a cardinal.  A model $M\models T$ is {\em $\kappa^+$-existentially closed } if
$M$ is existentially closed and for every set $\Gamma(\xbar)$ of existential $L(M)$-formulas with $|\Gamma(\xbar)|\le \kappa$, if
$\Gamma(\xbar)$ is finitely satisfied in $M$, then there is some $\bbar\in M^{\lg(\xbar)}$ realizing $\Gamma(\xbar)$.
\end{definition}  

One should think of a $\kappa^+$-existentially closed model of $T$ as being an approximation to  a $\kappa^+$ saturated model, but only in regard to existential types over subsets of $M$ of size at most $\kappa$.
Note that if $M$ is $\kappa^+$-existentially closed, then $|M|>\kappa$.  Also, since each of the relevant $\Gamma(\xbar)$ has size at most $\kappa$, $|\lg(\xbar)|\le \kappa$.  

\begin{lemma} \label{onekappa}  Suppose $T$ is universal and $\kappa\ge |T|$.  For every existentially closed model\ $M\models T$ with $|M|=2^\kappa$, there is an existentially closed model $N\supseteq M$, 
with $|N|=2^\kappa$ that realizes every set $\Gamma(\xbar)$ of existential $L(M)$-formulas with $|\Gamma(\xbar)|\le \kappa$ that is finitely satisfied in $M$.
\end{lemma}

\begin{proof}  There are only $2^\kappa$ such $\Gamma(\xbar)$, hence there is an elementary extension $M'\succeq M$ of size $2^\kappa$ realizing all of these.
$M'$ need not be an existentially closed model, but we can apply Fact~\ref{ecexists} to get such an $N\supset M'$.
\end{proof}

\begin{lemma}  \label{kappaecexists}
 Suppose $T$ is universal, $M\models T$ and $\kappa\ge|M|+|T|$.   Then there is a $\kappa^+$-existentially closed model  $N\models T$ with $N\supset M$ and $|N|=2^\kappa$.
\end{lemma}

\begin{proof}  Using Fact~\ref{ecexists} choose an existentially closed $M_0\models T$ of size $2^\kappa$ containing $M$.  Then iterate Lemma~\ref{onekappa} $\kappa^+$ times,
taking unions at limit ordinals.  
\end{proof}  

We see that $\kappa^+$-existentially closed models of $T$ are rather homogeneous.  Under the assumption that $2^\kappa=\kappa^+$, a $\kappa^+$-existentially closed model of size $2^\kappa$ is fully homogeneous, but without it, we restrict to extending a single $\e$-map (defined below) to an automorphism.

\begin{definition}  Let  $T$ be universal, $\kappa\ge |T|$, and   $M$  a $\kappa^+$-existentially closed model of $T$.
\begin{itemize}
\item  An {\em $\e$-map} is a bijection $f:A\rightarrow B$ with $|A|=|B|\le\kappa$ satisfying $\etp(A)=\etp(B)$, i.e., $\etp(\abar)=\etp(f(\abar))$ for all $\abar$ from $A$.
\item  For $C\subseteq M$, an {\em $\e$-map inside $C$} is an $\e$-map $f:A\rightarrow B$ with $A\cup B\subseteq C$.  
\item  An {\em $\e$-permutation of $C$} is an $\e$-map $f:C\rightarrow C$.
\end{itemize}
\end{definition}

Note that if $N\models T$, then any $\e$-permutation of $N$ is an automorphism.

\begin{lemma}  \label{twopart}  Let $T$ be universal, $\kappa\ge |T|$, and $M$  a $\kappa^+$-existentially closed model of $T$.
\begin{enumerate}
\item  For every $\e$-map $f:A\rightarrow B$ and every $C\subset M$ with $|C|\le\kappa$, there is $D\subset M$ and an $\e$-map $g:AC\rightarrow BD$ extending $f$.
\item  For every $C\subset M$ with $|C|\le \kappa$ and for every $\e$-map $f:A\rightarrow B$ inside $C$, there is $D$, $C\subset D\subset M$, $|D|\le\kappa$, and
an $\e$-permutation $g:D\rightarrow D$ extending $f$.
\end{enumerate}
\end{lemma}

\begin{proof}  (1)  Fix an enumeration $\cbar$ of $C$ and let
$$\Gamma(\xbar,B):=\{\phi(\xbar',f(\abar)):\phi(\xbar',\abar)\in \etp(\cbar'/A)\}$$
Since $\etp(C/A)$ is finitely satisfied in $M$ and $\etp(A)=\etp(B)$, $\Gamma(\xbar,B)$ is finitely satisfied in $M$, so take $D$ to be any realization of $\Gamma(\xbar,B)$.

(2)  We construct a nested  $\omega$-sequence $(C_n:n\in\omega)$ of subsets of $M$, all of size $\le\kappa$, and a nested sequence $g_n$ of $\e$-maps inside $C_n$ as follows:
Put $C_0=C$ and $g_0=f$.   Given $C_n$ and an $\e$-map $g_n$ inside $C_n$, apply (1) twice for both the domain and range to get
$C_{n+1}\supseteq C_n$ with $|C_{n+1}|=\kappa$. and an $\e$-map $g_{n+1}$ inside $C_{n+1}$ extending $g_n$ with $C_n\subseteq \dom(g_{n+1})\cap\range(g_{n+1})$.
Then $D:=\bigcup C_n$ and $g:=\bigcup g_n$ work.
\end{proof}

\begin{lemma}  \label{auto}  Let $T$ be universal, $\kappa\ge |T|$, and $M$  a $\kappa^+$-existentially closed model of $T$.  For all $A,B,C\subseteq M$ with $|ABC|\le\kappa$
and for every $\e$-map $f:A\rightarrow B$,
there is an $N\prec_\exists M$, $|N|=\kappa$, with $ABC\subset N$ and $\sigma\in Aut(N)$ extending $f$.
\end{lemma}

\begin{proof}  Build a nested sequence $N_0\subset D_0\subset N_1\subset D_1\subset \dots$ and a nested sequence of $\e$-permutations $g_n:D_n\rightarrow D_n$ 
as follows.   By Fact~\ref{insideec}(2), choose an existentially closed model $N_0$
satisfying $ABC\subset N_0\subset M$ with $|N_0|=\kappa$.   Get $D_0$ and an $\e$-permutation $g_0:D_0\rightarrow D_0$ extending $f:A\rightarrow B$
from Lemma~\ref{twopart}(2), taking $C$ there to be $N_0$.  Continuing, given $D_n$ and $g_n:D_n\rightarrow D_n$, apply Fact~\ref{insideec}(2) to get $N_{n+1}$ of size $\kappa$ such that  $D_n\subset N_{n+1}\subset M$.  Then obtain $D_{n+1}$ and an $\e$-permutation $g_{n+1}$ of $D_{n+1}$ from  Lemma~\ref{twopart}(2), taking $C$ to be $N_{n+1}$
and $g_n$ to be the given $\e$-map.  

 To finish, put $N:=\bigcup N_n$.  Then $N$ is an existentially closed model of $T$ by Fact~\ref{ecexists}(2) and $g:=\bigcup g_n$ is an automorphism of $N$ extending $f$.
\end{proof}

Finally, as we will make significant use of finite satisfiability, we review some of its properties in the existentially closed setting.

\begin{lemma} \label{lem:basic}
	Let $M\eprec N$, with $N$ existentially closed.
	\begin{enumerate}
		\item Every maximal existential type $p$ over $M$ is finitely satisfiable in $M$.
		\item  (Non-$\e$-splitting)  If $p$ is a maximal existential type over $B \subset N$ that is finitely satisfied in $M$, then $p$ does not $\e$-split over $M$, i.e., if $\bbar,\bbar'\subseteq B$ and $\etp(\bbar/M)=\etp(\bbar'/M)$,
		then for any $\phi(\xbar,\ybar)$, we have $\phi(\xbar,\bbar)\in p$ if and only if $\phi(\xbar,\bbar')\in p$.
		\item  (Transitivity)  Let $\abar, \bbar, C \subset N$. If $\etp(\bbar/C)$ and $\etp(\abar/\bbar C)$ are both finitely satisfied in $M$, then so is $\etp(\abar\bbar/C)$.
	\end{enumerate}
\end{lemma}
\begin{proof}
	$(1)$ As maximal existential types are closed under conjunctions, we may consider a single formula $\phi(\xbar; \mbar) \in p$, with $\mbar \subset M$. By definition of being existentially closed, there is some $\nbar \in M$ such that $M \models \phi(\nbar; \mbar)$.
	
	$(2)$ Suppose $\phi(\xbar, \bbar) \in p(\xbar)$, but $\phi(\xbar, \bbar') \not\in p(\xbar)$. Then there is $\psi(\xbar, \cbar) \in p(\xbar)$ that is an obstacle for $\phi(\xbar, \bbar')$. By finite satisfiability, there is some $\mbar \in M$ such that $N \models \phi(\mbar, \bbar) \wedge \psi(\mbar, \cbar)$, so $N \models \neg \phi(\mbar, \bbar')$, and so $\etp(\bbar/M) \neq \etp(\bbar'/M)$.
	
	$(3)$ Let $\phi(\xbar, \ybar; \cbar) \in \etp(\abar \bbar/C)$. Since $\etp(\abar/\bbar C)$ is finitely satisfied in $M$, there is $\mbar \subset M$  such that $N \models \phi(\mbar, \bbar; \cbar)$. Then since $\etp(\bbar/C)$ is finitely satisfied in $M$, there is $\mbar' \subset M$ such that $N \models \phi(\mbar, \mbar'; \cbar)$.
\end{proof}

One of the key facts about finitely satisfiable types in the usual setting of complete theories is that they are precisely the average types of ultrafilters. From this it follows that if $p$ is a type over $A$ that is finitely satisfied in $M$, then for any $B \supset A$, there is an extension of $p$ to a type over $B$ that is still finitely satisfied in $M$. However, the average existential type of an ultrafilter need not be maximal existential, and so both of these facts fail in the existentially closed setting of this subsection. Because of this, we will have to leave the setting of existentially closed models for some arguments.

\subsection{Generalized indiscernibles indexed by ordered pairing functions}

Our later results giving consequences of a pre-coding configuration are aided by generalized indiscernibles indexed by the following structure. This is essentially a three-sorted structure, with two sorts consisting of copies of $(\Q, \leq)$ and the third sort consisting of the product $(\Q^2, \leq_{lex})$ with the lexicographic order, equipped with projection functions for each coordinate and a pairing function.

\begin{definition}  Let $L_0=\{I,J,\Gamma,\pi_1,\pi_2,\rho,\le\}$ and let $\P$ denote the countable $L_0$-structure satisfying:
\begin{enumerate}
\item  $I,J,\Gamma$ partition $\P$ into three (disjoint) sorts, each infinite;
\item  $\le$ is a dense linear order without endpoints with $I\ll J\ll \Gamma$ (so that the restriction to each sort is also dense without endpoints);
\item  $\pi_1:\Gamma\rightarrow I$, $\pi_2:\Gamma\rightarrow J$, and $\rho:I\times J\rightarrow \Gamma$;
\item  For $\gamma\in \Gamma$, $\rho(\pi_1(\gamma),\pi_2(\gamma))=\gamma$; and
\item  For $i\in I,j\in J$,  $\pi_1(\rho(i,j))=i$ and $\pi_2(\rho(i,j)=j$.
\item  $\le\mr{\Gamma}$ is $\le_{lex}$, i.e., $\gamma\le\gamma'$ iff 
either $\pi_1(\gamma)<\pi_1(\gamma')$ or [$\pi_1(\gamma)=\pi_1(\gamma')$ and $\pi_2(\gamma)\le\pi_2(\gamma')$.]
\end{enumerate}
\end{definition}

Note that $(I,\le)$ and $(J,\le)$ are each isomorphic to $(\Q,\le)$ and are order-indiscernible sequences in $\P$.  Moreover, 
 the automorphism group $Aut(\P)$ is naturally isomorphic to $Aut(I,\le)\times Aut(J,\le)$.  Indeed, for any $\sigma\in Aut(I,\le)$ and $\tau\in Aut(J,\le)$,
 we obtain an $L_0$-elementary bijection of $\Gamma$ via $\rho(i,j)\mapsto \rho(\sigma(i),\tau(j))$.

Additionally, $\P$ is uniformly locally finite.  As notation, for a finite, non-empty $X\subseteq\P$, let $\<X\>\in Age(\P)$ be the smallest substructure containing $X$.
As $\P$ is totally ordered, for any finite  substructures $A,B\subseteq\P$, there is at most one isomorphism $h:A\rightarrow B$. 
If $\delta_1<\dots<\delta_k$, $\delta_1'<\dots<\delta_k'$ are from $\P$,  then $\qftp^{\P}(\delta_1,\dots,\delta_k)=\qftp^{\P}(\delta_1',\dots,\delta_k')$ if and only if
the substructures $\<\delta_1,\dots,\delta_k\>$ and $\<\delta_1',\dots,\delta_k'\>$ are isomorphic.  

The following lemma demonstrates that $\P$ is a desirable index structure. We refer to the {\em Ramsey property} in the sense of structural Ramsey theory; see, for example, \cite[Definition 3.6]{Scow} for a definition.

\begin{lemma}   \label{ageRamsey} $Age(\P)$ has the Ramsey property and $\P$ is its Fra\"isse limit.
\end{lemma}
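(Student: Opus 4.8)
The plan is to verify that $\mathrm{Age}(\P)$ is a Ramsey class by exhibiting it as (the age of) a Fra\"iss\'e limit obtained from a known Ramsey class via a suitable combination of operations, and then checking that $\P$ itself is the Fra\"iss\'e limit of this age. The key structural observation is that a finite substructure $A \subseteq \P$ is, up to isomorphism, determined by the following data: the finite linear order $A \cap I = \{i_1 < \dots < i_m\}$, the finite linear order $A \cap J = \{j_1 < \dots < j_n\}$, and the subset $S \subseteq (A\cap I) \times (A\cap J)$ consisting of those pairs $(i,j)$ with $\rho(i,j) \in A$ (equivalently, $A \cap \Gamma$ is indexed by $S$ via $\rho$), subject to the closure condition that $A \cap \Gamma = \rho[S]$ and that $\pi_1, \pi_2$ send each $\rho(i,j) \in A$ back into $A \cap I$, $A \cap J$. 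In other words, $\mathrm{Age}(\P)$ is equivalent to the class of finite structures consisting of two disjoint linear orders $(I_0,\le)$, $(J_0,\le)$ together with a bipartite relation (= arbitrary subset of $I_0 \times J_0$) marking which product-pairs are ``present'', where the $\Gamma$-sort with its lexicographic order and the projection functions $\pi_1,\pi_2,\rho$ carry no extra information beyond $S$.

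First I would make this equivalence precise: define a functor between $\mathrm{Age}(\P)$ and the class $\KK$ of finite ``bipartite-marked double linear orders'' just described, and check it is an isomorphism of categories preserving embeddings (this is where conditions (3)--(6) of the definition of $\P$ do their work — they force $\Gamma$ to be a definable copy of a subset of $I \times J$ with a forced order and forced projections). Then I would identify $\KK$ as a Ramsey class: a finite structure in $\KK$ is exactly a finite linearly ordered set $I_0$, a finite linearly ordered set $J_0$, and a subset of $I_0 \times J_0$; equivalently it is an ordered bipartite graph with a fixed two-sided partition. The Ramsey property for ordered bipartite graphs with fixed sides (equivalently, for $\{<\}\cup\{R\}$-structures where $<$ linearly orders each of two named sorts and $R$ is a subset of the product) is standard — it follows from the Nešetřil--Rödl theorem (Ramsey property for the class of finite ordered structures in a finite relational language with all relations ``free''/unconstrained), or can be derived by a direct product-Ramsey argument iterating the classical Ramsey theorem on the two index sets. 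Transporting this back along the functor gives the Ramsey property for $\mathrm{Age}(\P)$.

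Next I would check that $\mathrm{Age}(\P)$ is a Fra\"iss\'e class — it is clearly countable up to isomorphism, hereditary, and has the joint embedding and amalgamation properties (amalgamation is inherited from $\KK$: to amalgamate two marked double linear orders over a common substructure, amalgamate the two linear orders $I$ and the two linear orders $J$ as linear orders over their common parts, then take the union of the two marked subsets of the product — no new $\Gamma$-points are forced except those already demanded by closure, and there are no constraints to violate since any subset of the product is allowed). Finally I would verify that $\P$ is the Fra\"iss\'e limit: $\P$ is countable, and by construction each of $(I,\le)$, $(J,\le)$ is a copy of $(\Q,\le)$, the map $\rho$ is a bijection $I\times J \to \Gamma$, and $\le\mr\Gamma$ is forced to be $\le_{\mathrm{lex}}$; from this one reads off that $\P$ is ultrahomogeneous (any isomorphism between finite substructures extends to an automorphism — by the earlier remark $\mathrm{Aut}(\P) \cong \mathrm{Aut}(I,\le)\times\mathrm{Aut}(J,\le)$, and the homogeneity of $(\Q,\le)$ on each factor suffices) and that its age is exactly $\mathrm{Age}(\P)=\KK$ (every finite marked double linear order embeds into $\P$, realizing a given marked subset of the product by choosing suitable points of $I$, $J$ and taking the corresponding $\rho$-images). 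By uniqueness of the Fra\"iss\'e limit, $\P$ is that limit.

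The main obstacle I anticipate is not any single hard argument but rather getting the bookkeeping of the translation functor exactly right — in particular being careful that the closure condition (a finite $X$ generates $\<X\>$ by adding $\pi_1,\pi_2$-images of its $\Gamma$-points and the $\rho$-images these then demand, and that this process terminates, which it does since $\rho(\pi_1(\gamma),\pi_2(\gamma))=\gamma$ so no genuinely new $\Gamma$-points appear beyond those indexed by pairs of already-present $I$- and $J$-points) so that embeddings in $\mathrm{Age}(\P)$ correspond exactly to embeddings in $\KK$, with no spurious extra points and no loss of the lexicographic-order data. Once the functor is verified to be an equivalence respecting embeddings, the Ramsey property and the Fra\"iss\'e-limit identification both transfer mechanically from the well-understood class $\KK$.
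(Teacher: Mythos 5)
There is a genuine gap, and it comes at the very first step: your description of $Age(\P)$ is wrong. In $L_0$ the symbol $\rho$ is a \emph{function} $I\times J\to\Gamma$, so a substructure $A\subseteq\P$ must be closed under $\rho$ as well as under $\pi_1,\pi_2$. Consequently $A\cap\Gamma$ is forced to be the \emph{full} grid $\rho[(A\cap I)\times(A\cap J)]$ (and conversely, closure under $\pi_1,\pi_2$ together with axiom (4) shows it is no larger). There is no freedom to choose an arbitrary ``marked subset'' $S\subseteq (A\cap I)\times(A\cap J)$: the only datum of a finite substructure, up to isomorphism, is the pair of cardinalities $(|A\cap I|,|A\cap J|)$, since the lexicographic order on the $\Gamma$-part and the projections are all determined by axioms (3)--(6). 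So your class $\KK$ of ordered bipartite graphs with arbitrary edge sets is not $Age(\P)$; its Fra\"iss\'e limit is the generic ordered bipartite graph, not $\P$ (in $\P$ every pair $(i,j)$ has $\rho(i,j)$ present); and your amalgamation argument (``no new $\Gamma$-points are forced'') fails for the same reason, since amalgamating the two linear orders forces new $\rho$-values on cross pairs. Moreover, even setting this aside, the Ramsey property for a larger class does not restrict to a subclass, so Ne\v{s}et\v{r}il--R\"odl for arbitrary ordered bipartite graphs would not by itself give the Ramsey property for the full-grid structures that actually occur in $Age(\P)$.

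The correct identification is the one the paper uses: a finite substructure of $\P$ corresponds exactly to the product of its projections to $(I,\le)$ and $(J,\le)$, and an embedding $A\to C$ corresponds to a pair consisting of an order-embedding $A\cap I\to C\cap I$ and one $A\cap J\to C\cap J$. Under this correspondence a coloring of the copies of $A$ in $C$ is a coloring of $\binom{C\cap I}{|A\cap I|}\times\binom{C\cap J}{|A\cap J|}$, and the Ramsey property is literally an instance of the product Ramsey theorem (your parenthetical alternative), with no need for Ne\v{s}et\v{r}il--R\"odl. Your homogeneity argument via $Aut(\P)\cong Aut(I,\le)\times Aut(J,\le)$ is the same as the paper's and is fine once the age is correctly identified.
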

\begin{proof}
	That $\P$ is homogeneous follows from the natural isomorphism of $Aut(\P)$ with $Aut(I,\le)\times Aut(J,\le)$. Similarly, by associating a finite substructure of $\P$ with the product of its projections to $(I, \le)$ and $(J, \le)$, the Ramsey property becomes an instance of the product Ramsey theorem \cite[\S 5.1, Theorem 5]{GRS}.
\end{proof}
 
As $\P$ is a Fra\"isse limit, any $L_0$-isomorphism of finite substructures of $\P$ extends to an automorphism of $\P$.  

\begin{definition}  Suppose $\LL$ is any language and $M$ is any $\LL$-structure.  A subset $\A\subseteq M$ is {\em $\P$-indiscernible via $f$} if $f:\P\rightarrow M^{<\omega}$ and
$\A=\bigcup f(\P)$ satisfies:
$$\tp_L(f(\delta_1),\dots f(\delta_k))=\tp_L(f(\delta_1'),\dots,f(\delta_k'))$$
for all $k$ and all
$(\delta_1,\dots,\delta_k)$, $(\delta_1',\dots,\delta_k')$ from $\P^k$ satisfying the same quantifier free type in $\P$.

We routinely write $\{\abar_i:i\in I\}$, $\{\bbar_j:j\in J\}$, and $\{\cbar_{i,j}:(i,j)\in I\times J\}$ as the images under $f$.  
We say $\A$ is a {\em $\P$-indiscernible partition} if 
 $f(\delta)$ is without repetition and $f(\delta)$ and $f(\delta')$ are disjoint
for distinct $\delta,\delta'\in\P$.  
\end{definition}

Combining our remarks above, an equivalent of the indiscernibility condition is that for any automorphism $\sigma\in Aut(\P)$, we have that
$\tp_L(f(\delta_1),\dots,f(\delta_k))=\tp_L(f(\sigma(\delta_1)),\dots,f(\sigma(\delta_k)))$ for all $(\delta_1,\dots,\delta_k)\in \P^k$.
Thus, if in addition, $M$ is uncountable and saturated, this condition is also equivalent to:
  For every $\sigma\in Aut(\P)$, the set mapping $f(\delta)\mapsto f(\sigma(\delta))$ extends to
an automorphism $\sigma^*\in Aut(M)$.

We now record some technical remarks that will be useful in later sections. For these remarks, we work in an ambient monster model $\C$ of some fixed complete theory. 

\begin{remark}  \label{addstrips} {\em 
If $\A$ is a $\P$-indiscernible partition then there are constants $m_a,m_b,m_\Gamma$ such that $|f(i)|=m_a$, $|f(j)|=m_b$, and $|f(\gamma)|=m_\Gamma$ for all
$i\in I$, $j\in J$, and $\gamma\in\Gamma$.  Thus, if $\A$ is a  $\P$-indiscernible partition, then $\A$ is partitioned into $m_a+m_b+m_\Gamma$ `strips', indexed by
$I$, $J$, or $I\times J$.  Let $\C^+$ denote the monadic expansion of $\C$ formed by adding unary predicates $U_\ell$ for each of these strips.
Note that because of our equivalent formulation of $\P$-indiscernibility in terms of automorphisms, it follows that $\A$ is also $\P$-indiscernible in the expanded
language $\LL^+$ of $\C^+$. } 
\end{remark}

\begin{remark}  \label{P0}  {\em 
If  $I_0\subseteq I$ and $J_0\subseteq J$ are each infinite convex subsets, then the substructure $\P_0\subseteq \P$
with universe $I_0\cup J_0\cup\{\gamma_{i,j}:i\in I_0,j\in J_0\}$ is isomorphic to $\P$ and every automorphism of $\P_0$ extends to an automorphism of $\P$.
Thus, up to re-indexing, we may view $\A_0:=f[\P_0]$ as also $\P$-indiscernible via $f$.  
Furthermore, every automorphism of $\P_0$ extends to an automorphism of $\P$ fixing
$(I\setminus I_0)\cup(J\setminus J_0)$ pointwise.  In this case $\A_0$ will be $\P$-indiscernible in any expansion of $\C$ formed by naming constants from
$\{\abar_i:i\in I\setminus I_0\}\cup\{\bbar_j:j\in J\setminus J_0\}$.}
\end{remark}

\begin{remark}  \label{duplication}  {\em Suppose $\A$ is a $\P$-indiscernible partition.  As the linear order $(2\times\Q,\le)$ embeds into $(I,\le)$,
choose disjoint, dense $I_0,I_1\subseteq I$ such that for each $i\in I_1$ there is a unique $i^-\in I_0$ that is an immediate predecessor of $i$ in $I_0 \cup I_1$ and dually,
for each $i^-\in I_0$ there is a unique $i\in I_1$ that is an immediate successor in $I_0 \cup I_1$.  For each $i\in I_1$, put $\abar^*_i:=\abar_{i^-}\abar_i$, where $i^-$ is the immediate predecessor
of $i$.  Then $\A'=\{\{\abar^*_i:i\in I_1\},\{\bbar_j:j\in J\},\{\cbar_{i,j}:i\in I_1,j\in J\}\}$ is a $\P_1$-indiscernible partition, where $\P_1$ is indexed by $I_1,J,I_1\times J$.
By symmetry, one can do the same procedure on the $J$-side as well.}
\end{remark}

\section{Configurations defined by low-complexity formulas} \label{sec:precode}

\subsection{Monadic NIP} \label{sec:monNIP}

We now aim to prove Theorem \ref{thm:intro pre}. The first part of the following definition from \cite{Hanf} is the central characterization of monadically NIP theories in \cite{MonNIP}. (We remark that we will often abuse notation and treat tuples as unordered sets, as in the following definition).

\begin{definition}
	A theory $T$ has the {\em f.s.\ dichotomy} if for every $N \models T$, $M \prec N$, $\abar, \bbar \subset N$ such that $\tp(\bbar/M\abar)$ is finitely satisfiable in $M$, and $c \in N$, we have either $\tp(\bbar c/M\abar)$ or $\tp(\bbar/M\abar c)$ is finitely satisfiable in $M$.
	
	A universal theory $T$ has the {\em $\e$-f.s.\ dichotomy} if for every existentially closed $N \models T$, $M \eprec N$, $\abar, \bbar \subset N$ such that $\etp(\bbar/M\abar)$ is finitely satisfiable in $M$, and $c \in N$, we have either $\etp(\bbar c/M\abar)$ or $\etp(\bbar/M\abar c)$ is finitely satisfiable in $M$.
\end{definition}

The f.s.\ dichotomy shows that if we have two independent sets (in the sense of finite satisfiability), then we may extend this by an arbitrary point and maintain independence. Iterating these extensions allows us to produce decompositions of models of a monadically NIP theory into independent pieces as in \cite[Section 3.1]{MonNIP}, and as in Lemma \ref{l:e extend} in the existentially closed setting. Such decompositions have certain controlled type-counting, which will characterize monadic NIP in Proposition \ref{p:blobwidth}. On the other hand, if the f.s.\ dichotomy fails, then there is a point $c$ behaving ``non-generically'' with respect to the independent tuples $\abar$ and $\bbar$. This ``pairing function''-type behavior can be used to produce the pre-coding configuration of Definition \ref{def:pre-code} that is the canonical obstruction to monadic NIP.

Our first subgoal is to show that if $T$ has the $\e$-f.s.\ dichotomy, then $T$ is monadically NIP. We begin by introducing a characterization of monadic NIP considering arbitrary models of a theory. Then we prove a parallel result in the existentially closed setting assuming the $\e$-f.s.\ dichotomy, and finally show how to link them.

\begin{definition}
	Given $M \eprec N$ existentially closed models of a theory $T$ (resp. $M \prec N$ models of $T$), an  {\em $\e$-$M$-f.s.\ sequence} (resp. {\em $M$-f.s.\ sequence}) is a sequence of sets $(A_i \subset N: i \in I)$ (with $I$ linearly ordered) such that $\etp(A_i/MA_{<i})$ (resp. $\tp(A_i/MA_{<i})$) is finitely satisfiable in $M$, for every $i \in I$.
	
	Suppose $X \subseteq N$ is any set. 
	\begin{itemize}
		\item   A {\em partial $\e$-$M$-f.s.\ decomposition of $X$} is an $\e$-$M$-f.s.\ sequence $(A_i:i\in I)$ with $\bigcup_{i\in I} A_i\subseteq X$.
		\item  An {\em $\e$-$M$-f.s.\ decomposition of $X$} is a partial $\e$-$M$-f.s.\ decomposition with $\bigcup_{i\in I}A_i=X$.
	\end{itemize}

\end{definition}

\begin{definition}
	For any structure $N$ and $A,B\subseteq N$, let {\em $\rtp(B,A)$} (resp. {\em $\rtp_{qf}(B,A)$}, $\rtp_{\exists}(B, A)$) denote the number of complete types (resp. quantifier-free types, existential types) over $A$ {\em realized} by tuples in $(B\setminus A)^{<\omega}$.
\end{definition}

\begin{definition}  Let $N$ be a set and let $\I=\set{\abar_i:i\in I}$ be any sequence of pairwise disjoint tuples in $N$.
	An {\em $\II$-partition of $N$} is any partition $N=\bigsqcup \set{A_i:i\in I}$ such that $\abar_i\subseteq A_i$ for each $i\in I$.
\end{definition}

We will ultimately use the following characterization of monadic NIP. Its crucial feature is that it is in terms of quantifier-free types.

\begin{proposition} \label{p:blobwidth}
	A theory $T$ is monadically NIP if and only if there is a cardinal $\lambda(T)$ such that for every $N \models T$ and every indiscernible sequence $\II = (\abar_i : i \in I)$ (equivalently, every indiscernible sequence of singletons $\II = (a_i : i \in I))$  in $N$ such that $I$ is a well-ordering with a maximum element, there is an $\II$-partition $(A_i : i \in I)$ of $N$ such that $\rtp_{qf}(N, A_{<i}) \leq \lambda(T)$  for every $i \in I$ (equivalently, $\rtp(N, A_{<i}) \leq \lambda(T)$  for every $i \in I$).
	
	Furthermore, for both $\rtp_{qf}$ and $\rtp$, we may take $\lambda(T) = 2^{|T|}$.
\end{proposition}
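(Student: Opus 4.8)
\textbf{Proof plan for Proposition \ref{p:blobwidth}.}

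The plan is to derive this from the known characterizations of monadic NIP in \cite{MonNIP}, which are phrased in terms of $\rtp$ (complete types) along f.s. decompositions, and then to upgrade from $\rtp$ to $\rtp_{qf}$ essentially for free. Recall that the main theorem of \cite{MonNIP} states that $T$ is monadically NIP if and only if there is a bound $\lambda$ such that every model $N$ admits an $M$-f.s. decomposition of $N$ into pieces $(A_i : i \in I)$, for a suitable $M \prec N$, with $\rtp(N, A_{<i}) \le \lambda$ for each $i$; moreover one may take $\lambda = 2^{|T|}$. So one direction of the equivalence is already in hand once we explain how an indiscernible sequence $\II$ with a maximum gives rise to such a decomposition, and the other direction is almost immediate since if $\rtp(N, A_{<i}) \le \lambda$ then a fortiori $\rtp_{qf}(N, A_{<i}) \le \lambda$, so the $\rtp_{qf}$ version is formally weaker and suffices to witness failure of the bound.

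First I would establish the equivalence of the $\rtp_{qf}$ and $\rtp$ formulations. The nontrivial direction is that a uniform bound on $\rtp_{qf}(N, A_{<i})$ forces a uniform bound on $\rtp(N, A_{<i})$; this is where monadic NIP is genuinely used, and it should follow from the standard fact that in a monadically NIP theory, over a model (or over a set realized as an initial segment of an f.s. decomposition), complete types are controlled by quantifier-free types up to a bounded ambiguity — concretely, after naming a bounded set of parameters the relevant complete types become quantifier-free definable. Alternatively, one can sidestep this by noting that we only need the $\rtp_{qf}$ statement to \emph{imply} monadic NIP and the $\rtp$ statement to \emph{follow} from it, so the ``equivalently'' clause can be read as: both give characterizations of the same class, with the $\rtp$ bound being the a priori stronger conclusion and the $\rtp_{qf}$ bound the a priori stronger hypothesis.

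Next I would handle the shape of the index sequence. The characterization in \cite{MonNIP} produces an f.s. decomposition with respect to \emph{some} model $M$ and \emph{some} well-ordered index; here we are handed an arbitrary indiscernible sequence $\II = (\abar_i : i \in I)$ with $I$ a well-order with a top element, and we must produce an $\II$-partition, i.e., one respecting the given tuples. The point of requiring a maximum element is that the tail tuple $\abar_{\max}$, together with enough of the sequence, can play the role of (or sit inside) the base model $M$: by indiscernibility and compactness the sequence can be stretched, a model $M$ containing a cofinal-below-$\max$ chunk can be found with the $\abar_i$ forming an $M$-f.s. sequence, and then the \cite{MonNIP} decomposition of $N$ relative to $M$ can be coarsened/merged with the partition into the strips $A_i \ni \abar_i$ so that the $\rtp$ (hence $\rtp_{qf}$) bound is preserved along initial segments $A_{<i}$.

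The main obstacle I anticipate is precisely this last reconciliation step: massaging the f.s. decomposition supplied by \cite{MonNIP} into one that is simultaneously an $\II$-partition and still has bounded $\rtp_{qf}$ (not merely bounded $\rtp$) along \emph{every} initial segment indexed by the given well-order $I$. One has to be careful that merging pieces of the two decompositions does not blow up the number of realized types — this uses that a union of two bounded f.s. sequences is again f.s. (Fact \ref{f:basic}(3) in the existentially closed setting; its classical analogue here) and that $\rtp$ is subadditive under such merges up to the bound $2^{|T|}$. The role of the maximum element of $I$ must be used exactly once, to guarantee the decomposition can be ``closed off at the top'' into an honest model, and I would make sure that step is isolated cleanly.
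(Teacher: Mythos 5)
Your forward direction is essentially the paper's, though phrased in a needlessly complicated way. The paper invokes the f.s.\ dichotomy from \cite{MonNIP}, obtains (as in \cite[Lemma 4.4]{MonNIP}) a model $M$ of size $|T|$ over which $\II$ is an $M$-f.s.\ sequence, and then extends this \emph{partial} $M$-f.s.\ decomposition directly to an $\II$-partition of $N$ that is itself an $M$-f.s.\ sequence; the bound $2^{|T|}$ comes from counting types finitely satisfiable in $M$ under NIP. There is no second decomposition to be ``coarsened/merged'' with the strips $A_i \supseteq \abar_i$, so the reconciliation step you flag as the main obstacle does not arise. Your circle-of-implications reading of the ``equivalently'' clause (prove mon.\ NIP $\Rightarrow$ $\rtp$-bound $\Rightarrow$ $\rtp_{qf}$-bound $\Rightarrow$ mon.\ NIP) is the correct logical architecture and is what the paper does; your alternative suggestion that complete types become quantifier-free definable after naming bounded parameters is not a fact the paper uses and is not needed.

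The genuine gap is the backward direction, which you declare ``almost immediate'' on the strength of $\rtp_{qf} \le \rtp$. That inequality points the wrong way for this purpose: to refute the hypothesis you must exhibit, for every $\lambda$, some $N$ and $\II$ such that \emph{every} $\II$-partition realizes at least $\lambda$ \emph{quantifier-free} types over some initial segment, and producing many complete types does not a fortiori produce many quantifier-free ones. The paper's proof of this direction has three substantive ingredients, none of which appear in your proposal: (i) \cite[Lemma 4.7]{MonNIP}, reducing the failure of monadic NIP to the presence of IP itself; (ii) \cite[Lemma 4.6]{MonNIP}, which converts a lower bound on $\rtp$ into one on $\rtp_{qf}$ --- this is how the two versions are actually reconciled; and (iii) the construction itself: a formula $\phi(x,y)$ on singletons witnessing IP, an indiscernible sequence $(a_i : i \le \lambda)$ shattered by witnesses $\{b_s : s \in 2^\lambda\}$, and a cofinality/pigeonhole argument showing that whichever piece $A_{i^*}$ absorbs $2^\lambda$ of the witnesses, either those witnesses already realize $\lambda$ many $\phi$-types over $A_{<i^*}$, or else $\lambda$ many later elements of $\II$ realize distinct types over the witnesses inside $A_{i^*}$. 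Without some version of these steps the right-to-left implication is unproved.
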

\begin{proof}
	$(\Ra)$ If $T$ is monadically NIP, then it has the f.s.\ dichotomy by \cite[Theorem 1.1]{MonNIP}. As in the proof of \cite[Lemma 4.4]{MonNIP}, we may find a model $M$ of size $|T|$ and an $\II$-partition $(A_i : i \in I)$ of $N$ that is also an $M$-f.s.\ sequence. Then every type realized in $A_{\geq i}$ over $A_{< i}$ is finitely satisfiable in $M$. The general bound on the number of global types finitely satisfiable in $M$ is $\beth_2(|T|)$, but \cite[Proposition 2.43]{Guide} improves this to $2^{|T|}$ under the assumption of NIP.

    $(\La)$ We prove the contrapositive. We wish to show that if $T$ is not monadically NIP, then we may find $\II$ (consisting of singletons) and $N$ such that  for every $\II$-partition $(A_i : i \in I)$ of $N$, there is an $i \in I$ with $\rtp_{qf}(N, A_{<_i})$ arbitrarily large. By \cite[Lemma 4.7]{MonNIP}, if this is true in some expansion of $T$ by finitely many unary predicates, then it is true in $T$ as well. Thus, up to replacing $T$ by some unary expansion with IP, we may assume $T$ has IP, rather than merely not being monadically NIP. By \cite[Lemma 4.6]{MonNIP},  if $\rtp(N, A_{<_i})$ can be made arbitrarily large, then so can $\rtp_{qf}(N, A_{<_i})$. 

    Fix a cardinal $\lambda$. From the above, we now assume $T$ has IP, and wish to find $\II$ (consisting of singletons) and $N$ such that  for every $\II$-partition $(A_i : i \in I)$ of $N$, there is an $i \in I$ with $\rtp(N, A_{<_i}) \geq \lambda$.
	
	By \cite{SimonNote} there is a formula (with parameters)  $\phi(x, y)$  on singletons witnessing IP. Let $N \models T$ contain an indiscernible sequence $\II = (a_i : i \leq \lambda)$ shattered by $\phi$, i.e. there is a set $Y = \set{b_s : s \in 2^\lambda} \subset N$ such that $N \models \phi(b_s, a_i) \iff i \in s$. Consider an $\II$-partition $(A_i : i \leq \lambda)$ of $N$. By pigeonhole and the fact that the cofinality of $2^\lambda$ is greater than $\lambda$, there is some $i^* \in I$ such that $A_{i^*}$ contains $Y' \subseteq Y$ of size $2^\lambda$.
	
	Partition $Y'$ according to the $\phi$-type of each element over $\II_{\leq a_{i^*}}$. If this partition has at least $\lambda$ classes then so does the partition using $\phi$-types over $\II_{< a_{i^*}}$, giving $\rtp(A_{i^*}, A_{<i^*}) \geq \lambda$. So suppose it has fewer classes. Then by pigeonhole there is some  $Y^* \subseteq Y'$ such that $|Y^*| = 2^\lambda$ and $\tp_\phi(b/\II_{\leq a_{i^*}})$ is constant among $b \in Y^*$. Thus $\tp_\phi(b/\II_{> a_{i^*}}) \neq \tp_\phi(b'/\II_{> a_{i^*}})$ for distinct $b, b' \in Y^*$. So there is $\II^* \subseteq \II_{> a_{i^*}}$ such that $|\II^*| = \lambda$ and $\tp_{\phi^{opp}}(a/Y^*) \neq \tp_{\phi^{opp}}(a'/Y^*)$ for distinct $a, a' \in \II^*$. Thus $\rtp(A_{>i^*}, A_{i^*}) \geq \lambda$, and so $\rtp(N, A_{<i^*+1}) \geq \lambda$, as desired.
\end{proof}

We remark that in most cases, we expect Proposition \ref{p:blobwidth} to be the simplest condition to use when showing that a given structure is monadically NIP, as it only requires working with quantifier-free types and indiscernible sequences of singletons. Furthermore, one may even try to work over sequences that are merely quantifier-free indiscernible (although then it may only be a sufficient condition).

We will now work toward proving the forward direction of the characterization above in the existentially closed setting, assuming the $\e$-f.s.\ dichotomy.

\begin{lemma} \label{l:e extend}
	Let $T$ be a universal theory. If $T$ has the  $\e$-f.s.\ dichotomy, then for every existentially closed $N \models T$ and $M \eprec N$, every partial $\e$-$M$-f.s.\ decomposition of $X \subset N$ can be extended to an  $\e$-$M$-f.s.\ decomposition of $X$.
\end{lemma}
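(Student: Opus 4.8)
The plan is to prove this by a Zorn's lemma argument on the poset of partial $\e$-$M$-f.s.\ decompositions of $X$ that extend the given one, ordered by end-extension (i.e., $(A_i : i \in I) \leq (A'_i : i \in I')$ if $I$ is an initial segment of $I'$ and the sequences agree on $I$). The union of a chain of such decompositions is again a partial $\e$-$M$-f.s.\ decomposition — here one uses that finite satisfiability of $\etp(A_i / M A_{<i})$ only involves finitely many formulas at a time, so it is preserved under unions of the index chain — hence Zorn's lemma yields a maximal element $(A_i : i \in I)$. It then suffices to show that if $\bigcup_{i \in I} A_i = Y \subsetneq X$, we can add one more set $A_{\max}$ to the sequence with $\etp(A_{\max} / M Y)$ finitely satisfiable in $M$ and $A_{\max} \neq \emptyset$, contradicting maximality; in fact it suffices to enlarge $Y$ by a single new element.

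So fix $c \in X \setminus Y$; I want to find a nonempty $A \subseteq X \setminus Y$ with $c \in A$ and $\etp(A / M Y)$ finitely satisfiable in $M$. The idea is to build $A = \{c\} \cup A'$ greedily: start with $A_0 = \{c\}$ and ask whether $\etp(c / M Y)$ is finitely satisfiable in $M$. If yes, we are done with $A = \{c\}$. If not, this is where the $\e$-f.s.\ dichotomy enters: we need to have arranged that $\etp(\text{something}/M Y)$ is finitely satisfiable and then absorb $c$. The cleaner route is to reverse the roles — rather than trying to extend $Y$ by $c$, reorganize so that $c$ lands in a new piece together with whatever part of $Y$ is "responsible" for $c$ failing to be finitely satisfiable over the rest. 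Concretely, enumerate $Y = \{y_\alpha : \alpha < \mu\}$ and recursively peel off: the $\e$-f.s.\ dichotomy says that for $\bbar = Y$, $\abar = \emptyset$ (with $\etp(Y/M)$ finitely satisfiable in $M$ — which holds by Fact \ref{f:basic}(1) since $M \eprec N$ and $N$ is existentially closed), and the new element $c$, either $\etp(Yc/M)$ or $\etp(Y/Mc)$ is finitely satisfiable in $M$. In the first case $A = \{c\}$ works (after noting $\etp(c/MY)$ is finitely satisfiable, by a finite-satisfiability bookkeeping argument — actually we get $\etp(Yc/M)$ f.s., which does not immediately give $\etp(c/MY)$ f.s.). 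This subtlety shows the greedy direction must instead grow the \emph{last} piece.

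The correct organization, which I expect to be the main obstacle to get exactly right, is the following. Since $(A_i : i \in I)$ is an $\e$-$M$-f.s.\ sequence, transitivity (Fact \ref{f:basic}(3)) gives that $\etp(Y/M)$ is finitely satisfiable in $M$, and more generally $\etp(A_{\geq i}/M A_{<i})$ is finitely satisfiable in $M$ for the reversed-order reading. Now apply the $\e$-f.s.\ dichotomy with the roles $M \eprec N$, $\abar = A_{<i}$, $\bbar = A_{\geq i}$ (for a suitable index), and the new singleton $c$: working from the top of the index set $I$ downward, one finds the least $i$ such that $\etp(A_{\geq i}\, c / M A_{<i})$ is finitely satisfiable in $M$; the dichotomy guarantees that at each stage either this holds (and we stop) or $\etp(A_{\geq i}/M A_{<i}\, c)$ holds so we push $c$ further down. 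If we stop at $i$, we replace the tail $(A_j : j \geq i)$ by the single set $A' = \bigcup_{j \geq i} A_j \cup \{c\}$, appended after $A_{<i}$; this is still an $\e$-$M$-f.s.\ decomposition (of $Y \cup \{c\}$), strictly larger than the original, contradiction — unless the process runs all the way down past the minimum of $I$, in which case $\etp(Y/Mc)$ is finitely satisfiable in $M$, and then $(A_i : i \in I)$ followed by $(\{c\})$... no — here we instead prepend, but $I$ may have no room below. The resolution is that $I$ is an abstract linear order with no minimality assumed, but since we only ever need to add finitely much structure we can always extend $I$ downward; alternatively, and more robustly, one shows by induction on $|X \setminus Y|$ that the "running down" case cannot persist, because $\etp(Y/Mc)$ finitely satisfiable in $M$ together with $M \eprec N$ forces (via Fact \ref{f:basic}, non-$\e$-splitting) that $c$ realizes over $M$ a type already realized in $M$, and then $\{c\}$ itself forms a legitimate new final piece with $\etp(c/MY)$ finitely satisfiable in $M$. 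Pinning down this last step — converting "$\etp(Y/Mc)$ f.s.\ in $M$" into "$\etp(c/MY)$ f.s.\ in $M$" — is the crux, and I expect it to follow from a symmetric reading of finite satisfiability via the fact that, in the existentially closed setting, $M \eprec N$ means every existential formula over $M$ true of a tuple in $N$ is witnessed in $M$; combined with an ultrafilter/average-type argument restricted to the relevant finite sets of formulas, this yields the desired $A$ and completes the contradiction.
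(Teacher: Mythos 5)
Your overall strategy (Zorn plus a single-element insertion driven by the $\e$-f.s.\ dichotomy, pushing $c$ downward through the sequence) is the same one underlying the paper's proof, which simply ports \cite[Lemma 3.2]{MonNIP} to existential types. But your write-up has a genuine gap at the step you yourself flag as the crux. In the terminal case (the push-down reaches the bottom of $I$), you propose to convert ``$\etp(Y/Mc)$ is finitely satisfiable in $M$'' into ``$\etp(c/MY)$ is finitely satisfiable in $M$'' via a ``symmetric reading of finite satisfiability'' and an ``ultrafilter/average-type argument.'' Finite satisfiability in $M$ is not symmetric in this way --- $\etp(\abar\bbar/C)$ or $\etp(\bbar/C\abar)$ being f.s.\ does not yield $\etp(\abar/C\bbar)$ f.s.\ --- and the ultrafilter machinery is exactly what the paper warns (end of Section \ref{sec:ec}) is unavailable for existential types. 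The step is also unnecessary: in the terminal case you have $\etp(A_i/MA_{<i}c)$ f.s.\ for every $i$, and $\etp(c/M)$ is f.s.\ in $M$ by Fact \ref{f:basic}(1), so you simply prepend $\{c\}$ as a new \emph{first} piece (extending the index set downward is harmless). You mention this and then abandon it for the false symmetry.

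Two further points need repair. First, applying the dichotomy to the whole tail $\bbar = A_{\geq i}$ and, on stopping, merging that tail into a single piece $A_{\geq i}\cup\{c\}$ does not recover the piecewise conditions $\etp(A_j/MA_{<j}c)$ f.s.\ for $j>i$ (joint finite satisfiability does not imply conditional finite satisfiability when variables move to the parameter side), so the non-merged sequence is not shown to be f.s.; and the merged version destroys the original pieces, which is incompatible both with your own Zorn ordering by end-extension and with the way the lemma is used in Lemma \ref{l:e4.4}, where the extension must respect the given $\II$-indexing. The correct insertion either adds $c$ to a single existing piece $A_{i^*}$ (using transitivity, Fact \ref{f:basic}(3), to combine $\etp(c/MA_{<i^*})$ f.s.\ with $\etp(A_{i^*}/MA_{<i^*}c)$ f.s.) or inserts $\{c\}$ as a new singleton piece at a cut. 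Second, ``working from the top downward to find the least such $i$'' is not a well-defined procedure for an arbitrary linear order $I$; you must instead define the cut directly (e.g.\ $I_2:=\{i: \etp(A_j/MA_{<j}c)\hbox{ is f.s.\ in }M\hbox{ for all }j\ge i\}$) and verify the two insertion conditions at that cut, which is where the real work of the cited proof lies.
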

\begin{proof}
	As in \cite[Lemma 3.2, Lemma 3.5]{MonNIP}, replacing complete types in $\C$ with existential types in $N$.
\end{proof}

\begin{definition}
	A sequence of tuples in a structure is {\em $\e$-indiscernible} if it is indiscernible with respect to all existential formulas.
	\end{definition}

The following Lemmas show that the basic results about indiscernibles and $M$-f.s.\ sequences carry over to $\e$-indiscernibles and $\e$-$M$-f.s.\ sequences.
We quote a   standard result for producing indiscernibles, based on the Erd\H{o}s-Rado theorem. See \cite[Proposition 1.6]{Cas} for a proof.

\begin{fact} \label{f:ER}
	Let $T$ be a complete theory, $\C$ a monster model for $T$, and $A \subset \C$. If $\kappa \geq |T| + |A|$ and $\lambda = \beth_{(2^{\kappa})^+}$, then for any sequence $(\abar_i : i < \lambda)$, there is an $A$-indiscernible sequence $(\bbar_i : i < \omega)$ such that for each $n < \omega$, there are $i_0, \dots, i_n < \lambda$ such that $\tp(\bbar_0, \dots, \bbar_n/A) = \tp(\abar_{i_0} \dots, \abar_{i_n}/A)$.
\end{fact}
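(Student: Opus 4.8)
The statement to prove is Fact~\ref{f:ER}, the standard Erd\H{o}s--Rado production of indiscernible sequences. The plan is to follow the classical argument: one does not need any properties specific to $T$ beyond compactness, so the proof is purely combinatorial once we set up the right colouring. First I would fix the monster model $\C$ and the parameter set $A$ with $\kappa \geq |T| + |A|$, and recall that the number of complete types over $A$ in any fixed finite number of variables is at most $2^\kappa$. Given the sequence $(\abar_i : i < \lambda)$ with $\lambda = \beth_{(2^\kappa)^+}$, define, for each $n < \omega$, a colouring $c_n$ of the $n$-element subsets $\{i_0 < \dots < i_{n-1}\}$ of $\lambda$ by $c_n(\{i_0,\dots,i_{n-1}\}) = \tp(\abar_{i_0},\dots,\abar_{i_{n-1}}/A)$; the total number of colours used, summing over all $n$, is at most $2^\kappa$.

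Next I would invoke the Erd\H{o}s--Rado theorem in the form $\beth_{(2^\kappa)^+} \to (\kappa^+)^n_{2^\kappa}$ for every finite $n$: since $\lambda = \beth_{(2^\kappa)^+}$, for each $n$ we get a subset of size $\kappa^+$ that is homogeneous for $c_n$. Taking a decreasing $\omega$-sequence of such sets (or, more cleanly, applying the standard refinement argument that produces a single set $W \subseteq \lambda$ of size $\kappa^+$ simultaneously homogeneous for all $c_n$, $n<\omega$ — this uses that there are only countably many colourings and $\mathrm{cf}(\kappa^+) = \kappa^+ > \omega$), we obtain an infinite (indeed size-$\kappa^+$) set $W$ on which, for every $n$, the type $\tp(\abar_{i_0},\dots,\abar_{i_{n-1}}/A)$ depends only on $n$ and not on the choice of $i_0 < \dots < i_{n-1}$ in $W$. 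Enumerating the first $\omega$ elements of $W$ as $j_0 < j_1 < \dots$ and setting $\bbar_k := \abar_{j_k}$ already gives an $A$-indiscernible sequence; but to satisfy the literal statement (which wants a sequence $(\bbar_i : i<\omega)$ whose finite subtypes are \emph{realized} by finite subtuples of the original sequence with the right type, rather than literally being a subsequence) one can either just take $\bbar_i = \abar_{j_i}$ directly — which visibly works — or, if an abstract sequence is preferred, use compactness to pull back the common finite types to a fresh $A$-indiscernible sequence. The witnessing indices $i_0,\dots,i_n$ demanded in the statement are then just $j_0,\dots,j_n \in W$.

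The step requiring the most care is the passage from ``homogeneous for each $c_n$ separately'' to ``a single $W$ homogeneous for all $c_n$ simultaneously'': the naive approach of iterating Erd\H{o}s--Rado $\omega$ times would shrink $\lambda$ down through $\omega$ stages, and one must check the cardinal arithmetic leaves enough room. The clean fix is that a single application of $\beth_{(2^\kappa)^+} \to (\kappa^+)^{<\omega}_{2^\kappa}$ — the ``finite exponent'' polarized form of Erd\H{o}s--Rado, which follows from the displayed single-exponent version by a standard diagonal argument using $\mathrm{cf}(\kappa^+)>\omega$ — yields $W$ of size $\kappa^+ \geq \omega$ in one shot. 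Since this is exactly the content cited from \cite[Proposition 1.6]{Cas}, I would not reproduce the full diagonalization; instead I would set up the colourings as above, cite that proposition for the existence of the simultaneously-homogeneous $W$, and then read off the conclusion. No honest difficulty remains beyond bookkeeping; the only genuine subtlety is making sure the ``realized by tuples from the original sequence'' clause is met, which as noted is automatic if we keep $\bbar_i = \abar_{j_i}$.
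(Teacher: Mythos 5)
There is a genuine gap, and it sits at the central combinatorial step. You claim that a single application of the ``finite exponent'' relation $\beth_{(2^{\kappa})^+} \to (\kappa^+)^{<\omega}_{2^{\kappa}}$, allegedly derivable from the usual Erd\H{o}s--Rado theorem ``by a standard diagonal argument using $\mathrm{cf}(\kappa^+)>\omega$'', produces one set $W$ simultaneously homogeneous for all the colourings $c_n$. No such relation holds: already $\lambda \to (\aleph_0)^{<\omega}_2$ is the defining property of the Erd\H{o}s cardinal $\kappa(\omega)$, which is inaccessible and far beyond ZFC, whereas $\beth_{(2^{\kappa})^+}$ is a singular cardinal of cofinality $(2^{\kappa})^+$. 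The diagonal argument fails because the homogeneous sets for different exponents $n$ are not nested: to refine a $c_n$-homogeneous set $W_n$ into a $c_{n+1}$-homogeneous subset you must apply Erd\H{o}s--Rado \emph{inside} $W_n$, which has size only about $(2^{\kappa})^+$, far too small for exponent $n+1$ with $2^{\kappa}$ colours. For the same reason your primary construction $\bbar_i := \abar_{j_i}$ (an indiscernible \emph{subsequence} of the original sequence) is in general impossible; that is precisely why the Fact is stated with the witnesses $i_0,\dots,i_n$ allowed to depend on $n$ and with $(\bbar_i)$ found elsewhere in the monster.

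The correct argument applies Erd\H{o}s--Rado once \emph{for each} $n$ to produce, for each $n$ separately, a complete type $p_n(x_0,\dots,x_{n-1})$ over $A$ realized by some increasing $n$-tuple of the $\abar_i$'s, and the real work is arranging that these types cohere (each $p_n$ restricted to any increasing subtuple of its variables agrees with the corresponding $p_m$); one then realizes $\bigcup_n p_n$ in $\C$ by compactness and saturation to obtain $(\bbar_i : i<\omega)$. That coherence induction is exactly what \cite[Proposition~1.6]{Cas} supplies and what your write-up omits. The paper itself offers no proof --- it records the statement as a known Fact with that citation --- so if you intend to cite rather than prove, you should cite the full statement of the proposition rather than a partition relation that is false.
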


\begin{lemma} \label{l:ind ext}
	Let $T$ be a universal theory. Let $M \models T$ and let $\II = (\abar_i : i \in I)$ be an indiscernible  sequence (for complete types) in $M$. Then there is some $N \supset  M$ such that $N$ is an existentially closed model of $T$ and such that $\II$ is $\e$-indiscernible in $N$.
\end{lemma}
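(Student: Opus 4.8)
The plan is to replace $\II$ by an $\e$-indiscernible sequence whose existential ``Ehrenfeucht--Mostowski type'' is \emph{maximal relative to $T$} — every existential formula either lies in it or is blocked by an obstacle also lying in it — and then to observe that such a sequence remains $\e$-indiscernible under any existentially closed extension, precisely because obstacles are existential formulas and hence persist upward. Concretely: assume $I$ is infinite (treating the finite case separately), and by compactness \emph{stretch} $\II$ to an indiscernible sequence $\II^+ = (\abar_i : i \in I^+)$ in some $M^+ \succ M$, with $I$ an initial segment of $I^+$, $\II^+\restriction I = \II$, and $|I^+| \geq \beth_{(2^{|T|})^+}$. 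Pass to an existentially closed $N^+ \supset M^+$ and view it as an elementary submodel of a monster model $\C$ of the complete theory $Th(N^+)$. By Fact~\ref{f:ER} applied to $\II^+$ inside $\C$ (with $A = \emptyset$) there is a $Th(N^+)$-indiscernible sequence $\JJ = (\bbar_k : k < \omega)$ such that for each $n$ one has $\tp^{\C}(\bbar_0,\dots,\bbar_n) = \tp^{\C}(\abar_{i_0},\dots,\abar_{i_n})$ for suitable $i_0 < \dots < i_n$ in $I^+$. Let $\Phi$ be the set of existential formulas $\phi$ with $\C \models \phi(\bbar_0,\dots,\bbar_{n-1})$.

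The point is that $\Phi$ has two properties: $(a)$ it contains every existential formula satisfied by the increasing tuples of $\II$ in $M$; and $(b)$ it is maximal relative to $T$, i.e.\ every existential $\phi \notin \Phi$ has an obstacle $\psi \in \Phi$. Property $(a)$ is clear, since such a $\phi$ is existential and so passes from $M$ up to every increasing tuple of $\II^+$ in $M^+$ (by indiscernibility), hence to $N^+ \subseteq \C$, hence to $\bbar_0,\dots,\bbar_{n-1}$ by the displayed type equality. For $(b)$: if $\phi \notin \Phi$ then $\C \models \neg\phi(\abar_{i_0},\dots,\abar_{i_{n-1}})$ for the tuple supplied by Fact~\ref{f:ER}, so $N^+ \models \neg\phi(\abar_{i_0},\dots,\abar_{i_{n-1}})$; since $N^+$ is existentially closed, $\etp^{N^+}(\abar_{i_0},\dots,\abar_{i_{n-1}})$ is a maximal existential type and hence contains an obstacle $\psi$ to $\phi$; then $\C \models \psi(\abar_{i_0},\dots,\abar_{i_{n-1}})$, so $\C \models \psi(\bbar_0,\dots,\bbar_{n-1})$, and indiscernibility of $\JJ$ gives $\psi \in \Phi$.

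To finish I would stretch $\JJ$ back up: by compactness there is $N_0 \models Th(N^+)$ (hence $N_0 \models T$) carrying a $Th(N^+)$-indiscernible — in particular $\e$-indiscernible — sequence $\II' = (\abar'_i : i \in I)$ whose existential EM-type over $\emptyset$ is exactly $\Phi$; it contains $\Phi$, and by maximality of $\Phi$ relative to $T$ no consistent existential EM-type properly contains $\Phi$. Using $(a)$ and compactness, $M$ embeds into some $N_0' \succ N_0$ carrying $\II$ onto $\II'$; after identification we may assume $M \subseteq N_0'$ with $\II = \II'$, still $\e$-indiscernible with EM-type $\Phi$. Finally let $N \supset N_0'$ be any existentially closed model of $T$ (which exists by the extension property for e.c.\ models). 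Then $M \subseteq N$, and $\II$ is $\e$-indiscernible in $N$: for existential $\phi$ and increasing tuples $\delta, \delta'$, if $\phi \in \Phi$ then $\phi$ holds of $\abar_\delta$ and $\abar_{\delta'}$ already in $N_0'$ and hence in $N$; if $\phi \notin \Phi$ then its obstacle $\psi \in \Phi$ holds of both in $N_0'$ and hence (being existential) in $N$, forcing $\neg\phi$ of both in $N$ because $N \models T$. Either way $\phi$ has the same truth value on $\abar_\delta$ and $\abar_{\delta'}$ in $N$.

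The step I expect to be the main obstacle is $(b)$ — that the sequence extracted via Erd\H{o}s--Rado has EM-type maximal relative to $T$ — since this is exactly what reconciles existential closedness with $\e$-indiscernibility, and it works only because the obstacles witnessing maximality are themselves existential and therefore survive the final passage to an existentially closed model. Two smaller points to get right are the cardinal bookkeeping (the length required in Fact~\ref{f:ER} depends only on $|Th(N^+)| = |T|$, not on $|N^+|$, so there is no circularity) and the degenerate case where $I$ is finite, which should be disposed of directly since then a finite indiscernible sequence need not stretch to an infinite one.
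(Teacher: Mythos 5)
Your proof is correct, and it runs on the same engine as the paper's: extract an indiscernible sequence by Erd\H{o}s--Rado after first passing to an existentially closed extension, so that the existential types of tuples from the original sequence are already maximal and the extracted sequence inherits a ``complete'' existential EM-type that cannot grow under a further e.c.\ extension. The packaging differs in two ways worth noting. First, where you isolate the existential EM-type $\Phi$ and verify its maximality relative to $T$ explicitly via obstacles (your step $(b)$), the paper reaches the same conclusion more implicitly: it expands $N_1$ by a unary predicate naming $M_1$, extracts indiscernibles in the theory of the pair, and then simply observes that the $\e$-type of each tuple of the extracted sequence is already maximal existential and hence unchanged in the final e.c.\ extension. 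Second, to get $M$ back underneath the new sequence, the paper arranges (via the pair construction and an $|M|^+$-saturated $M^*$) an \emph{elementary} embedding $f\colon M\to M^*$ with $f(\II)=\JJ$ and pulls back along $f$; you instead re-embed $M$ by a compactness argument on its atomic diagram, using your property $(a)$ that every existential formula true of increasing tuples of $\II$ in $M$ lies in $\Phi$ — this is enough because the lemma only asks for $M\subseteq N$, not $M\prec N$, and it is what lets you avoid the unary predicate and the saturation step entirely. Both treatments silently assume $I$ is infinite (a finite indiscernible sequence need not stretch), so your flagging of that degenerate case is reasonable rather than a defect; all of the paper's applications are to infinite index sets.
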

\begin{proof}
	We first note it suffices to find some $|M|^+$-saturated $M^* \succ M$ containing an indiscernible sequence $\JJ$ of the same order type as $\II$ and with $\tp(\JJ) = \tp(\II)$, and to find an existentially closed $N^* \supset M^*$ such that $N^* \models T$ and $\JJ$ is $\e$-indiscernible in $N^*$. For then we may find an elementary embedding $f \colon M \to M^*$ with $f(\II) = \JJ$, which restricts to an isomorphism between $M$ and $f(M)$. Since  $N^*$ is a suitable extension of $f(M)$, we may find a suitable $N \supset M$.
	
	So we now aim to find $M^*, \JJ$, and $N^*$ as above. Let $\mu := 2^{|Th(M)|}$. Choose $M_1 \succ M$ containing $\II_1$ of order type $\beth_{\mu^+}$ and with $\tp(\II_1) = \tp(\II)$. Let $N_1 \supset M_1$ be an existentially closed model of $T$. We now expand $N_1$ by a unary predicate naming $M_1$, and consider the theory of the pair $(N_1, M_1)$.
	
	By Fact \ref{f:ER}, there is $(N_2, M_2) \succ (N_1, M_1)$ containing an indiscernible sequence $\JJ_0 = (\bbar_i : i \in \omega)$ such that for every $n \geq 1$, $\tp_{(N_2, M_2)}(\bbar_1, \dots, \bbar_n) = \tp_{(N_1, M_1)}(\abar_{i_1}, \dots, \abar_{i_n})$ for some $i_1, \dots, i_n \in \beth_{\mu^+}$. Then let $(N_3, M^*) \succ (N_2, M_2)$ be $|M|^+$-saturated and extending $\JJ_0$ to an indiscernible sequence $\JJ$ of the same order type as $\II$. Finally, let $N^* \supset N_3$ be an existentially closed model of $T$.
	
	By construction, $\JJ$ is indiscernible in $(N_3, M^*)$ and thus in both $N_3$ and $M^*$ separately. Furthermore, the $\e$-type of any tuple from $\JJ$ is maximal when considered in $N_3$, since this was true of any tuple from $\II$ considered in $N_1$ and thus any tuple from $\JJ_0$ considered in $N_2$ and $\JJ$ in $N_3$. Thus it remains true of any tuple from $\JJ$ considered in $N^*$. So since $\JJ$ was $\e$-indiscernible in $N_3$, it remains $\e$-indiscernible in $N^*$.
\end{proof}

\begin{lemma} \label{l:e ind to fs}
	Let $T$ be a universal theory. Let $N \models T$ be existentially closed, and contain a sequence $\I = (\abar_i : i \in I)$ that is $\e$-indiscernible over $\emptyset$. Then there are
	existentially closed $M^* \succ_\exists N$ and  $M \eprec M^*$ with $|M| = |T|$ such that $(\abar_i : i \in I)$ is both $\e$-indiscernible over $M$ and is an $\e$-$M$-f.s.\ sequence.
\end{lemma}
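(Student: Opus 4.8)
The plan is to build the desired $M$ as the union of a countable elementary-type chain of small models $M_0 \eprec M_1 \eprec \cdots$, together with an ambient existentially closed extension $\W$ of $\U$ large enough to realize the relevant finitely satisfiable types, so that at each stage we ``absorb'' enough of $\I$ into $M$ to guarantee that each $\abar_i$ is finitely satisfiable over $M$ together with the earlier $\abar_j$'s. Concretely, I would mimic the structure of \cite[Lemma 4.1]{Hanf}: fix a countable sub-tuple-sequence of $\I$ indexed by a convex piece (using $\e$-indiscernibility to move it around freely), and let $M$ be generated by countably many realizations, chosen inductively, of the ``average'' existential type of a tail of $\I$. The key point is that if $q(\xbar)$ is the set of existential formulas $\phi(\xbar, \bbar)$ (with $\bbar$ a finite tuple from $\I$ and $\phi$ such that $\U \models \phi(\abar_i, \bbar)$ for all large enough $i$) then $q$, restricted to any finite parameter set, is realized by cofinally many $\abar_i$; by compactness $q$ is consistent with $T$, hence realized in some existentially closed $\W \succ_\exists \U$. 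Building $M$ out of such realizations and elements of $\U$ witnessing existential formulas makes $M$ existentially closed of size $|T|$, and makes $\etp(\abar_i / M \abar_{<i})$ finitely satisfiable in $M$ for all $i$.

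\textbf{The steps, in order.} First, using Remark \ref{P0} / the convexity observations (or just $\e$-indiscernibility of $\I$ over $\emptyset$), reduce to the case where $I$ has order type $\omega^* + \omega$ or contains a cofinal copy of $\omega$ whose tail we will use as the ``generic'' part; more precisely, split $I$ into an initial segment $I_0$ (which will remain outside $M$) and a cofinal/tail part whose elements feed the construction of $M$. Second, define the candidate type $q(\xbar)$ over the parameters $\{\abar_i : i \in I\}$ as the set of existential formulas satisfied by $\abar_i$ for all sufficiently large $i$ in the relevant index set; check using $\e$-indiscernibility that every finite subset of $q$ is realized in $\U$, so $q \cup T$ is consistent. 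Third, build an increasing chain $M_0 \eprec M_1 \eprec \cdots$ of existentially closed models, each of size $|T|$, where $M_{n+1}$ is obtained from $M_n$ by (a) adding a new realization in $\W$ of $q$ restricted to the parameters already present, and (b) closing off under witnesses for existential formulas with parameters in the current set, iterating $\omega$ times to reach existential closure (using the fact that an increasing union of models satisfying the requisite existential closure requirement at each finite stage is existentially closed). Fourth, set $M := \bigcup_n M_n$ and verify $M \eprec \W$, $|M| = |T|$. Fifth, verify the two conclusions: $\e$-indiscernibility of $(\abar_i : i \in I)$ over $M$ follows because every finite tuple from $M$ lies in some $M_n$, which was built from realizations of tails of $\I$ together with $\U$-witnesses, and by construction these are ``to the right of'' all but finitely many $\abar_i$, so an automorphism-of-$\P$-style argument or direct non-$\e$-splitting argument (Fact \ref{f:basic}(2)) shows $\etp(\abar_{i_1} \cdots \abar_{i_k} / M)$ depends only on the qftp of $(i_1, \dots, i_k)$; and the $\e$-$M$-f.s. property holds because any existential formula $\phi(\xbar, \mbar, \abar_{<i})$ true of $\abar_i$ is, by choice of $q$ and indiscernibility, satisfied in $\U$ by some $\abar_{i'}$ with $i' $ large — but then, since $q$ was realized in $M$ cofinally, it is in fact satisfied by an element of $M$.

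\textbf{Main obstacle.} The delicate part is step three combined with step five: one must arrange simultaneously that $M$ is genuinely existentially closed (an ``upward'' closure requirement, pulling in elements of $\U$ or $\W$) and that $M$ remains ``generic to the right'' of the sequence $\I$ so that finite satisfiability of $\etp(\abar_i / M \abar_{<i})$ in $M$ is preserved — the closure process must not accidentally drag into $M$ parameters that are order-entangled with the $\abar_i$'s in a way that breaks indiscernibility or finite satisfiability. This is handled by being careful that at every finite stage the new realizations of $q$ are taken over the (finite-so-far) parameter set and that the witnesses for existential closure are witnesses of formulas already having all their parameters inside $M_n \cup \{$finitely many $\abar_j\}$, so that the ``tail'' portion of $\I$ is never needed as a parameter. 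I would also need the observation (present implicitly in the excerpt's discussion of why one leaves the existentially closed setting, and in Fact \ref{f:basic}) that realizing $q$ in $\W$ rather than in $\U$ is exactly what is required, since the average existential type of the indiscernible sequence need not itself be maximal existential — hence the extension $\W \succ_\exists \U$ in the statement. Everything else is bookkeeping parallel to \cite[Lemma 4.1]{Hanf} and \cite[Lemma 3.2]{MonNIP}.
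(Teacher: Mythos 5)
Your proposal has a genuine gap, and it sits exactly at the obstruction the paper flags at the end of Section \ref{sec:ec}: in the existentially closed category, the average existential type of (an ultrafilter on) an indiscernible sequence need not be maximal existential. When you realize $q$ in an existentially closed $\W$, the realization $\cbar$ does not just satisfy $q$ --- its existential type over $\I$ is some \emph{maximal} extension of $q$, and the extra existential formulas in that extension are completely uncontrolled. Those extra formulas can distinguish tuples from $\I$ with the same quantifier-free type in the index structure, so your step five claim that non-$\e$-splitting (Fact \ref{f:basic}(2)) yields $\e$-indiscernibility of $\I$ over $M$ does not go through: $q$ itself is invariant under order-automorphisms of $\I$, but the maximal type actually realized by $\cbar$ need not be, and Fact \ref{f:basic}(2) is not applicable since $\etp(\cbar/\I)$ is not finitely satisfiable in a model sitting below $\I$. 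The same problem recurs for the witnesses you pull in to make $M$ existentially closed: their types over $\I$ are also uncontrolled. There is additionally a circularity in your finite-satisfiability argument: to see that $\phi(\xbar;\mbar\,\abar_{<i})\in q$ for $\mbar\in M_n$ you need a tail of $\I$ to be $\e$-indiscernible over $M_n$, which is part of what is being proved. Finally, you cannot ``use a cofinal tail of $I$ to feed the construction of $M$'': the conclusion requires the \emph{entire} sequence $(\abar_i:i\in I)$ to be $\e$-indiscernible over $M$ and an $\e$-$M$-f.s.\ sequence (and $I$ may be an arbitrary well-order with maximum, as in Lemma \ref{l:e4.4}), so the new material must be appended strictly after $I$, not taken from it.

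The paper's proof is engineered precisely to avoid ever realizing an average existential type. It adds Skolem functions for existential formulas, passes to a genuine elementary extension $\C^+\succ\U^+$ of the Skolemized structure, extracts there a sequence $\JJ^+$ with the same existential type as $\II$ that is indiscernible for \emph{complete} types in the Skolemized language, returns to the existentially closed setting via Lemma \ref{l:ind ext}, end-extends $\JJ^+$ to order type $I+\omega^*$, and takes $M$ to be (the image under an automorphism of) the Skolem hull of the new $\omega^*$-block. Every element of $M$ is then a term in finitely many members of the extended $\e$-indiscernible sequence, so $\e$-indiscernibility of $\I$ over $M$ and finite satisfiability of $\etp(\abar_i/M\abar_{<i})$ both reduce to indiscernibility of the longer sequence (one slides the relevant tuple into the $\omega^*$-block below the finitely many indices used by the parameters), and $M\eprec\W$ comes for free from the existential Skolem functions. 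If you want to keep a chain-style construction, you would need to replace ``realize $q$ in $\W$'' by ``adjoin an actual new member of an $\e$-indiscernible end-extension of $\I$,'' which is essentially the paper's argument.
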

\begin{proof}  Fix $\kappa\ge |N|+|T|$.  
Since $N$ is existentially closed, $\etp(\abar_{i_1},\dots,\abar_{i_n})$ is maximal for every $i_1<\dots i_n$ from $I$.  
Choose an elementary extension $\C\succ N$ that is at least
$\kappa^+$-saturated.  
By Fact~\ref{f:ER} and stretching/thinning the index sets,  there is  an indiscernible sequence (with respect to all formulas) 
$\JJ=(\bbar_i:i\in I)$ in $\C$ such that, for each $n < \omega$ and every existential $\phi(\xbar_1,\dots,\xbar_n)$, we have that
$\phi(\bbar_{i_1},\dots,\bbar_{i_n})$ holds whenever 
$\phi(\abar_{i_1},\dots,\abar_{i_n})$ holds for some/every $i_1<\dots<i_n$.  
  That is,  $\etp^{\C}(\II)\subset\etp^{\C}(\JJ)$.  However, since $\etp^N(\II)$ is maximal, we have   $\etp^N(\II)=\etp^{\C}(\JJ)$.   
  As $\JJ$ is  indiscernible, it follows from \cite[Lemma 4.1]{Hanf} that there is some $M_J\preceq \C$ with $|M_J|=|T|$ such that $\JJ$ is indiscernible over $M_J$ and $\JJ$ is an $M_J$-f.s.\ sequence.

By Lemma~\ref{kappaecexists},  choose a $\kappa^+$-existentially closed model $M^*$ of $T$ containing $\II\JJ M_J$ of size $2^\kappa$.   Note that $f:\JJ\rightarrow\II$ sending $\bbar_i \mapsto \abar_i$ for every $i \in I$ is an $\e$-map.   By Lemma~\ref{auto} there is an existentially closed model $N^*\prec_\exists M^*$ of size $\kappa$ that has an automorphism $\sigma\in Aut(N^*)$ extending $f$.  Since $N^*$ is existentially closed,
our choice of $M_J$ gives that 
$\JJ$ is $\e$-indiscernible  over $M_J$ in $N^*$ and $\JJ$ is an $\e$-f.s.\ sequence over $M_J$.  
Put $M:=\sigma(M_J)$.
Then by the automorphism, $\II$ is $e$-indiscernible over $M$ and $\II$ is an $\e$-$M$-f.s.\ sequence.
\end{proof}

\begin{lemma} 	\label{l:e4.4} 
	Let $T$ be a universal theory with the $\e$-f.s.\ dichotomy. Then for every existentially closed $N \models T$ and every $\e$-indiscernible sequence $\II = (\abar_i : i \in I)$ in $N$ such that $I$ is a well-ordering with a maximum element, there is an $\II$-partition $(A_i : i \in I)$ of $N$ such that $\ertp(N, A_{<i}) \leq \beth_2(|T|)$ for every $i \in I$.
\end{lemma}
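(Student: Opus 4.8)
The plan is to mirror the classical argument \cite[Lemma 4.4]{MonNIP} that derives the type-counting bound from the f.s.\ dichotomy, but carried out entirely with existential types in the existentially closed setting. First I would use Lemma \ref{l:e ind to fs}: since $\II = (\abar_i : i \in I)$ is $\e$-indiscernible over $\emptyset$ in $N$, after passing to a suitable $\e$-elementary extension (which only increases the number of existential types realized and thus does not weaken the conclusion, since $N \eprec \W$ means $\ertp(N, A_{<i}) \leq \ertp(\W, A_{<i})$) I may assume there is $M \eprec N$ with $|M| = |T|$ such that $\II$ is $\e$-indiscernible over $M$ \emph{and} $(\abar_i : i \in I)$ is an $\e$-$M$-f.s.\ sequence. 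Then the sequence of singletons $(\abar_i : i \in I)$ is a partial $\e$-$M$-f.s.\ decomposition of $N$, so by Lemma \ref{l:e extend} it extends to a full $\e$-$M$-f.s.\ decomposition $(A'_j : j \in J)$ of $N$. Finally I would coarsen this decomposition to an $\II$-partition: group the pieces $A'_j$ so that each $\abar_i$ lands in its own block $A_i$ together with the pieces that were inserted for it, using that $I$ is well-ordered with a maximum to fix a canonical assignment (the maximum element absorbs any tail/left-over pieces of the decomposition, as in \cite{MonNIP}).

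The heart of the matter is then the bound $\ertp(N, A_{<i}) \leq \beth_2(|T|)$. Since $(A_i : i \in I)$ refines an $\e$-$M$-f.s.\ decomposition, for any $i$ and any tuple $\dbar$ from $A_{\geq i}$, the existential type $\etp(\dbar / M A_{<i})$ is finitely satisfiable in $M$ by Fact \ref{f:basic} together with the transitivity clause, Fact \ref{f:basic}(3): each $A'_j$ with $j \geq i$ was finitely satisfiable in $M$ over the earlier pieces, and transitivity chains these together. Now $\etp(\dbar/A_{<i})$ is determined by $\etp(\dbar / M A_{<i})$, which is finitely satisfiable in $M$; by non-$\e$-splitting (Fact \ref{f:basic}(2)) such a type is determined by the function assigning to each formula $\phi(\xbar, \ybar)$ and each $\e$-type $q$ over $M$ (realized by a tuple from $A_{<i}$) whether $\phi(\xbar, \bbar) \in \etp(\dbar/MA_{<i})$ for $\bbar \models q$. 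The number of $\e$-types over $M$ is at most $2^{|M|} = 2^{|T|}$, and there are $|T|$ formulas, so the number of such functions is at most $(2^{|T|})^{2^{|T|} \cdot |T|} = 2^{2^{|T|}} = \beth_2(|T|)$. Hence $\ertp(N, A_{<i}) \leq \beth_2(|T|)$ as claimed.

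The main obstacle I anticipate is the counting/finite-satisfiability bookkeeping rather than any single deep step: specifically, making precise that a global existential type finitely satisfiable in $M$ is coded by its restriction to $M$ plus the non-$\e$-splitting data, and checking that this really does give a $\beth_2(|T|)$ bound on existential types over $A_{<i}$ (not just over $M A_{<i}$). One must be a little careful that the relevant object is $\etp(\dbar/A_{<i})$, obtained by restriction, and that distinct such types pull back to distinct types over $M A_{<i}$; this is where non-$\e$-splitting is used, since it says the type over $M A_{<i}$ is already determined by the $M$-data plus the $\e$-types over $M$ of the parameters in $A_{<i}$. The other point requiring attention is that the reduction via Lemma \ref{l:e ind to fs} goes to an extension $\W \succ_\exists \U$, so one should phrase the statement for an arbitrary existentially closed $N$ and note that passing to $\W$ is harmless because $A_{<i}$ and the witnessing tuples all live in (an isomorphic copy of) the original data and the partition of $N$ can be read off from the partition of $\W$ by intersection.
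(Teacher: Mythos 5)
Your proposal is correct and follows essentially the same route as the paper's proof: pass to $\U \succ_\exists N$ and $M \eprec \U$ with $|M|=|T|$ via Lemma \ref{l:e ind to fs}, extend to an $\e$-$M$-f.s.\ decomposition via Lemma \ref{l:e extend}, and then bound $\ertp(N, A_{<i})$ by counting existential types that do not $\e$-split over $M$ using Fact \ref{f:basic}. The only (harmless) extra step is your "coarsening" discussion — Lemma \ref{l:e extend} already yields a decomposition indexed by $I$ itself, so the resulting $\II$-partition comes for free.
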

\begin{proof}
	By Lemma \ref{l:e ind to fs}, there is $N^* \succ_\exists N$ and $M \eprec N^*$ such that $|M| = |T|$ and $\II$ is an $\e$-$M$-f.s.\ sequence. By Lemma \ref{l:e extend}, we may extend this to an $\e$-$M$-f.s.\ decomposition $(A_i : i \in I)$ of $N$. By Lemma \ref{lem:basic}, $\etp(A_{\geq i}, A_{<i})$ does not $\e$-split over $M$. As there are at most $\beth_{2}(|T|)$ many global existential types that do not $\e$-split over $M$, the result follows.
	\end{proof}

\begin{lemma} \label{l:e-fs monNIP}
	Let $T$ be a universal theory with the $\e$-f.s.\ dichotomy. Then $T$ is monadically NIP.
\end{lemma}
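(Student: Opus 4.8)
The plan is to verify the partition‑counting criterion of Proposition \ref{p:blobwidth} with $\lambda(T):=\beth_2(|T|)$: it suffices to show that for every $N\models T$ and every sequence $\II=(\abar_i:i\in I)$ in $N$ which is indiscernible for complete types and has $I$ a well‑ordering with a maximum element, there is an $\II$‑partition $(A_i:i\in I)$ of $N$ with $\rtp_{qf}(N,A_{<i})\le\beth_2(|T|)$ for all $i\in I$. (The biconditional in Proposition \ref{p:blobwidth} then upgrades the bound to $\lambda(T)=2^{|T|}$ automatically, but any cardinal bound is all the present statement needs.)

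First I would pass to the existentially closed world. By Lemma \ref{l:ind ext} there is an existentially closed $N'\models T$ with $N\subseteq N'$ in which $\II$ is $\e$‑indiscernible; the index set $I$ is unchanged, so it is still a well‑ordering with a maximum. Applying Lemma \ref{l:e4.4} to $N'$ and $\II$ produces an $\II$‑partition $(A_i':i\in I)$ of $N'$ with $\ertp(N',A'_{<i})\le\beth_2(|T|)$ for every $i$. I would then push this partition down to $N$ by setting $A_i:=A_i'\cap N$. Since $\abar_i\subseteq N$ and $\abar_i\subseteq A_i'$, and $N=\bigsqcup_{i\in I}(A_i'\cap N)$, this is an $\II$‑partition of $N$, and moreover $A_{<i}=A'_{<i}\cap N$ for every $i$.

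It remains to bound $\rtp_{qf}(N,A_{<i})$. For any tuple $\dbar\in(N\setminus A_{<i})^{<\omega}$, the equality $A_{<i}=A'_{<i}\cap N$ together with $\dbar\subseteq N$ forces $\dbar$ to avoid $A'_{<i}$ as well, so $\dbar\in(N'\setminus A'_{<i})^{<\omega}$; moreover $\etp^{N'}(\dbar/A'_{<i})$ determines $\qftp^{N'}(\dbar/A_{<i})$, which, quantifier‑free formulas being absolute between $N$ and its extension $N'$, equals $\qftp^{N}(\dbar/A_{<i})$. Hence distinct quantifier‑free types $\qftp^{N}(\dbar/A_{<i})$ realized in $(N\setminus A_{<i})^{<\omega}$ arise from distinct existential types over $A'_{<i}$ realized in $(N'\setminus A'_{<i})^{<\omega}$, so $\rtp_{qf}(N,A_{<i})\le\ertp(N',A'_{<i})\le\beth_2(|T|)$, as required.

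I expect the only delicate point — and it is purely bookkeeping — to be matching the hypotheses of the three cited results and getting the type‑count comparison to run in the right direction: one must work with quantifier‑free rather than complete types precisely because $N\subseteq N'$ need not be elementary, and one must check that restricting the $N'$‑partition never moves a tuple out of the set of tuples avoiding $A_{<i}$. All of the substance has already been carried out in Lemma \ref{l:ind ext}, Lemma \ref{l:e ind to fs}, Lemma \ref{l:e4.4}, and Proposition \ref{p:blobwidth}.
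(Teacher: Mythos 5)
Your proposal is correct and follows essentially the same route as the paper's proof: pass to an existentially closed extension via Lemma \ref{l:ind ext}, apply Lemma \ref{l:e4.4} to get the partition with bounded existential type count, intersect with the original model, and conclude via Proposition \ref{p:blobwidth}. The only difference is that you spell out the bookkeeping (that $\rtp_{qf}$ in the substructure is bounded by $\ertp$ in the extension) which the paper compresses into a single line of inequalities.
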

\begin{proof}
	Let $M \models T$, and let $\II$ be an indiscernible sequence (for complete types) in $M$ that is well-ordered with a maximum element. By Lemma \ref{l:ind ext}, there is $N \supset M$ an existentially closed model of $T$ such that $\II$ remains $\e$-indiscernible in $N$. By Lemma \ref{l:e4.4}, $N$ admits an $\II$-partition $\set{B_i : i \in I}$ such that for every $i \in I$, $\rtp_{qf}(N, B_{<i}) \leq \ertp(N, B_{<i}) \leq \beth_2(|T|)$. Letting $A_i := B_i \cap M$, we have that $\set{A_i : i \in I}$ is an $\II$-partition of $M$ such that for every $i \in I$,  $\rtp_{qf}(M, A_{<i})\leq \beth_2(|T|)$. Thus $T$ is monadically NIP, by Proposition \ref{p:blobwidth}.
\end{proof}

Our second subgoal is to show that if $T$ does not have the $\e$-f.s.\ dichotomy, then this is witnessed by a particular configuration (a pre-coding configuration) defined by an existential formula.

The construction of this configuration in \cite[Proposition 3.11]{MonNIP} (although originally due to Shelah in \cite{Hanf}) is fundamentally based on the ability to extend a finitely satisfiable type to a larger set of parameters while maintaining finite satisfiability. As remarked in Subsection \ref{sec:ec}, this fails when working in existentially closed models, so we will have to move outside this setting to carry out the construction.

\begin{lemma}\label{l:fs lift}
	Let $T$ be a universal theory. Let $N \models T$ be existentially closed and let $M \eprec N$. Let $\abar, \bbar, c \in N$ witness a failure of the $\e$-f.s.\ dichotomy, i.e. $\etp(\bbar/M\abar)$ is finitely satisfiable in $M$, and there are formulas $\rho_1(\xbar, \ybar, z), \rho_2(\xbar, \ybar, z) \in \etp(\abar\bbar c/M)$ such that neither $\rho_1(\abar, \ybar, c)$ nor $\rho_2(\abar, \ybar, z)$ is satisfiable in $M$. Then there is  $N' \succ M$ such that $N \subset N'$, with $\bbar', c' \in N'$ such that $M, \abar, \bbar', c'$ witness the failure of the f.s.\ dichotomy, witnessed by the same formulas $\rho_1(\xbar, \ybar, z), \rho_2(\xbar, \ybar, z) \in \tp^\C(\abar\bbar' c'/M)$ such that neither $\rho_1(\abar, \ybar, c')$ nor $\rho_2(\abar, \ybar, z)$ is satisfiable in $M$.
\end{lemma}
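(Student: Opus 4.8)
The plan is to step outside the existentially closed setting and work in an elementary extension $\C$ of $M$ that contains $\U$. Inside such a $\C$ I would produce $\bbar'$ as a realization of an ultrafilter--average type over $M\abar$, so that $\tp^{\C}(\bbar'/M\abar)$ is \emph{genuinely} finitely satisfiable in $M$ and not merely its existential part, and then choose $c'$ realizing a type over $M\abar\bbar'$ that retains the existential formulas $\rho_1,\rho_2$. The crux — flagged up front — is exactly this jump from ``existential'' to ``full'' finite satisfiability; it is also what forces us out of the world of existentially closed models, where (as recalled at the end of Section \ref{sec:ec}) finitely satisfiable types cannot in general be completed while staying finitely satisfiable.

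First I would construct $\C$. Since $M\eprec\U$, every existential $\LL(M)$-sentence true in $\U$ already holds in $M$, so a routine compactness argument (on the elementary diagram of $M$ together with the atomic diagram of $\U$, sharing the constants for $M$) produces $\C_0\succ M$ into which $\U$ embeds over $M$; after renaming, $\U\subseteq\C_0$, and I take $\C\succ\C_0$ sufficiently saturated. Then $\C\models T$ because $M\models T$ and $T$ is universal, and since $\U$ is existentially closed with $\U\subseteq\C\models T$ we get $\U\eprec\C$ for free; in particular $\etp^{\C}(\dbar/B)=\etp^{\U}(\dbar/B)$ whenever $\dbar\in\U$ and $B\subseteq\U$.

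Now for $\bbar'$ and $c'$. Finite satisfiability of $\etp^{\U}(\bbar/M\abar)$ in $M$ means that the sets $\{\mbar_0\in M:\U\models\phi(\mbar_0)\}$, for $\phi\in\etp^{\U}(\bbar/M\abar)$, are nonempty with the finite intersection property, so they extend to an ultrafilter $\DD$ on tuples from $M$. Let $q_0:=\mathrm{Av}(\DD,M\abar)$, a complete type over $M\abar$ finitely satisfiable in $M$; transferring existential formulas across $\U\eprec\C$ shows $q_0\supseteq\etp^{\U}(\bbar/M\abar)$. Realize $q_0$ in the saturated $\C$ by $\bbar'$, so $\tp^{\C}(\bbar'/M\abar)=q_0$ is finitely satisfiable in $M$. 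Then I realize in $\C$ the partial type $\Sigma(z):=\{\phi(\abar,\bbar',z):\phi(\abar,\ybar,z)\in\etp^{\U}(\bbar c/M\abar)\}$ over $M\abar\bbar'$: for any finite conjunction $\phi$ of its members, $\exists z\,\phi(\abar,\ybar,z)$ lies in $\etp^{\U}(\bbar/M\abar)\subseteq q_0=\tp^{\C}(\bbar'/M\abar)$, so $\C\models\exists z\,\phi(\abar,\bbar',z)$; hence $\Sigma$ is finitely satisfiable in $\C$ and is realized by some $c'\in\C$. Now $(\bbar',c')\models\etp^{\U}(\bbar c/M\abar)$, so in particular $\C\models\rho_1(\abar,\bbar',c')\wedge\rho_2(\abar,\bbar',c')$ and $\rho_1,\rho_2\in\tp^{\C}(\abar\bbar'c'/M)$.

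It then remains to verify the two non-solvability clauses. That $\rho_2(\abar,\ybar,z)$ is not satisfiable in $M$ (computed in $\C$) is immediate: for $\mbar,m\in M$ the sentence $\rho_2(\abar,\mbar,m)$ is existential with all parameters in $\U$, so $\C\models\rho_2(\abar,\mbar,m)\Lra\U\models\rho_2(\abar,\mbar,m)$, which fails by hypothesis. The harder clause, which I expect to be the main point of the argument, is that $\rho_1(\abar,\ybar,c')$ is not satisfiable in $M$: since $c'$ need not lie in $\U$, this cannot simply be read off from $\U\eprec\C$. Here I would use that $\U$ is existentially closed, so $\etp^{\U}(c/M\abar)$ is maximal existential over $M\abar$: as $\U\not\models\rho_1(\abar,\mbar,c)$ for $\mbar\in M$, there is an obstacle $\psi(z)\in\etp^{\U}(c/M\abar)\subseteq\etp^{\U}(\bbar c/M\abar)$ to $\rho_1(\abar,\mbar,z)$, whence $c'\models\psi$ and, since $\C\models T$ with $M\abar\subseteq\C$, $\C\models\neg\rho_1(\abar,\mbar,c')$. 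Assembling everything, $\tp^{\C}(\bbar'/M\abar)$ is finitely satisfiable in $M$ while $\rho_2(\abar,\ybar,z)$ witnesses that $\tp^{\C}(\bbar'c'/M\abar)$ is not and $\rho_1(\abar,\ybar,c')$ witnesses that $\tp^{\C}(\bbar'/M\abar c')$ is not, so $M,\abar,\bbar',c'$ witness the failure of the f.s.\ dichotomy in $\C$. It is worth stressing that we only ever need $\tp^{\C}(\bbar'/M\abar)$, and not $\tp^{\C}(\bbar'/M\abar c')$, to be finitely satisfiable in $M$ — the latter is outright impossible, since $\rho_1(\abar,\ybar,c')$ has no solution in $M$ — and it is precisely this slack that lets us mount $c'$ on top of a finitely satisfiable $\bbar'$.
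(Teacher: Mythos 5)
Your proof is correct and follows essentially the same route as the paper's: build $\C\succ M$ containing $\U$ (using $M\eprec\U$), extend $\etp(\bbar/M\abar)$ to a complete type over $M\abar$ finitely satisfiable in $M$ and realize it by $\bbar'$, then realize over $M\abar\bbar'$ a type for $c'$ containing $\rho_1\wedge\rho_2$ together with (via maximality of $\etp^{\U}(c/M\abar\mbar)$) the obstacles that rule out $\rho_1(\abar,\mbar,c')$ for every $\mbar\in M$. The only cosmetic differences are that you make the ultrafilter extension explicit where the paper cites a fact, and you have $c'$ realize the whole transferred existential type rather than just the conjunction of $\rho_1\wedge\rho_2$ with the obstacles.
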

\begin{proof}
	As $M$ and $N$ have the same universal theory, there is $N' \succ M$ such that $N \subset N'$ and $N'$ is $|M|^+$-saturated. Note that the existential types of elements in $N$ do not change when $N$ is embedded in $N'$, since they are already maximal.
	
	So, working in $N'$, we have $p(\ybar) := \etp(\bbar/M\abar)$ is unchanged (as are all the existential types of elements in $N$ over $M\abar$) and so it is still finitely satisfiable in $M$. Thus $p(\ybar)$ can be extended to a complete type $q(\ybar)$ that is finitely satisfiable in $M$ (e.g. by \cite[Fact 2.3(2)]{MonNIP}). Let $\bbar' \in N'$ be a realization of $q$. Index the elements of $M^{|\bbar|}$ as $(\mbar_i : i \in I)$. For each $i \in I$, as $\etp(\abar\mbar_i c)$ is maximal, there must be some existential formula $\psi_i(\xbar, \ybar, z)$ in the type that is an obstacle to $\rho_1$. For any finite $I_0 \subset I$, the formula $\exists z (\rho_1(\abar, \ybar, z) \wedge \rho_2(\abar, \ybar, z) \wedge \bigwedge_{i \in I_0} \psi_i(\abar, \mbar_i, z))$ is in $p \subset q$. So $r(z) = \set{\rho_1(\abar, \bbar', z) \wedge \rho_2(\abar, \bbar', z) \wedge \psi_i(\abar, \mbar_i, z)| i \in I}$ is finitely satisfiable in $N'$. Thus $r(z)$ is realized in $N'$ and we may choose any realization as $c'$.
\end{proof}

	\begin{definition} \label{def:pre-code}
		  A {\em pre-coding configuration} in a structure $M$ consists of a formula $\phi(\xbar,\ybar,z)$ with parameters and with $|\xbar| = |\ybar|$, a sequence $\I=(\dbar_i:i\in \Q)$ indiscernible over the parameters of $\phi$, and for all $s < t \in \Q$, a singleton $c_{s,t}$ such that the following holds.
	\begin{enumerate}
		\item  $M \models\phi(\dbar_s,\dbar_t,c_{s,t})$
		\item  $M \models \neg\phi(\dbar_s,\dbar_v,c_{s,t})$ for all $v>t$
		\item $M \models \neg\phi(\dbar_u,\dbar_t,c_{s,t})$ for all $u<s$
	\end{enumerate}

A theory {\em admits pre-coding} ({\em by an existential formula}) if some model contains a pre-coding configuration (with $\phi(\xbar, \ybar, z)$ existential).
\end{definition}

\begin{lemma} \label{l:e-pre}
	If a universal theory $T$ does not have the  $\e$-f.s.\ dichotomy, then it admits pre-coding by an existential formula.
\end{lemma}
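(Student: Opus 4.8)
The plan is to push the failure of the $\e$-f.s.\ dichotomy over to the standard setting of a saturated monster model of a completion of $T$, invoke there the known construction that produces a pre-coding configuration from a failure of the ordinary f.s.\ dichotomy, and keep enough control on the formulas involved that the resulting $\phi$ is existential.

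First I would unpack the definition. If $T$ does not have the $\e$-f.s.\ dichotomy then, working inside $\U$, there are $M \eprec \U$ and $\abar,\bbar,c \in \U$ such that $\etp(\bbar/M\abar)$ is finitely satisfiable in $M$ but neither $\etp(\bbar c/M\abar)$ nor $\etp(\bbar/M\abar c)$ is. Each of these two failures of finite satisfiability is witnessed by a single existential formula over $M$ (a finite conjunction from the relevant existential type): there are existential $\rho_1(\xbar,\ybar,z)$ and $\rho_2(\xbar,\ybar,z)$ over $M$, both in $\etp(\abar\bbar c/M)$, such that $\rho_1(\abar,\ybar,c)$ is not satisfiable in $M$ and $\rho_2(\abar,\ybar,z)$ is not satisfiable in $M$. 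This is precisely the hypothesis of Lemma~\ref{l:fs lift}, which I would then apply to obtain $\C_0 \succ M$ with $\U \subseteq \C_0$ and elements $\bbar',c' \in \C_0$ for which $M,\abar,\bbar',c'$ witness a failure of the ordinary f.s.\ dichotomy via the \emph{same} existential formulas $\rho_1,\rho_2$. Since $T$ is universal and $M \subseteq \U \models T$, we have $M \models T_\forall = T$ by Fact~\ref{f:univ sub}, hence $\C_0 \models T$; replacing $\C_0$ by a saturated elementary extension if necessary (this preserves the f.s.\ dichotomy failure, which concerns finite satisfiability of types and thus transfers up along $\prec$), I may assume $\C_0$ is a saturated monster model $\C$ of the completion $Th(\C_0)$, still with $\C \models T$.

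Next I would apply the construction behind \cite[Proposition~3.11]{MonNIP} (due to Shelah, \cite{Hanf}): from a failure of the f.s.\ dichotomy in a saturated model witnessed by a pair of formulas $\rho_1,\rho_2$, it builds inside that model a pre-coding configuration in the sense of Definition~\ref{def:pre-code} --- a formula $\phi$, an indiscernible sequence $(\dbar_i:i\in\Q)$, and singletons $c_{s,t}$. Because $\C \models T$, restricting this configuration (the sequence, the parameters of $\phi$, and the $c_{s,t}$) to a sufficiently small elementary submodel of $\C$ keeps it a pre-coding configuration while placing it inside a genuine model of $T$ (existential and universal formulas, and indiscernibility over the parameters, all transfer down along $\prec$). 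It then remains only to check that $\phi$ may be taken existential. This holds because the construction assembles $\phi$ as a positive combination --- using $\wedge$, $\vee$, and $\exists$ --- of $\rho_1$ and $\rho_2$ with parameters: the demand that $\phi$ \emph{hold} at the diagonal triples $(\dbar_s,\dbar_t,c_{s,t})$ comes from $\rho_1$ and $\rho_2$ holding there, while the demands that $\neg\phi$ hold at the reversed triples are met by arranging for one constituent of $\phi$ to fail, so that $\phi$ itself never negates $\rho_1$ or $\rho_2$. Since existential formulas are closed under $\wedge$, $\vee$, and $\exists$, such a $\phi$ is existential, and $T$ admits pre-coding by an existential formula.

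The main obstacle is this last verification: going through the construction of \cite[Proposition~3.11]{MonNIP} and confirming that nothing in it forces a negation inside $\phi$ --- equivalently, that every formula occurring in the definition of $\phi$ is a positive Boolean combination, with parameters, of the existential formulas $\rho_1$ and $\rho_2$. Everything else is either a direct appeal to Lemma~\ref{l:fs lift} or the routine point that universality of $T$ lets the configuration descend to a model of $T$; the genuinely new content over \cite{MonNIP} is exactly this bookkeeping on the complexity of $\phi$, which is what the detour through existentially closed models and Lemma~\ref{l:fs lift} is arranged to make possible.
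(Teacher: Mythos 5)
Your proposal follows essentially the same route as the paper: extract existential formulas $\rho_1,\rho_2$ witnessing the failure, lift to the failure of the ordinary f.s.\ dichotomy via Lemma~\ref{l:fs lift}, invoke \cite[Proposition~3.11]{MonNIP}, and observe that the resulting pre-coding formula is existential. The one point you leave as an ``obstacle'' --- checking that the construction introduces no negations --- is resolved in the paper by noting that the proof of the cited result lets one take the pre-coding formula to be exactly $\rho_1\wedge\rho_2$.
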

\begin{proof}
	Suppose $T$ does not have the  $\e$-f.s.\ dichotomy, and let $N \models T$ be an existentially closed model such that $M \eprec N$ and $\abar\bbar c \subset N$ witness its failure, with existential formulas $\rho_1, \rho_2$ witnessing the required failures of finite satisfiability. Let $N' \succ M$ be as in Lemma \ref{l:fs lift}. Then $N'$ witnesses a failure of the f.s.\ dichotomy, with $\rho_1, \rho_2$ still witnessing the required failures of finite satisfiability, and so $N'$ admits pre-coding by \cite[Proposition 3.11]{MonNIP}. By the last sentence of the proof of the cited result, we may take the pre-coding formula to be $\rho_1 \wedge \rho_2$, which is existential.
\end{proof}

\begin{theorem} \label{thm:e pre-code}
For a universal theory $T$, the following are equivalent.
\begin{enumerate}
	\item $T$ is monadically NIP.
	\item $T$ does not admit pre-coding by an existential formula.
	\item $T$ has the $\e$-f.s.\ dichotomy
\end{enumerate}

In particular, if $T$ is not monadically NIP then it admits pre-coding by an existential formula.
\end{theorem}
\begin{proof}
	$(1) \Ra (2)$ If $T$ admits pre-coding (by any formula), then $T$ is not monadically NIP by \cite[Proposition 3.11, Theorem 1.1]{MonNIP}.
	
	$(2) \Ra (3)$ This is Lemma \ref{l:e-pre}.
	
	$(3) \Ra (1)$ This is Lemma \ref{l:e-fs monNIP}.
\end{proof}

We close this subsection by introducing a configuration that is related to pre-coding, but is easier to manipulate.
\begin{definition}  A {\em split configuration} in a structure $M$ consists of a quantifier-free formula $\psi(\xbar,\ybar,z,\wbar)$ (possibly with hidden parameters from $M$)
 together with disjoint, infinite index sets $(I,\le)$, $(J,\le)$, and infinite sets of tuples
$\{\abar_i:i\in I\}$, $\{\bbar_j:j\in J\}$, and $\{\cbar_{i,j}:(i,j)\in I\times J\}$ (with $|\cbar_{i,j}| = |z\wbar|$) from $M$ satisfying:  for all $i\in I$ and $j\in J$, putting $\phi(\xbar,\ybar,z);=\exists\wbar\psi$
and letting $\cbar^0_{i,j}$ denote the 0-th coordinate of $\cbar_{i,j}$,
\begin{itemize}
\item  $M\models\psi(\abar_i,\bbar_j,\cbar_{i,j})$
\item  For all $i^-<i$, $M\models \neg\phi(\abar_{i^-},\bbar_j,\cbar^0_{i,j})$
\item  For all $j^+>j$, $M\models\neg\phi(\abar_i,\bbar_{j^+},\cbar^0_{i,j})$
\end{itemize}
\end{definition}

Clearly, any pre-coding configuration by an existential formula gives rise to a split configuration by putting $I:=\Q^-$, $J:=\Q^+$, $\abar_i:=\dbar_i$, and $\bbar_j:=\dbar_j$.
 However,
a split configuration is more malleable, as e.g., we do not require $\lg(\abar_i)$ to equal $\lg(\bbar_j)$.
This freedom is used in the following lemma.

\begin{lemma}  \label{useRamsey}
  If $M,\psi,\{\abar_i:i\in I\}, \{\bbar_j:j\in J\},\{\cbar_{i,j}:(i,j)\in I\times J\}$ is a split configuration, then there is an elementary extension $M'\succeq M$
and a $\P$-indiscernible split configuration $\A$ inside $M'$ (of the same EM-type).  Moreover, among all quantifier-free $\psi$ yielding a split configuration, if we choose one where
$\lg(\wbar)$ is least possible, and then among those where $\lg(\xbar)+\lg(\ybar)$ is least possible, then the $\P$-indiscernible split configuration is a $\P$-indiscernible partition.
\end{lemma}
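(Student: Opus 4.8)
The goal is to produce, from an arbitrary split configuration in $M$, an elementary extension $M' \succeq M$ carrying a $\P$-indiscernible split configuration of the same EM-type, and moreover to arrange that it is a $\P$-indiscernible \emph{partition} once $\psi$ is chosen with $\lg(\wbar)$ minimal and then $\lg(\xbar)+\lg(\ybar)$ minimal. The first half is a routine application of the standardized-indiscernible machinery: since $Age(\P)$ has the Ramsey property and $\P$ is its Fra\"isse limit (Lemma \ref{ageRamsey}), the generalized Ramsey theorem lets us, starting from the array $\{\abar_i : i \in I\}, \{\bbar_j : j \in J\}, \{\cbar_{i,j}\}$ indexed (after shrinking $I,J$ to countable dense linear orders and re-indexing $I \cong (I,\le)\restriction I_0$, $J \cong (J,\le)\restriction J_0$) by a copy of $\P$, find $M' \succeq M$ containing an array $\A$ indexed by $\P$ via some $f$ that is $\P$-indiscernible and realizes the same EM-type over the parameters of $\psi$ as the original array. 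One must check the split-configuration conditions persist: conditions of the form $M' \models \psi(\abar_i,\bbar_j,\cbar_{i,j})$ and $M' \models \neg \phi(\abar_{i^-},\bbar_j,c_{i,j})$, $M' \models \neg\phi(\abar_i,\bbar_{j^+},c_{i,j})$ are each expressible as statements about the $L$-type of a finite tuple from $\A$ whose index tuple realizes a fixed quantifier-free type in $\P$ (using clause (6), $\le\restriction\Gamma$ is lexicographic, so e.g. $i^- < i$ together with $\rho(i,j)$ has a fixed $\P$-type), so $\P$-indiscernibility transfers them from the original array.

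\textbf{The partition claim.} The substantive point is the "moreover". Choose $\psi$ with $\lg(\wbar)$ least, then $\lg(\xbar)+\lg(\ybar)$ least, among all quantifier-free formulas witnessing a split configuration in any elementary extension of $M$; fix the resulting $\P$-indiscernible split configuration $\A$. I must show $\A$ is a $\P$-indiscernible partition, i.e. (after the remarks following the definition of $\P$-indiscernible partition) that $f(\delta)$ is repetition-free for each $\delta$ and $f(\delta)\cap f(\delta') = \emptyset$ for $\delta \ne \delta'$. Suppose not. If some coordinate of $\abar_i$ is repeated, or equals a coordinate of some $\abar_{i'}$ for fixed-$\P$-type reasons forced to coincide, or equals a parameter of $\psi$, then by $\P$-indiscernibility this equality holds uniformly and we may delete that coordinate from $\xbar$ (substituting the appropriate other coordinate or parameter into $\psi$), producing a quantifier-free $\psi'$ with the same $\wbar$ but strictly shorter $\xbar$; this still witnesses a split configuration (the truth of $\psi$ and the falsity of $\phi = \exists\wbar\,\psi$ are unchanged by this purely notational substitution), contradicting minimality of $\lg(\xbar)+\lg(\ybar)$. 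The same argument applies to $\ybar$ via the $\bbar_j$'s. The remaining cases — a coordinate of $\cbar_{i,j}$ repeating, or coinciding with a coordinate of $\abar_i$, $\bbar_j$, or a parameter, or two distinct strips forced equal — are handled similarly: such a coincidence is uniform by $\P$-indiscernibility, and one absorbs it into $\psi$. Care is needed only with the $0$-th coordinate $c_{i,j}$ of $\cbar_{i,j}$, which plays a distinguished role in conditions (2) and (3): if $c_{i,j}$ itself were forced to coincide with, say, a coordinate of $\abar_i$ or a parameter, one checks the two negation conditions would then fail to be simultaneously nontrivial (the pre-coding/split configuration degenerates), so this cannot happen for a genuine configuration.

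\textbf{Main obstacle.} I expect the bookkeeping in the partition step to be where the real work lies: one must enumerate the possible ways two images $f(\delta), f(\delta')$ can overlap — same sort, different sorts, involving $\Gamma$ versus $I$ or $J$, involving hidden parameters of $\psi$ — and in each case exhibit the shorter witnessing formula, verifying that deleting the redundant coordinate genuinely preserves \emph{all three} clauses of the split configuration (the two with $\neg\phi$ are the delicate ones, since there $\phi = \exists\wbar\,\psi$ and one must not accidentally change which variable is existentially bound). The cleanest way to organize this is: first reduce to the case that each of $f(i)$, $f(j)$, $f(\gamma)$ is internally repetition-free and disjoint from the parameter set (these reductions shrink $\lg(\xbar),\lg(\ybar)$, or $\lg(\cbar)$ without touching $\wbar$), then observe that by $\P$-indiscernibility and the rigidity of $\P$ (at most one isomorphism between finite substructures, so distinct $\delta \ne \delta'$ of the same $\P$-type have no coordinate of $f(\delta)$ forced equal to one of $f(\delta')$ unless a parameter mediates, already excluded), the images are pairwise disjoint. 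Everything else — the existence of $M'$, the transfer of indiscernibility, the EM-type matching — is standard and can be cited or sketched in a line.
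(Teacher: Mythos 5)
Your first half (compactness plus the Ramsey property of $Age(\P)$, and the observation that the three split-configuration clauses are each determined by the quantifier-free type of the index tuple in $\P$, so they transfer to the indiscernible copy) is exactly the paper's argument, and your handling of repetitions inside a single $\abar_i$ or $\bbar_j$, and of overlaps $\abar_i\cap\abar_{i'}\neq\emptyset$ (name the forced-constant coordinate by a parameter and shorten $\xbar$), also matches the paper.

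There is, however, a genuine gap in your treatment of the $\Gamma$-strip, and it is precisely the case for which the lemma's lexicographic minimization (first $\lg(\wbar)$, only then $\lg(\xbar)+\lg(\ybar)$) exists. Consider $\cbar_{i,j}\cap\cbar_{i,\ell}\neq\emptyset$ for a fixed $i$ and $j\neq\ell$. You assert that "such a coincidence is uniform by $\P$-indiscernibility, and one absorbs it into $\psi$," and later that rigidity of $\P$ forces any cross-fiber coincidence to be mediated by a parameter. That is false here: the pair $(\rho(i,j),\rho(i,\ell))$ determines $i$ via $\pi_1$, and the stabilizer of $i$ in $Aut(\P)\cong Aut(I,\le)\times Aut(J,\le)$ still moves $j,\ell$ freely, so what indiscernibility gives you is a coordinate $c^*_i$ of $\cbar_{i,j}$ that is constant in $j$ but genuinely varies with $i$. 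It is not a parameter, and it cannot be deleted. The paper's resolution is to keep the same $\psi$ but re-sort that coordinate out of $\wbar$ and into $\xbar$: this \emph{increases} $\lg(\xbar)$ but \emph{decreases} $\lg(\wbar)$, contradicting the minimality of $\lg(\wbar)$ — which is the only reason the minimization is taken in that order. Your write-up never invokes the priority of $\lg(\wbar)$, and your organizational claim that the $\cbar$-reductions proceed "without touching $\wbar$" shows the case is missing rather than merely unstated (note also that shortening the $\Gamma$-strip at all means shortening $\wbar$, since $\lg(\cbar_{i,j})=1+\lg(\wbar)$). Your separate remark about the $0$-th coordinate $c_{i,j}$ is correct and in fact slightly more careful than the paper — if $c_{i,j}$ depended only on $i$ then clause (3) would contradict clause (1) applied to $(i,j^+)$ — but it does not cover the coordinates of $\wbar$, which is where the real work is.
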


\begin{proof}   The first sentence follows by an analogue of the usual argument for producing indiscernible sequences using Ramsey's theorem and compactness, using that $Age(\P)$ has the  Ramsey property by Lemma~\ref{ageRamsey}. (See \cite[Theorem 3.12]{Scow} for a proof of a more general statement.)
For the second sentence, let $\psi$ satisfy the minimality conditions and suppose $\A$ is $\P$-indiscernible.  We argue that $\A$ is a $\P$-indiscernible partition.
First, if any $\abar_i$ had repeated values, then by $\P$-indiscernibility every $\abar_{i'}$ would have the same values repeated.  Thus, we could modify
$\psi$ to remove one of the free variables corresponding to the indices with equal values, contradicting our minimality assumption.  Thus, each $\abar_i$ is without repeats.  
 If  $\abar_i$ and $\abar_{i'}$ were not disjoint, then by $\P$-indiscernibility we would have some coordinate identically constant.
Thus, we could add an external  parameter for this common value and again get a smaller $\psi$.  Thus, $\{\abar_i:i\in I\}$ are pairwise disjoint 
with no repeated values.  Similar reasoning shows the same holds for $\{\bbar_j:j\in J\}$.  

Now we turn to $\cbar_{i,j}$.  First, if there were $i\neq k\in I$, $j\neq \ell\in J$ such that $\cbar_{i,j}$ and $\cbar_{k,\ell}$ were not disjoint, then by $\P$-indiscernibility there would be a constant value which we could remove.    Next, if there were $i\in I$ and $j\neq\ell\in J$ such that $\cbar_{i,j}$ and $\cbar_{i,\ell}$ were not disjoint, then by $\P$-indiscernibility
there would be a common value $c^*_i$ in some coordinate.  But then, by $\P$-indiscernibility there would be another value $c^*_{i'}$ for every other $i'\in I$.
Then we could take the same $\psi$, but rearrange the free variables making this coordinate a part of $\xbar$.  This would decrease $\lg(\wbar)$, contradicting our minimality assumption (even though $\lg(\xbar)$ is increased).  Similar reasoning shows that $\cbar_{i,j}$ and $\cbar_{i',j}$ are disjoint for all $i,i'\in I$ and $j\in J$.  Thus, 
$\{\cbar_{i,j}:(i,j)\in I\times J\}$ are pairwise disjoint.  
\end{proof}

\begin{corollary} \label{splitexists} Suppose $T$ is a universal $L$-theory that is not monadically NIP.
  Then in some expansion $L'\supseteq L$ by finitely many constants, there is an $L'$-structure  $M'\models T$ and a 0-definable, quantifier-free
  $\psi(\xbar,\ybar,z,\wbar)$ 
with a partitioned, $\P$-indiscernible split configuration
 \[\A:=\bigcup\{\{\abar_i:i\in I\},\{\bbar_j:j\in J\},\{\cbar_{i,j}:(i,j)\in I\times J\}\}\]
 via the mapping $f(i):=\abar_i$, $f(j):=\bbar_j$, $f(\gamma_{i,j}):=\cbar_{i,j}$. 
\end{corollary}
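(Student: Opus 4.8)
The plan is to chain together the three earlier results: Theorem~\ref{thm:e pre-code} to extract an existential pre-coding configuration, the observation immediately preceding Lemma~\ref{useRamsey} to convert it into a (not necessarily partitioned) split configuration, and then Lemma~\ref{useRamsey} to pass to a $\P$-indiscernible partitioned split configuration in an elementary extension. The only genuine work is bookkeeping: tracking the passage to an expansion by finitely many constants so that the witnessing formula becomes $0$-definable and quantifier-free, and checking that the resulting structure still models $T$.

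First I would apply Theorem~\ref{thm:e pre-code}: since $T$ is not monadically NIP, $T$ admits pre-coding by an existential formula, so there is a model $M_0 \models T$, an existential formula $\phi_0(\xbar, \ybar, z)$ with parameters from $M_0$, an indiscernible sequence $(\dbar_i : i \in \Q)$ over those parameters, and singletons $c_{s,t}$ for $s < t$ satisfying conditions (1)--(3) of Definition~\ref{def:pre-code}. Write $\phi_0 = \exists \wbar\, \psi_0(\xbar, \ybar, z, \wbar)$ with $\psi_0$ quantifier-free. Name the finitely many parameters of $\phi_0$ by new constants, obtaining a language $L'$ and an $L'$-expansion $M_0'$; since these are constants, $M_0' \models T$ still (as $T$ is an $L$-theory, and the reduct of $M_0'$ to $L$ is $M_0$). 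Now $\psi_0$ is a $0$-definable quantifier-free $L'$-formula. Setting $I := \Q^-$, $J := \Q^+$, $\abar_i := \dbar_i$ for $i \in I$, $\bbar_j := \dbar_j$ for $j \in J$, and $\cbar_{s,t}$ to be any tuple whose $0$-th coordinate is $c_{s,t}$ and that also witnesses the existential quantifier $\exists\wbar$ in (1) — i.e. $M_0' \models \psi_0(\abar_i, \bbar_j, \cbar_{i,j})$ — gives a split configuration in $M_0'$: clause (1) is immediate, and clauses (2),(3) of the pre-coding configuration are exactly the $\neg\phi$ conditions required of a split configuration (noting $\phi = \exists\wbar\,\psi_0 = \phi_0$, up to the renaming of parameters as constants). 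Here I would remark that $\lg(\abar_i) = \lg(\bbar_j)$ in this particular split configuration, but that is harmless.

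Next I would invoke Lemma~\ref{useRamsey}. Among all quantifier-free $L'$-formulas $\psi$ (with parameters, but we have just seen a $0$-definable one exists) yielding a split configuration in some model of $T$, choose one minimizing $\lg(\wbar)$ and then $\lg(\xbar)+\lg(\ybar)$; this minimal $\psi$ may a priori live in a different model and use hidden parameters, but by naming those parameters as further constants (again not affecting modelhood of $T$) and absorbing them into $L'$, we may take $\psi$ to be $0$-definable and quantifier-free over the (possibly further enlarged) $L'$, with a split configuration in some $M'' \models T$. Apply Lemma~\ref{useRamsey} to $M''$: by compactness and the Ramsey property of $Age(\P)$ (Lemma~\ref{ageRamsey}) there is an elementary extension $M' \succeq M''$ containing a $\P$-indiscernible split configuration of the same EM-type, and by the minimality of $\psi$ this $\P$-indiscernible split configuration is in fact a $\P$-indiscernible partition. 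Since $M' \succeq M''$ and $M'' \models T$ (as $T$ is universal, hence preserved both downward and, being a theory, upward under elementary equivalence), $M' \models T$. The partitioned $\P$-indiscernible split configuration $\A = \bigcup\{\{\abar_i : i \in I\}, \{\bbar_j : j \in J\}, \{\cbar_{i,j} : (i,j) \in I \times J\}\}$ is then given by the map $f(i) := \abar_i$, $f(j) := \bbar_j$, $f(\gamma_{i,j}) := \cbar_{i,j}$, which is exactly the statement.

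The main obstacle, such as it is, is purely organizational: making sure that every time we "name parameters by constants" we stay inside a fixed finite expansion $L' \supseteq L$ and that the relevant structure continues to model $T$ — this is why the statement is phrased with "some expansion $L' \supseteq L$ by finitely many constants" rather than over $L$ itself. One should double-check that the minimal-$\psi$ step does not smuggle in infinitely many parameters: $\psi$ is a single formula, so it has finitely many, and Lemma~\ref{useRamsey} explicitly allows hidden parameters, so naming them costs only finitely many constants. Everything else is a direct citation of the preceding lemmas.
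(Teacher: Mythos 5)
Your proposal is correct and follows essentially the same route as the paper: extract an existential pre-coding configuration via Theorem~\ref{thm:e pre-code}, pass to a split configuration by splitting $\Q$ into $\Q^-$ and $\Q^+$, minimize $\lg(\wbar)$ and then $\lg(\xbar)+\lg(\ybar)$ over all models and split configurations, name the hidden parameters by constants, and apply Lemma~\ref{useRamsey}. The bookkeeping you flag (finitely many constants, preservation of $T$ under the expansion and the elementary extension) is exactly what the paper leaves implicit.
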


\begin{proof}  As $T$ is not monadically NIP, it admits a pre-coding configuration and hence a split configuration in some model.  Among all possible models of $T$ and all split configurations, fix one that first minimizes $\lg(\wbar)$ and among those, minimizes $\lg(\xbar)+\lg(\ybar)$ in the underlying $\psi(\xbar,\ybar,z,\wbar)$.
Expand $M$ by adding constants for the hidden parameters of $\psi$ and apply Lemma~\ref{useRamsey}.
\end{proof}

\subsection{Monadic stability}

The theorem in this section is proved for graph classes in \cite[Theorem 1.3]{rank}. Although the proof there generalizes immediately to other languages, we give a somewhat different proof.

\begin{definition}  Let $T$ be any (possibly incomplete) $\LL$-theory.   A partitioned $\LL$-formula $\phi(\xbar;\ybar)$ has the {\em order property with respect to $T$}
if there is some $M\models T$ such that,  for each $n\ge 1$, there are  $\{\abar_i:i\in[n]\}\subseteq M^{|\xbar|}$ and $\{\bbar_i:i\in[n]\}\subseteq M^{|\ybar|}$ such that
for all $i,j\in[n]$,
$$M\models\phi(\abar_i,\bbar_j)\quad \hbox{if and only if} \quad i< j$$
We say that an $\LL$-formula $\phi(\zbar)$ is {\em always stable} (with respect to $T$) if, for every partition $\zbar=\xbar\smallfrown\ybar$,
 $\phi(\xbar;\ybar)$ of $\phi(\zbar)$ does not have the order property with respect to $T$.
\end{definition}

Thus, an $\LL$-theory $T$ is unstable if and only if some $\phi(\zbar)$ is not always stable.  
It is well known that for any $\LL$-theory $T$, the set of always stable $\LL$-formulas $\theta(\zbar)$ is closed under boolean combinations.
Indeed, if $\phi(\xbar;\ybar):=\psi(\xbar;\ybar)\vee\theta(\xbar;\ybar)$ has the order property, then either $\psi(\xbar;\ybar)$ or $\theta(\xbar;\ybar)$ does as well by an application of Ramsey's theorem;  and if $\neg\phi(\xbar;\ybar)$ has the order property, then so does the `dual formula' $\phi(\ybar;\xbar)$.

\begin{lemma}  \label{qfbonus}
Suppose  $S$ and $T$ are $\LL$-theories satisfying
$S_\forall\subseteq T$.  If some quantifier-free $\LL$-formula
$\theta(\xbar,\ybar)$ has the order property with respect to $T$, then $\theta(\xbar,\ybar)$  also has the order property with respect to $S$.
\end{lemma}  

\begin{proof}  Choose any $M\models T$ witnessing that $\theta(\xbar,\ybar)$ has the order property with respect to $T$.
By Fact~\ref{f:univ sub}, there is a model $N\supseteq M$ of $S$.  Choose any $n\ge 1$ and  $\{\abar_i:i\in[n]\}\subseteq M^{|\xbar|}$, $\{\bbar_i:i\in[n]\}\subseteq M^{|\ybar|}$
witnessing that $\theta(\xbar,\ybar)$ has the order property in $M$.  Since $\theta(\xbar,\ybar)$ is quantifier free, the same tuples witness that $\theta(\xbar,\ybar)$ has the order property in $N$.
\end{proof}

\begin{lemma} \label{claimtransfer}  Suppose $T$ is a complete, monadically NIP $\LL$-theory for which $\psi(\xbar,\ybar)
		 := \exists z \phi(\xbar; \ybar; z)$ has the order property with respect to $T$.  Then either $\phi(\xbar z; \ybar)$ or $\phi(\xbar; \ybar z)$  also has the order property with respect to $T$.
\end{lemma}

\begin{proof}
		We work in a large saturated model $\C$ of $T$. Since $\psi(\xbar;\ybar)$ has the order property, there is an indiscernible sequence $\II := (\abar_i\bbar_i : i \in \Z)$ such that $\C \models \psi(\abar_i, \bbar_j) \iff i < j$. 
	
First, assume that for every $n\ge 1$, $\C\models\exists z\bigwedge_{j=1}^n \phi(\abar_0,\bbar_j,z)$.  Under this assumption, the indiscernibility of $\II$ and compactness imply that for every $i\in \Z$,
$$p_i(z):=\{\phi(\abar_i,\bbar_j,z):i<j\}\cup\{\neg\phi(\abar_i,\bbar_j,z):j\le i\}$$
is consistent.  As $\C$ is sufficiently saturated, choosing $c_i$ to realize $p_i$, the sequence $(\abar_i\bbar_ic_i)$ witnesses that $\phi(\xbar z;\ybar)$ has the order property.
Similarly, if for every $n\ge 1$, $\C\models\exists z\bigwedge_{j=1}^n \phi(\abar_{-j},\bbar_0,z)$, then we can find a sequence $(d_j)$ so that $(\abar_i\bbar_id_i)$ witnesses that
$\phi(\xbar;\ybar z)$ has the order property.  

Finally, assume neither of these properties hold, and we will contradict $T$ being monadically NIP.  Choose any $s<t$ from $\Z$ and choose $c\in\C$ such that $\C\models\phi(\abar_s,\bbar_t,c)$.
From the negation of the first case above and by indiscernibility, there is a finite bound (independent of $s$) on the number of elements $\bbar_j$ such that $\C \models \phi(\abar_s, \bbar_j, c)$. Similarly, from the negation of the second case, there is a finite bound (independent of $t$) on the number of elements $\abar_i$ such that $\C \models \phi(\abar_i, \bbar_t, c)$. Thus there is a finite $F \subseteq \Z$ such that, letting $J := (\Z \bs F) \cup \set{s,t}$, we have $\C\models\neg\phi(\abar_s,\bbar_j,c)$ for every $j \in J\bs\set{t}$ and $\C\models\neg\phi(\abar_i,\bbar_t,c)$ for every $i\in J\bs\set{s}$. 
That is, $c$ witnesses the partial type $\Gamma_{s,t}(z)$ over $\bigcup\{\abar_j\bbar_j:j\in J\}$asserting 
\[\hbox{for all $(i,j)\in J^2$},\quad \C\models\phi(\abar_i,\bbar_j,z) \quad  \hbox{if and only if} \quad  (i,j)=(s,t)\]
By the indiscernibility of $\II$ and the saturation of $\C$, it follows that for every $i^*<j^*$ there is $c_{i^*,j^*}$ such that, for all pairs $(i,j)\in \Z^2$,
$\C\models \phi(\abar_i,\bbar_j,c_{i^*,j^*})$ if and only if $(i,j)=(i^*,j^*)$.  This is a pre-coding configuration, contradicting $T$ being monadically NIP.
\end{proof}

\begin{theorem} \label{thm:qf op} 
If $T$ is a monadically NIP $\LL$-theory that is not monadically stable, then some atomic $\LL$-formula is not always stable with respect to every $\LL$-theory
$S$ with $S_\forall\subseteq T$.  
\end{theorem}

\begin{proof}  As $T$ is not monadically stable, choose $M\models T$ and a monadic  expansion $M^*$ of $M$ in a language $\LL^*=\LL\cup\{U_1,\dots,U_n\}$
for which $T^*=Th(M^*)$ is unstable.   Note that if every atomic $\LL^*$-formula were always stable, then by Lemma~\ref{claimtransfer} (applied to $T^*$) 
and the the closure of always stability under boolean combinations, an induction on  quantifier complexity would imply that every $\LL^*$-formula is always stable, contradicting $T^*$ unstable.
Thus, 
some atomic $\LL^*$-formula $\alpha^*$ is not always stable.
Clearly, $\alpha^*$ cannot be unary, hence $\alpha^*$ is an atomic $\LL$-formula.  Say $\alpha^*(\xbar;\ybar)$ has the order property with respect to  $T^*$.
Then, as $T\subseteq T^*$,  Lemma~\ref{qfbonus} implies $\alpha^*(\xbar;\ybar)$ also has the order property with respect to $S$ for 
every $\LL$-theory $S$ with  $S_\forall\subseteq T$.
\end{proof}

\section{Collapse of dividing lines} \label{sec:app}

We now apply the results of the previous section to hereditary classes of finite structures. For hereditary classes, it is common to use more finitary definitions of (monadic) NIP/stability than Definition \ref{def:prop}. We begin by showing these are equivalent for NIP/stability. The equivalence for monadic NIP/stability will use the main result of this section, and so appear at its end in Proposition \ref{prop:findef}.

	\begin{definition}
	For a formula $\phi(\xbar;\ybar)$ with its free variables partitioned and a bipartite graph $G=(I,J;E)$, we say a structure $M$ {\em encodes $G$ via $\phi$} if
	there are sets $A=\set{\abar_i | i \in I} \subseteq M^{|\xbar|}, B=\set{\bbar_j | j \in J} \subseteq M^{|\ybar|}$ such that 
	$M\models\phi(\abar_i,\bbar_j)\Leftrightarrow G\models E(i,j)$.  
	
	Given a class $\CC$ of structures, {\em $\phi$ encodes $G$ in $\CC$} if there is some $M_G \in \CC$ encoding $G$ via $\phi$.
\end{definition}

\begin{notation} \label{not:th(c)}
	Given a class $\CC$ of structures, we use $Th(\CC) := \bigcap_{M \in \CC} Th(M)$ to denote the common theory of structures in the class.
\end{notation}

\begin{lemma} \label{lem:th(C)}
	Let $\CC$ be a class of structures. Then $Th(\CC)$ is NIP if and only if no formula encodes every finite bipartite graph in $\CC$, and $Th(\CC)$ is stable if and only if no formula encodes every finite half-graph in $\CC$.
\end{lemma}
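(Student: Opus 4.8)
The plan is to prove Lemma \ref{lem:th(C)} by a direct compactness argument, transferring between the ``finitary'' encoding statements about $\CC$ and the ``infinitary'' combinatorial characterizations of NIP and stability that are already built into $Th(\CC)$ as a first-order theory. Recall that a complete theory is NIP iff no formula has the independence property, and that the independence property for $\phi(\xbar;\ybar)$ can be expressed (by compactness, using an indiscernible sequence or simply shattering of arbitrarily large finite sets) as: for every $n$, there are $\abar_1, \dots, \abar_n$ and $(\bbar_S : S \subseteq [n])$ with $\phi(\abar_i; \bbar_S) \iff i \in S$. Similarly, $\phi$ is stable iff it does not have the order property, and the order property is expressible as: for every $n$, there are $\abar_1, \dots, \abar_n$, $\bbar_1, \dots, \bbar_n$ with $\phi(\abar_i; \bbar_j) \iff i \leq j$ (equivalently, arbitrarily large finite half-graphs are encoded).

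First I would handle the NIP direction. Suppose some formula $\phi$ encodes every finite bipartite graph in $\CC$; in particular, for each $n$, $\phi$ encodes the ``shattering graph'' on vertex classes $[n]$ and $\pset([n])$ (with $E(i, S)$ iff $i \in S$) in some $M_n \in \CC$. Then the existential sentence $\sigma_n$ asserting ``there exist $\abar_1, \dots, \abar_n$ and $2^n$ tuples $\bbar_S$ realizing this incidence pattern'' holds in $M_n$, hence is consistent with $Th(\CC)$ (indeed, since it is existential and holds in some member of $\CC$, and $Th(\CC)$ is the common theory, we can argue: take any completion $T^*$ of $Th(\CC)$; we need a model of $T^*$ with arbitrarily large shattered sets). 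Here there is a small subtlety: $Th(\CC)$ need not be complete, so I would phrase things as: $Th(\CC)$ is NIP iff every completion is NIP, and then observe that if $\phi$ encodes all finite bipartite graphs in $\CC$ then $\phi$ has the independence property in some completion — for this, note that each $\sigma_n$ is consistent with $Th(\CC)$, and since the $\sigma_n$ are (up to adjusting $\phi$ to $\phi^{\mathrm{opp}}$ or a boolean combination) essentially increasing in strength, a single completion witnesses all of them; alternatively, use that there is a single $M \in \CC^*$ (an ultraproduct of the $M_n$, which models $Th(\CC)$) in which $\phi$ shatters an infinite set, giving a completion $Th(M)$ in which $\phi$ has IP. Conversely, if $Th(\CC)$ is not NIP, some completion $T^*$ has a formula $\phi$ with IP; then for each $n$, $T^*$ — and hence some $M_n \in \CC$, since $T^* \supseteq Th(\CC)$ and the relevant sentence is a consequence of $T^*$, so holds in... — wait, one must be careful: a sentence consistent with $T^*$ need not hold in any member of $\CC$. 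I would instead argue: $T^*$ proves each $\sigma_n$ (since $\phi$ having IP in $T^*$ means every model of $T^*$ has arbitrarily large shattered finite sets), so $Th(\CC) \models \sigma_n$ is false in general — rather, $\sigma_n$ is an existential consequence of $T^*$ hence holds in every model of $T^*$; but $Th(\CC) \subseteq T^*$ does not give $\sigma_n \in Th(\CC)$. The clean route is: pick any model $M^* \models T^*$; by the ultraproduct/compactness relationship between $\CC$ and $Th(\CC)$, $M^*$ embeds elementarily into (or is elementarily equivalent to) an ultraproduct of members of \CC, so some $M_n \in \CC$ satisfies the existential sentence $\sigma_n$, whence $\phi$ encodes the $n$-shattering graph, and every finite bipartite graph on $[n] \times [m]$ is an induced subgraph of a large enough shattering graph, so $\phi$ encodes every finite bipartite graph in $\CC$.

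The stability direction is entirely parallel, replacing ``shattering graph / independence property'' with ``half-graph / order property'': $Th(\CC)$ is stable iff every completion is stable iff no completion has a formula with the order property iff no formula encodes arbitrarily large finite half-graphs in $\CC$ iff (since every finite half-graph is an induced subgraph of a large enough one, and conversely) no formula encodes every finite half-graph in $\CC$. I expect the main obstacle to be purely bookkeeping: carefully stating the correspondence between membership in $\CC$ (a class of structures, presumably closed under isomorphism but the class itself may be just ``the finite structures we care about'') and satisfiability relative to $Th(\CC)$, i.e. the fact that an existential sentence holds in some member of $\CC$ iff it is consistent with $Th(\CC)$ — this requires knowing $\CC$ is closed under, say, finite substructures (it is hereditary in our application) or at least that $Th(\CC)$ is axiomatized appropriately; in the worst case I would simply work with $\CC$ as given and use that an ultraproduct of members of $\CC$ is a model of $Th(\CC)$ and, conversely, any model of a completion of $Th(\CC)$ is elementarily equivalent to such an ultraproduct, so the finitary encoding statements over $\CC$ and the existence of IP/OP in completions of $Th(\CC)$ match up exactly. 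The combinatorial fact that ``encodes every finite bipartite graph'' is equivalent to ``encodes arbitrarily large finite shattering graphs'' (and the half-graph analogue) is immediate since every finite bipartite graph on $\leq n$ vertices per side is an induced subgraph of the $n$-shattering graph, and induced subgraphs of encoded graphs are encoded.
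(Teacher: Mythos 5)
Your proof is correct and follows essentially the same route as the paper's, which simply invokes compactness in one direction and, in the other, expresses ``$\phi$ does not encode $G_\phi$ in $\CC$'' as a single sentence of $Th(\CC)$. The subtlety you worry about resolves immediately once you note that a sentence $\sigma$ is consistent with $Th(\CC)$ if and only if it holds in some member of $\CC$ (otherwise $\neg\sigma\in Th(\CC)$), so the ultraproduct detour, while valid, is not needed.
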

\begin{proof}
	$(\Ra)$ Immediate by compactness.
	
	$(\La)$ Suppose that for every partitioned formula $\phi(\xbar;\ybar)$ there is some bipartite (half) graph $G_{\phi(\xbar;\ybar)}$ that $\phi(\xbar;\ybar)$ does not encode in $\CC$. This may be expressed by a sentence for each $\phi(\xbar;\ybar)$, each of which will be in $Th(\CC)$.
	\end{proof}

Lemma \ref{lem:th(C)} suggests considering $Th(\CC)$ when given a hereditary class $\CC$. It would also be natural to view $\CC$ as corresponding to a universal theory and to consider $Th(\CC)_{\forall}$ instead, which is often an easier theory to work with as we may pass to arbitrary substructures of infinite models. Since $Th(\CC)_{\forall} \subseteq Th(\CC)$, it may have more models; for example, if $\CC$ is the class of finite linear orders, then $(\Q, <)$ is a model of $Th(\CC)_{\forall}$ but not of $Th(\CC)$, since all models of $Th(\CC)$ are discrete. Nevertheless, we shall see in Theorem \ref{thm:coll} that for deciding whether a hereditary class is (monadically) stable/NIP, it does not matter whether we consider $Th(\CC)$ or $Th(\CC)_{\forall}$.

The main results of this section will follow quickly from the next lemma showing the collapse between NIP and monadic NIP in hereditary classes.

\begin{lemma} \label{l:collNIP}
	Let $\CC$ be a hereditary class in a relational language. If $Th(\CC)_{\forall}$ is not monadically NIP, then $Th(\CC)$ is not NIP.
\end{lemma}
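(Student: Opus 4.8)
The goal is to convert the ``not monadically NIP'' hypothesis on $Th(\CC)_\forall$ into a genuine failure of NIP in $Th(\CC)$, i.e.\ (by Lemma \ref{lem:th(C)}) into a single formula encoding every finite bipartite graph in $\CC$. The natural starting point is Corollary \ref{splitexists}: after expanding by finitely many constants, we get an $L'$-structure $M' \models Th(\CC)_\forall$ carrying a partitioned, $\P$-indiscernible split configuration with a quantifier-free $\psi(\xbar,\ybar,z,\wbar)$, where as usual $\phi(\xbar,\ybar,z) := \exists\wbar\,\psi$, and the split inequalities hold: $M' \models \psi(\abar_i,\bbar_j,\cbar_{i,j})$, $M' \models \neg\phi(\abar_{i^-},\bbar_j,c_{i,j})$ for $i^- < i$, and $M' \models \neg\phi(\abar_i,\bbar_{j^+},c_{i,j})$ for $j^+ > j$. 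Because everything in sight is encoded by quantifier-free or existential formulas and the constants can be absorbed as extra coordinates in the tuples, all of this persists in a substructure of $M'$ generated by (finitely much of) the configuration, so we may assume the configuration lives in an actual member of $\CC$ (or, by compactness applied to the $\P$-indiscernible EM-type, that arbitrarily large finite approximations do).

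The heart of the argument is a combinatorial manipulation of the $I \times J$-grid of parameters $\cbar_{i,j}$ to build, from the ``diagonal-only'' behavior of the split configuration, a formula encoding an arbitrary finite bipartite graph. The model to imitate is the last claim in the proof of Theorem \ref{prop:qf op}: there, from a $\phi(\abar_i,\bbar_j,c)$ that is true at exactly one pair $(s,t)$ one gets a pre-coding configuration. Here I would argue similarly but aim directly for IP. Fix a target finite bipartite graph $G = (P,Q,E)$. Using the $\P$-indiscernibility (and Remarks \ref{P0}, \ref{duplication} to subdivide and duplicate rows/columns freely), select for each $p \in P$ a set of $I$-indices and for each $q \in Q$ a set of $J$-indices, arranged so that one can pick, for each edge-or-non-edge decision, a parameter $c$ drawn from the grid whose ``on'' pair sits in the right cell. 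Concretely: the formula $\phi$ applied to $(\abar_i,\bbar_j,c_{s,t})$ is false whenever $i < s$ or $j > t$, and true at $(s,t)$; by grouping rows into consecutive blocks $B_p$ ($p \in P$, in the $I$-order) and columns into blocks $C_q$ ($q \in Q$, in the $J$-order), the sign pattern of $\phi(\abar_i,\bbar_j,c_{s,t})$ as $(i,j)$ ranges over $B_p \times C_q$ is determined by the relative position of $(s,t)$'s block to $(p,q)$. One then combines finitely many such grid-parameters $c_{s_1,t_1},\dots,c_{s_m,t_m}$ via a boolean combination of instances of $\phi$ (equivalently, a single new formula $\phi'(\xbar;\ybar z_1 \cdots z_m)$, still a boolean combination of existential formulas, hence captured by existential sentences over $\CC$ in the style of Theorem \ref{prop:qf op}) so that the resulting formula, with $\abar_i$ on one side and $\bbar_j c_{s_1,t_1}\cdots c_{s_m,t_m}$ on the other, realizes the edge relation of $G$. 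Since $G$ was arbitrary finite bipartite and the whole construction is uniform (by $\P$-indiscernibility the block structure can be made to have any prescribed finite size), compactness and Lemma \ref{lem:th(C)} give that $Th(\CC)$ is not NIP.

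I would organize the write-up as: (i) invoke Corollary \ref{splitexists} and reduce to a configuration inside $\CC$ (or its finite approximations); (ii) a lemma stating that from a split configuration one can, for every finite bipartite $G$, find a formula (a fixed boolean combination of $\phi$-instances with the $\cbar$'s as parameters) encoding $G$ in the structure — proved by the block/cell bookkeeping above, using Remarks \ref{P0} and \ref{duplication} to get enough rows and columns and to make the blocks the right sizes; (iii) observe the encoding formula and the needed instances are expressible by existential sentences true throughout $\CC$, so by compactness plus Lemma \ref{lem:th(C)}, $Th(\CC)$ has IP.

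\textbf{Main obstacle.} The delicate point is step (ii): the split configuration only controls $\phi$ on a ``staircase'' ($\neg\phi$ strictly below-left of the diagonal point along rows, strictly above-right along columns), and says nothing about $\phi(\abar_i,\bbar_j,c_{s,t})$ for $i > s$ or $j < t$ — those signs are genuinely unconstrained. So encoding a general bipartite graph requires packaging several grid-parameters together and choosing the blocks $B_p$, $C_q$ so that, cell by cell, the combined sign pattern is forced regardless of the uncontrolled entries — essentially realizing $G$ as a sum of ``staircase'' (threshold-type) patterns, one per chosen parameter, with the blocks laid out so the uncontrolled regions of the different parameters don't collide on the cells that matter. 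Getting this bookkeeping right, and checking it only uses manipulations ($\P$-indiscernibility, passing to dense convex suborders, duplicating strips) already licensed by Remarks \ref{addstrips}, \ref{P0}, \ref{duplication}, is where the real work lies; everything else is compactness and citations to Corollary \ref{splitexists} and Lemma \ref{lem:th(C)}.
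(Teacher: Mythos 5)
Your step (i) matches the paper, but step (ii) --- the heart of the argument --- rests on a strategy that cannot succeed. You propose to encode an arbitrary finite bipartite graph $G$ by a boolean combination $\phi'(\xbar;\ybar z_1\cdots z_m)$ of instances of $\phi$, evaluated on tuples from the $\P$-indiscernible array together with grid parameters $c_{s_k,t_k}$. But $\P$-indiscernibility is exactly what defeats this: the truth value of $\phi(\abar_{i'},\bbar_{j'},c^0_{s,t})$ in $M'$ depends only on $\qftp^{\P}(i',j',\gamma_{s,t})$, i.e.\ only on the signs of $i'-s$ and of $j'-t$. (So the entries you call ``genuinely unconstrained'' are in fact constant on each of the nine sign-regions; there is no extra information for your block bookkeeping to exploit.) Consequently, for any fixed $m$ and any choice of column-parameters $\bbar_j c_{s_1,t_1}\cdots c_{s_m,t_m}$, the set $\{i : M'\models \phi'(\abar_i;\bbar_j\vec{c}\,)\}$ is a boolean combination of $m$ order-conditions on $i$, hence a union of at most $2m+1$ convex pieces of $(I,\le)$; such a family has VC dimension $O(m)$ and cannot shatter large sets. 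More conceptually, the index structure $\P$ is interpretable in two independent dense orders and is NIP, so no fixed formula evaluated on tuples drawn from a $\P$-indiscernible array can witness IP. Letting $m$ grow with $G$ does not help, since IP requires a single partitioned formula.

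The missing idea is that the wildness must come from passing to \emph{substructures}, not from the first-order behaviour of the indiscernible array --- this is where hereditariness enters. The paper fixes one formula $\theta(\xbar^*,\ybar^*):=\exists z[\phi(\xbar,\ybar,z)\wedge\neg\phi(\xbar',\ybar,z)\wedge\neg\phi(\xbar,\ybar',z)]$ on duplicated tuples $\abar^*_i=\abar_{i-1/4}\abar_i$, $\bbar^*_j=\bbar_j\bbar_{j+1/4}$, and encodes $G=(S,T;E)$ by the choice of a finite substructure $M'_G\subseteq M'$ that contains the witnesses $\cbar_{r,s}$ only over cells corresponding to edges of $G$. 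Since $\phi$ is existential, omitting witnesses falsifies the positive conjunct of $\theta$ at non-edges, while the universal conjuncts $\neg\phi$ persist downward. The genuinely delicate point is then the one your outline does not address at all: showing that no \emph{accidental} witness $e^0\ebar$ to $\psi(\abar_i,\bbar_j,\cdot)$ survives in $M'_G$ when $(i,j)\notin E$. The paper handles this by padding each integer index with a finite block $P(j)$ of rationals and building order-preserving partial shifts $g'$ (fixing everything outside $P(j)$) that transport any such witness to a witness of $\phi(\abar_i,\bbar_{j+1/4},\cdot)$, contradicting $\theta$. Without this change of mechanism your proof cannot be completed.
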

\begin{proof}
	Suppose $Th(\CC)_{\forall}$ is not monadically NIP, and so by Corollary~\ref{splitexists} there is an expansion $L'\supseteq L$ by finitely many constants,
	an $M'\models Th(\CC)_{\forall}$, a quantifier-free $L'$-formula $\psi(\xbar,\ybar,z,\wbar)$ and a partitioned, $\P$-indiscernible split configuration indexed by
	$\{\abar_i:i\in \Q^-\}$, $\{\bbar_j:j\in\Q^+\}$ and $\{\cbar_{i,j}: i \in \Q^-, j \in \Q^+\}$.    Let $\phi(\xbar,\ybar,z)$ denote $\exists\wbar\psi(\xbar,\ybar,z,\wbar)$.
	For each integer $i\in \Z^-$, put $\abar^*_i:=\abar_{i-1/4}\abar_i$ and, for each $j\in \Z^+$, put $\bbar^*_j:=\bbar_j\bbar_{j+1/4}$.  
	Also, put $\xbar^*:=\xbar'\xbar$ and $\ybar^*:=\ybar\ybar'$ and put
	$$\theta(\xbar^*,\ybar^*):=\exists z[\phi(\xbar,\ybar,z)\wedge\neg\phi(\xbar',\ybar,z)\wedge\neg\phi(\xbar,\ybar',z)]$$
	
	We will show that $Th(\CC)$ is not NIP by showing that for every finite, bipartite graph $G=(S,T;E)$ there is a finite substructure $M_G'\subseteq M'$ encoding $G$ via 
	the $L'$-formula $\theta(\xbar^*,\ybar^*)$, which suffices by Lemma \ref{lem:th(C)}.

Let $m:= |\cbar_{i,j}|+1$. For every integer $j\in\Z$, let $P(j):=\{j+{\frac{k}{ 4m}}:-2m<k<2m\}$, which is a finite set of rationals in the interval $(j-1/2,j+1/2)$.
	For disjoint finite sets of integers $S\subseteq \Z^-$,  $T\subseteq \Z^+$, let $I_{ST}:=\bigcup\{P(j):j\in S\cup T\}$.  Clearly, if $j<j'$ are integers, then $P(j)\ll P(j')$.
	The salient feature of $I_{ST}$ is that if $j\in S\cup T$ and $Z\subseteq P(j)$ is of size $\le m$ with $j \in Z$, then there are order-preserving functions $g,h\colon Z\rightarrow P(j)$
	such that $g(j)=j+1/4$ and $h(j)=j-1/4$. 
	
	Let  $M'_\emptyset$ be the finite substructure of $M'$ with universe
	$$\pbar\cup\{\abar_s:s\in I_{ST}, s<0\}\cup\{\bbar_t:t\in I_{ST}, t>0\}$$
	where $\pbar$ is the finite tuple of $M'$ named by constants
	and let
	$M'_{ST}$ be the finite substructure of $M'$ with universe 
	$$M'_\emptyset\cup \{\cbar_{s,t}:s\in\bigcup_{i\in S} P(i),t\in\bigcup_{j\in T} P(j)\}$$
	Let $j  \in T$, $Z \subset P(j)$, and $g \colon Z \rightarrow P(j)$ be as above, i.e. $g$ is order-preserving and $g(j) = j+1/4$. Let $Z' \subset M'_{ST}$ consist of all points with (all) indices in $Z \cup (I_{ST}\bs P(j))$. Then we define $g'\colon Z' \to M'_{ST}$ as follows.
	\begin{itemize}
	\item  If $v\in\bbar_z$ for $z \in Z$, then $g'(v)$ is the associated element of $\bbar_{g(z)}$.
	\item  If $v\in\cbar_{r,z}$ for $z \in Z$, then $g'(v)$ is the associated element of $\cbar_{r,g(z)}$.
	\item Otherwise, $g'(v) = v$ (noting that $g(z)=z$).
	\end{itemize}
Coupled with the $\P$-indiscernibility,  this yields that for any $m$-tuple
$\vbar$, $\tp^{M'}(\vbar)=\tp^{M'}(g'(\vbar))$ for any of the functions $g'\colon Z' \to M'_{ST}$ described above.

Now, for any bipartite graph $G=(S,T;E)$, let $M'_G\subseteq M'_{ST}$ be the substructure with universe 
$$M'_\emptyset\cup\bigcup_{(i,j)\in E}\{\cbar_{r,s}:r\in P(i),s\in P(j)\}$$

Note that for any $g'$ as above, its restriction to $M'_G$ has its image contained within $M'_G$. Clearly, $M'_G$ need not be $\P$-indiscernible for all formulas, but it will
be $\P$-indiscernible for quantifier-free formulas, and so for any $m$-tuple $\vbar$ in $M'_G$,  $\qftp^{M'_G}(\vbar) = \qftp^{M'_G}(g'(\vbar))$.

	\begin{claim*}
		$M'_G \models \theta(\abar_i^*, \bbar_j^*)$ if and only if $(i,j)$ is an edge in $G$.
	\end{claim*}
	\begin{claimproof}
		If $(i,j)$ is an edge in $G$, then $\cbar_{i,j} \subset M'_G$, so $\cbar^0_{i,j}$ witnesses the outermost existential in $\theta$, with the rest of $\cbar_{i,j}$ witnessing the existentials in $\phi(\abar_i, \bbar_j, c_{i,j})$. Since $M' \models \neg \phi(\abar_{i - 1/4}, \bbar_j, \cbar^0_{i,j}) \wedge \neg \phi(\abar_{i}, \bbar_{j+1/4}, \cbar^0_{i,j})$ and $\neg \phi$ is universal, this is true in $M'_G$ as well.
		
		Now suppose $(i,j)$ is not an edge in $G$. Suppose there are $e^0 \in M'_G$ and $\ebar \subset M'_G$ such that $M'_G \models \psi(\abar_i, \bbar_j, e^0\ebar)$. 
		To show $M'_G\models\neg\theta(\abar_i,\bbar_j)$ it suffices to find some  $\ebar' \subset M'_G$ such that either $M'_G \models \psi(\abar_{i-1/4}, \bbar_j, e^0\ebar')$ or $M'_G \models \psi(\abar_i, \bbar_{j+1/4}, e^0\ebar')$.   Note that $e^0\in M'_G$ has at most two indices, and since $(i,j)\not\in E$, they cannot be in both $P(i)$ and $P(j)$.
		For definiteness, suppose no index of $e^0$ is in $P(j)$.  Let $Z\subseteq P(j)$ be the indices in $\bbar_j\ebar$ contained in $P(j)$, and note $|Z| \leq m$. Let $Z'$ and $g' \colon Z' \to M'_{ST}$ be as above; so $g'$ is order-preserving on the indices of points, sends points with index $j$ to $j + 1/4$, and fixes points whose indices are outside $P(j)$. Thus $g'(\abar_i\bbar_je^0\ebar)=(\abar_i\bbar_{j+1/4}e^0g'(\ebar))$. By the paragraph before this Claim, $g'(\ebar) \subset M'_G$ and $M'_G\models\psi(\abar_i,\bbar_{j+1/4},e^0g'(\ebar))$, which suffices.
	\end{claimproof}
\end{proof}

\begin{theorem} \label{thm:coll}
	Let $\P \in \set{\textrm{stable, NIP}}$. Let $\CC$ be a hereditary class of relational structures. Then the following are equivalent.
	\begin{enumerate}
		\item $Th(\CC)_{\forall}$ is monadically $\P$.
		\item $Th(\CC)$ is monadically $\P$.
		\item $Th(\CC)_{\forall}$ is $\P$.
		\item $Th(\CC)$ is $\P$.
	\end{enumerate}
\end{theorem}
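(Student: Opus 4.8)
The plan is to deduce Theorem~\ref{thm:coll} by combining Lemma~\ref{l:collNIP}, Proposition~\ref{p:mon univ}, and the already-established implications between the properties, handling the stable case via NIP together with Theorem~\ref{prop:qf op}. The obvious trivial implications are $(2)\Ra(1)$ (monadic $\P$ passes to $T_\forall$ by Proposition~\ref{p:mon univ}), $(1)\Ra(3)$ and $(2)\Ra(4)$ (monadically $\P$ implies $\P$), and $(3)\Ra(4)$ would need care since $Th(\CC)_\forall$ has possibly more completions than $Th(\CC)$; but actually the cleanest cycle avoids $(3)\Ra(4)$ directly. So first I would establish $(1)\Ra(2)$: this is exactly Proposition~\ref{p:mon univ}, since $Th(\CC)_\forall$ and $Th(\CC)$ have the same universal part, so one is monadically $\P$ iff the other is. That already gives $(1)\Lra(2)$, and each implies $(3)$ and $(4)$ respectively.

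The substantive content is closing the loop from $(4)$ (or $(3)$) back to $(1)$, i.e.\ proving that if $Th(\CC)_\forall$ is \emph{not} monadically $\P$, then $Th(\CC)$ is not $\P$. For $\P = \mathrm{NIP}$ this is precisely Lemma~\ref{l:collNIP}, applied with $L$ the relational language of $\CC$. For $\P = \mathrm{stable}$, I would argue as follows. If $Th(\CC)_\forall$ is not monadically stable, there are two cases. If $Th(\CC)_\forall$ is also not monadically NIP, then by Lemma~\ref{l:collNIP} $Th(\CC)$ is not NIP, hence not stable, and we are done. Otherwise $Th(\CC)_\forall$ is monadically NIP but not monadically stable, so Theorem~\ref{prop:qf op} applies (with $T = Th(\CC)_\forall$, which is universal): there is a single partitioned atomic formula $\alpha(\xbar;\ybar)$ witnessing the order property in every completion $T'$ with $T'_\forall \subseteq Th(\CC)_\forall$. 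Since $Th(\CC)_\forall \subseteq Th(\CC)$, taking any completion $T'$ of $Th(\CC)$ gives $T'_\forall = Th(\CC)_\forall$, so $\alpha$ witnesses the order property in that completion; hence $Th(\CC)$ is not stable.

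Putting this together: I would write the proof as a short cycle. $(2)\Ra(1)$ and $(1)\Ra(2)$ are Proposition~\ref{p:mon univ}. $(2)\Ra(4)$ and $(1)\Ra(3)$ are immediate since monadically $\P$ implies $\P$. Finally $(3)\Ra(1)$ and $(4)\Ra(1)$ follow by contraposition from Lemma~\ref{l:collNIP} (for NIP) together with Theorem~\ref{prop:qf op} and Lemma~\ref{l:collNIP} again (for stability), using that $Th(\CC)_\forall \subseteq Th(\CC)$ so that $\P$ of either implies $\P$ of $Th(\CC)$ is equivalent to checking completions, whose universal parts coincide. Concretely, for $(4)\Ra(1)$: suppose $(1)$ fails; if $Th(\CC)_\forall$ is not monadically NIP then Lemma~\ref{l:collNIP} kills NIP, hence stability, of $Th(\CC)$; if it is monadically NIP but not monadically stable, Theorem~\ref{prop:qf op} kills stability of $Th(\CC)$; either way $(4)$ fails. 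The implication $(3)\Ra(1)$ is then free: if $Th(\CC)_\forall$ is $\P$ but not monadically $\P$, the same argument produces a formula witnessing the failure of $\P$ in a completion of $Th(\CC)_\forall$ whose universal part is $Th(\CC)_\forall$, contradicting $Th(\CC)_\forall$ being $\P$ (in the NIP case directly, in the stable case via Theorem~\ref{prop:qf op}).

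The main obstacle I anticipate is purely bookkeeping about completions: $Th(\CC)_\forall$ may have many completions, and ``$\P$'' and ``monadically $\P$'' for incomplete theories quantify over all of them (Definition~\ref{def:prop}), so I must be careful that the configurations produced by Lemma~\ref{l:collNIP} and Theorem~\ref{prop:qf op} genuinely land inside (a completion of) $Th(\CC)$ rather than merely $Th(\CC)_\forall$ — which works because $Th(\CC)_\forall\subseteq Th(\CC)$ means every completion of $Th(\CC)$ restricts to $Th(\CC)_\forall$ on universal sentences, and the failure of NIP/stability is witnessed by (existential over $\forall$) sentences about encoding graphs or half-graphs that hence transfer. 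No genuinely new ideas are needed beyond the cited results.
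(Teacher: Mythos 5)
Your skeleton matches the paper's: $(1)\Lra(2)$ is Proposition \ref{p:mon univ}, the downward implications are trivial, and the substance is $(4)\Ra(1)$ by contraposition, with Lemma \ref{l:collNIP} doing all the work in the NIP case. That part is correct. For the stable case you diverge from the paper: the paper transfers ``not monadically stable'' from $Th(\CC)_\forall$ to $Th(\CC)$ by Proposition \ref{p:mon univ} and then quotes the fact from \cite{BS} that a monadically NIP, non-monadically-stable theory is already unstable (the non-monadically-NIP subcase reducing to Case A), whereas you route through Theorem \ref{prop:qf op}. That route can be made to work, but your execution of it contains a false step.

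The false step is the claim that any completion $T'$ of $Th(\CC)$ satisfies $T'_\forall = Th(\CC)_\forall$. A completion of $Th(\CC)$ is $Th(M)$ for a single $M\models Th(\CC)$, and $Th(M)_\forall$ typically properly contains $Th(\CC)_\forall$. For instance, with $\CC$ the class of finite linear orders (which is monadically NIP but not monadically stable), the one-point structure is a model of $Th(\CC)$, and the universal part of its theory contains ``there is at most one element''; that completion is stable, so your stated conclusion that $\alpha$ has the order property in \emph{every} completion of $Th(\CC)$ is also false, and Theorem \ref{prop:qf op} does not apply to such $T'$ since its hypothesis $T'_\forall\subseteq Th(\CC)_\forall$ fails. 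Fortunately you only need \emph{some} completion of $Th(\CC)$ to be unstable (Definition \ref{def:prop}), and the repair is immediate: apply Theorem \ref{prop:qf op} to $T'=Th(\CC)$ itself, whose universal part is exactly $Th(\CC)_\forall$ by definition. (Equivalently: the existential sentences asserting arbitrarily large $\alpha$-half-graphs are consistent with $Th(\CC)_\forall$, hence realized in finite models of $Th(\CC)_\forall$, which lie in $\CC$ by heredity and relationality, so these sentences are jointly consistent with $Th(\CC)$.) With that one sentence corrected, your argument is sound, though the paper's appeal to \cite{BS} is shorter and avoids Theorem \ref{prop:qf op} entirely.
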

\begin{proof}
	The equivalence of $(1)$ and $(2)$ is by Proposition \ref{p:mon univ}, and clearly $(1) \Ra (3) \Ra (4)$. So it only remains to show $(4) \Ra (1)$. We will show the contrapositive in each case.
	
	Case A: $\P =$ NIP. This is Lemma \ref{l:collNIP}.
	
	Case B: $\P =$ stable. Suppose $Th(\CC)_{\forall}$ is not monadically stable, and so neither is $Th(\CC)$, by Proposition \ref{p:mon univ}. By Theorem \ref{thm:qf op}, either $Th(\CC)$ is not monadically NIP and thus not even NIP by Case A, or it is monadically NIP but unstable. In either case, we are finished.
\end{proof}

\begin{corollary}  \label{stengthen2.5}
Let $T$ be a (possibly incomplete) theory in a relational language, and let $\PP \in \set{\textrm{stable, NIP}}$. Then $T$ is monadically $\PP$ if and only if $T_{\forall}$ is $\PP$.
\end{corollary}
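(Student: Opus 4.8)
The plan is to deduce this corollary directly from Theorem \ref{thm:coll} by applying it to the hereditary class $\CC$ generated by $T_\forall$, since Theorem \ref{thm:coll} is exactly this statement but phrased for hereditary classes of finite structures rather than for an arbitrary (possibly infinite-model-only) universal theory. First I would set $\CC := \mathrm{Age}(\bigcup\{\,M : M \models T_\forall\,\})$ — more concretely, let $\CC$ be the class of all finite models of $T_\forall$ in the given relational language. Since the language is relational, $\CC$ is a hereditary class of finite structures, and a standard compactness argument shows $Th(\CC)_\forall = T_\forall$: every universal sentence true in all finite models of $T_\forall$ is true in all models of $T_\forall$ (a countermodel would have a finite substructure witnessing the failure, and that substructure is a finite model of $T_\forall$), and conversely $T_\forall \subseteq Th(\CC)_\forall$ is immediate. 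Hence $(T_\forall)$ is $\PP$ iff $Th(\CC)_\forall$ is $\PP$.

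Next I would invoke the chain of equivalences in Theorem \ref{thm:coll}: $Th(\CC)_\forall$ is monadically $\PP$ $\iff$ $Th(\CC)_\forall$ is $\PP$ (equivalence of $(1)$ and $(3)$ there). Combining this with the identification $Th(\CC)_\forall = T_\forall$, we get that $T_\forall$ is $\PP$ iff $T_\forall$ is monadically $\PP$. Finally, Proposition \ref{p:mon univ} gives that $T$ is monadically $\PP$ iff $T_\forall$ is monadically $\PP$. Chaining these: $T$ is monadically $\PP$ $\iff$ $T_\forall$ is monadically $\PP$ $\iff$ $T_\forall$ is $\PP$, which is exactly the claim.

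The only step requiring genuine care is verifying $Th(\CC)_\forall = T_\forall$ for $\CC$ the class of finite models of $T_\forall$ — in particular that $\CC$ is nonempty and that its finite models capture all universal consequences. Nonemptiness is fine since the language is relational (the one-element substructure of any model of $T_\forall$, or indeed any finite substructure, is itself a model of $T_\forall$ by Fact \ref{f:univ sub} and the downward closure of universal sentences), and the capturing property is the routine compactness/locality argument sketched above: if a universal sentence $\forall\xbar\,\varphi(\xbar)$ with $\varphi$ quantifier-free fails in some $M \models T_\forall$, pick a witnessing tuple $\mbar$; the finite substructure generated by $\mbar$ lies in $\CC$ and still refutes the sentence. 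I would keep this verification to a sentence or two, as it is standard, and present the corollary as a two-line consequence of Theorem \ref{thm:coll} and Proposition \ref{p:mon univ}.
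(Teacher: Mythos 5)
Your proposal is correct and follows essentially the same route as the paper: reduce via Proposition \ref{p:mon univ}, take $\CC$ to be the hereditary class of finite models of $T_\forall$ (so that $Th(\CC)_\forall = T_\forall$, using that the language is relational), and apply Theorem \ref{thm:coll}. The paper states this in three lines without spelling out the verification of $Th(\CC)_\forall = T_\forall$, which you correctly supply.
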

\begin{proof}
	From Proposition \ref{p:mon univ}, it suffices to show that if $T_\forall$ is $\PP$ then $T_\forall$ is monadically $\PP$. Let $\CC$ be the hereditary class of finite models of $T_\forall$, so $Th(\CC)_\forall = T_\forall$. The result now follows from Theorem \ref{thm:coll}.
\end{proof}

The following definition is analogous to monotone graph classes,  which are closed under (not-necessarily-induced) subgraph, or equivalently under the removal of vertices and edges.

\begin{definition} \label{def:mono}
	A {\em monotone class} $\CC$ is a hereditary class with the additional property that for every $M \in \CC$, every structure obtained from $M$ by removing instances of atomic relations (other than equality and non-equality) is still in $\CC$.
\end{definition}

Note that this definition is not exactly a generalization of monotone graph classes, since for graphs the edge relation must be symmetric, so if we remove the relation $E(a,b)$ we must also remove $E(b,a)$. Placing such additional symmetry constraints on the relations of $\CC$ would not change the proof of the next theorem.

We now generalize part of the main result of \cite{AdAd} from graphs to relational structures, answering part of \cite[Problem 5.1]{Spars}.

\begin{theorem} \label{thm:coll mon}
	Let $\CC$ be a monotone class of relational structures. Then $Th(\CC)$ is NIP if and only if $Th(\CC)$ is monadically stable.
\end{theorem}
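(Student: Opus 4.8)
The plan is to prove the nontrivial direction: if $Th(\CC)$ is NIP then it is monadically stable. Equivalently, working by contraposition via Theorem \ref{thm:coll}, it suffices to show that if $Th(\CC)_\forall$ is not monadically stable, then $Th(\CC)$ is not NIP. By Theorem \ref{thm:coll} (Case A/Lemma \ref{l:collNIP}) we already know that if $Th(\CC)_\forall$ is not monadically NIP then $Th(\CC)$ is not NIP. So the remaining case is: $Th(\CC)_\forall$ is monadically NIP but not monadically stable. By Theorem \ref{prop:qf op} applied to $T := Th(\CC)_\forall$ (a universal theory that is monadically NIP but not monadically stable), there is a partitioned atomic formula $\alpha(\xbar;\ybar)$ witnessing the order property in every completion — in fact witnessing the order property via a sequence of existential sentences asserting that $\alpha$ defines arbitrarily large half-graphs, so that these sentences lie in $Th(\CC)_\forall \subseteq Th(\CC)$. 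Thus $\alpha$ defines arbitrarily large finite half-graphs in members of $\CC$, and by Lemma \ref{lem:th(C)} (the half-graph characterization of stability for classes) $Th(\CC)$ is unstable, hence not stable — but I need more: I need IP, i.e. that some formula encodes \emph{every} finite bipartite graph in $\CC$.

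The key step is therefore to exploit monotonicity: the atomic formula $\alpha(\xbar;\ybar)$ is (up to the partition of variables) a positive boolean combination of atomic relations $R(\bar z)$, and in a monotone class we are free to delete instances of such relations. First I would reduce to the case where $\alpha$ is a single atomic relation $R$ applied to variables among $\xbar\ybar$ with at least one $\xbar$-variable and at least one $\ybar$-variable genuinely occurring — a conjunction of atomics can be handled by noting that encoding a half-graph with a conjunction lets one delete tuples to kill unwanted conjuncts; more carefully, I'd take the half-graph witnesses $(\abar_i : i \le n)$, $(\bbar_j : j \le n)$ with $M \models \alpha(\abar_i;\bbar_j) \iff i \le j$, and observe that since $\alpha$ is atomic (a single relation symbol up to substitution of variables/constants), the set of pairs $(i,j)$ with the relation holding is exactly $\{(i,j) : i \le j\}$, and then use monotonicity to pass to a substructure of $M$ in which an \emph{arbitrary} prescribed subset of these pairs has the relation deleted.

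The heart of the argument is the following: given a finite bipartite graph $G = (S,T;E)$, I want an $M_G \in \CC$ and tuples $(\abar_i : i \in S)$, $(\bbar_j : j \in T)$ with $\alpha(\abar_i;\bbar_j)$ holding iff $(i,j) \in E$. Take $n = |S| + |T|$ (or large enough), take $M \in \CC$ with a half-graph of size $n$ on tuples $(\dbar_k : k \le n)$ so that $\alpha(\dbar_k;\dbar_\ell) \iff k \le \ell$; index $S$ and $T$ by disjoint sets of indices so that every $i \in S$ is below every $j \in T$ in the half-graph order (i.e. send $S$ to small indices, $T$ to large indices) — then without deletions we'd get the complete bipartite graph between $S$ and $T$. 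Now for each non-edge $(i,j) \notin E$ with $i \in S$, $j \in T$, the relation $\alpha$ currently holds on the corresponding tuple; since $\alpha$ is atomic and the class is monotone, delete precisely those atomic instances to obtain $M_G \in \CC$ in which $\alpha(\abar_i;\bbar_j)$ holds iff $(i,j) \in E$. This encodes $G$ via $\alpha$ in $\CC$; since $G$ was arbitrary, Lemma \ref{lem:th(C)} gives that $Th(\CC)$ is not NIP, as desired.

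The main obstacle I anticipate is the bookkeeping around \emph{which} atomic formula $\alpha$ is and how deleting \emph{one} atomic instance interacts with the requirement that the deletion affects exactly the intended pair $(\abar_i,\bbar_j)$ and no other pair in the configuration: if the same relation instance is "shared" between different index pairs (e.g. because $\alpha$ identifies some coordinates of $\xbar$ and $\ybar$, or the tuples overlap), a single deletion could destroy wanted edges too. To handle this cleanly I would first invoke the structure provided by Theorem \ref{prop:qf op} together with a split/pre-coding–style configuration and pass, via the $\P$-indiscernible machinery of Section \ref{sec:prelim} (Lemma \ref{useRamsey}, Corollary \ref{splitexists}), to a \emph{partitioned} $\P$-indiscernible configuration in which the tuples $\abar_i$, $\bbar_j$, and the relevant relation instances are pairwise disjoint; disjointness guarantees that deleting the atomic instance living on $(\abar_i,\bbar_j)$ does not touch any $(\abar_{i'},\bbar_{j'})$ with $(i',j')\ne(i,j)$. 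Modulo that normalization step, the argument is the deletion construction above, and the conclusion follows from Lemma \ref{lem:th(C)} and Theorem \ref{thm:coll}.
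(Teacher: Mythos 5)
Your proposal is correct and follows essentially the same route as the paper: reduce via Theorem \ref{thm:coll} to the case where $Th(\CC)_{\forall}$ is monadically NIP but unstable, extract an atomic order-property formula from Theorem \ref{prop:qf op}, arrange the half-graph so that it yields a complete bipartite graph between the $\abar_i$ and the $\bbar_j$, and use monotonicity to delete atomic instances and thereby encode arbitrary bipartite graphs (the paper deletes instances to produce the random bipartite graph, which is the same idea). Your worry about a single deletion affecting several index pairs is a legitimate fine point that the paper elides; it can be handled as you suggest, or more lightly by noting that an atomic formula witnessing the order property must genuinely involve coordinates from both $\xbar$ and $\ybar$ and that the witnessing tuples may be taken pairwise disjoint, so distinct pairs $(i,j)$ correspond to distinct relation instances.
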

\begin{proof}
	The backward direction is immediate. For the forward direction, by Theorem \ref{thm:coll} it suffices to show that if $Th(\CC)_{\forall}$ is monadically NIP then it is stable. So suppose it is monadically NIP but unstable, and we will create a contradiction by showing it is not NIP. By  Theorem \ref{thm:qf op}, there is an atomic formula $\phi(\xbar; \ybar)$ with the order property in $Th(\CC)_{\forall}$. Let $(\abar_i : i \in \Z)$, $(\bbar_j : j \in \Z)$ be in $M \models Th(\CC)_{\forall}$ such that $M \models \phi(\abar_i; \bbar_j) \iff i < j$.
	So $\phi$ defines a complete bipartite graph on $\set{\abar_i | i \in \Z^-} \times \set{\bbar_j | j \in \Z^+}$. As $\CC$ is monotone and $\phi$ is atomic, we may produce a model of $Th(\CC)_{\forall}$ by removing instances of $\phi$ so the remaining instances define the random bipartite graph. Thus $Th(\CC)_{\forall}$ has IP.
\end{proof}

Finally, we show that our definition of monadic NIP/stability in Definition \ref{def:prop} corresponds to the definition usually used in the finite combinatorics literature for a class of structures. That definition is in terms of transductions, and amounts to $(3)$ in the following proposition.

\begin{proposition} \label{prop:findef}
   Let $\CC$ be a class of structures. The following are equivalent.
    \begin{enumerate}
        \item $\CC$ is monadically NIP (resp. monadically stable).
        \item Every class $\CC^+$ obtained by expanding the structures in $\CC$ by unary predicates is NIP (resp. stable).
        \item No class $\CC^+$ obtained by expanding the structures in $\CC$ by unary predicates has a formula on singletons encoding all finite bipartite graphs (resp. all half graphs).
    \end{enumerate}
\end{proposition}
\begin{proof}
Since all of these properties are preserved by closing under substructure, we may assume $\CC$ is hereditary. We only consider the case of NIP, since the argument for stability is essentially identical.

	$(1) \Ra (2)$ Suppose $Th(\CC)$ is monadically NIP and let $\CC^+$ be as described. Let $M^+ \models Th(\CC^+)$, and let $M$ be its reduct to the original language. Then $M \models Th(\CC)$, and so is monadically NIP, and thus $M^+$ is NIP.
	
	$(2) \Ra (1)$ Suppose $Th(\CC)$ is not monadically NIP. Then there is $M \models Th(\CC)$ admitting a unary expansion $M^+$ that has IP. Let $\CC^+ := Age(M^+)$, so $\CC^+$ is contained in a unary expansion of $\CC$. Then $M^+ \models Th(\CC^+)_{\forall}$ has IP, and thus $Th(\CC^+)$ has IP by Theorem \ref{thm:coll}.

    $(2) \Ra (3)$ This is immediate from Lemma \ref{lem:th(C)}.

    $(3) \Ra (2)$ Suppose $(2)$ fails, and let $\CC^+$ be a unary expansion of $\CC$ with IP. Thus there is some theory $T^+$ a completion of $Th(\CC^+)$ with IP. By \cite{SimonNote}, this is witnessed by a formula $\phi(x;y)$ on singletons with parameters. Let $T^{++}$ be the theory obtained by expanding by constants for these parameters. Then there is some expansion $\CC^{++}$ of $\CC^+$ by constants so that $T^{++}$ is a completion of $Th(\CC^{++})$. By (the proof of) Lemma \ref{lem:th(C)}, $\phi$ encodes all finite bipartite graphs in $\CC^{++}$. Finally, the constants used for the parameters of $\phi$ can be replaced with unary predicates, and the formula $\phi$ adjusted accordingly, yielding a failure of $(3)$.
\end{proof}

\section{Many models} \label{sec:growth}

We close with an application to the finite combinatorics of hereditary classes. We show that if a class $\CC$ of finite relational structures is not monadically NIP then it has superexponential growth rate in the following sense, removing the hypothesis of quantifier elimination from a result in \cite{MonNIP}.

\begin{definition}
	Let $\CC$ be a hereditary class. The {\em (unlabeled) growth rate of $\CC$} is the function $f_{\CC}(n)$ counting the number of isomorphism types in $\CC$ with $n$ elements.
\end{definition}

The class of all graphs with degree at most five is monadically NIP (in fact mutually algebraic, and thus monadically NFCP \cite{MA}) and has labeled growth rate $\Omega(n^{5n/2})$\cite[Formula (6.6)]{Noy}, so its unlabeled growth rate is $\Omega(n^{3n/2})$. Thus the converse to our theorem does not hold, and monadic NIP does not separate classes according to their growth rates, since this example's unlabeled growth rate is faster than that of the class of permutations viewed as structures with two linear orders, which is not monadically NIP. However, under the additional assumption that $\CC$ is the class of finite substructures of an $\omega$-categorical structure, we have conjectured that monadic NIP implies $f_\CC(n)$ is at most exponential \cite[Conjecture 1]{MonNIP}.

Fast growth rate will be a quick consequence of the following non-structure result, which we isolate for possible further applications. Like Lemma \ref{l:collNIP}, we will be encoding bipartite graphs in finite structures, but we allow ourselves to expand the language by unary predicates. Using this, the sets of tuples on which we encode bipartite graphs will be made definable, which then allows us to define the graphs on singletons. This much was already shown in \cite[Theorem 8.1.8]{BS}, but our approach allows us to additionally bound the size of the structure we are using to define a given graph.

\begin{proposition} \label{prop:transd}
	Let $\CC$ be a hereditary class in a relational language $\LL$. If $Th(\CC)$ is not monadically NIP, then there is an expansion $\LL^* \supset \LL$ by finitely many unary predicates and a corresponding expansion $\CC^*$ of $\CC$,  and $\LL^*$-formulas $U^*(x)$, $V^*(x)$, and $E^*(x, y)$ on singletons such that for every finite bipartite graph $G = (U,V; E)$, there is $M^*_G \in \CC^*$ such that $G \cong (U^*(M^*_G), V^*(M^*_G); E^*(M^*_G))$ and $|M^*_G| = O(|U|+ |V| + |E|)$.
\end{proposition}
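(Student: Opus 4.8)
The plan is to combine Lemma~\ref{l:collNIP} (or rather Corollary~\ref{splitexists}, which underlies it) with the idea already used implicitly there: replace tuples by singletons at the cost of naming finitely many unary predicates, while keeping the encoding structures linear in the size of the graph. Since $Th(\CC)$ is not monadically NIP, $Th(\CC)_\forall$ is not monadically NIP (Proposition~\ref{p:mon univ}), so by Corollary~\ref{splitexists} there is a finite constant expansion $L'$, a model $M' \models Th(\CC)_\forall$, a quantifier-free $\psi(\xbar,\ybar,z,\wbar)$, and a partitioned $\P$-indiscernible split configuration $\A = \set{\abar_i : i \in I} \cup \set{\bbar_j : j \in J} \cup \set{\cbar_{i,j} : (i,j) \in I \times J}$. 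I will then follow the proof of Lemma~\ref{l:collNIP} verbatim to get, for each finite bipartite graph $G = (S,T;E)$, a finite substructure $M'_G$ with $|M'_G| = O(|S| + |T| + |E|)$ (the bound is visible from the construction there: $M'_\emptyset$ has size $O((|S|+|T|) \cdot m)$ with $m$ a fixed constant, and we add $O(m^2)$ elements per edge), such that the $L'$-formula $\theta(\xbar^*, \ybar^*) = \exists z[\phi(\xbar,\ybar,z) \wedge \neg\phi(\xbar',\ybar,z) \wedge \neg\phi(\xbar,\ybar',z)]$ encodes $G$ on the tuples $\set{\abar^*_i}$ and $\set{\bbar^*_j}$.

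Now the two remaining tasks are (i) to get rid of the finitely many constants in $L'$, and (ii) to collapse the tuples $\abar^*_i, \bbar^*_j$ to singletons. For (i), note $\pbar$ (the interpretations of the constants) is a single fixed finite tuple lying in every $M'_G$; I name it by a fresh unary predicate $P$, add one more unary predicate distinguishing a well-ordering of $P$'s elements if needed (or simply observe that, $\theta$ being a formula in finitely many specified constants, each constant can be replaced by a variable bound by a $P$-restricted existential together with a unary predicate pinning down which element it is --- but cleanest is to add unary predicates $P_1, \dots, P_r$ each naming exactly one of the $r$ constants). For (ii), I use the standard trick: the strips of the $\P$-indiscernible partition are already named by unary predicates by Remark~\ref{addstrips}, so in $M'_G$ the union of the coordinates making up the $\abar^*_i$'s is definable by a disjunction $U^*(x)$ of finitely many strip-predicates, and similarly $V^*(x)$ for the $\bbar^*_j$ side; since within a strip the map $i \mapsto (\ell\text{-th coordinate of }\abar^*_i)$ is a bijection onto the strip and all strips of $\set{\abar^*_i}$ are indexed by the same $I$, each point of $U^*(M'_G)$ determines its "row" $i$, and I can define $E^*(x,y)$ by quantifying over the remaining coordinates of the two tuples containing $x$ and $y$ (again cut out by the strip predicates) and asserting $\theta$. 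This needs a little care so that $E^*$ restricted to $U^* \times V^*$ is exactly the bipartite graph $G$ rather than something coarser; I make each $\abar^*_i$ a $U^*$-class of fixed size $m_{U^*}$ and blow up $G$ to $G' = G[m_{U^*}, m_{V^*}]$ (each vertex replaced by that many copies, adjacency lifted), encode $G'$ via the previous paragraph, and then take $U^*(x)$ to pick out only the first coordinate of each $\abar^*_i$ --- so the displayed graph is literally $G$, and the size is still $O(|S| + |T| + |E|)$ since $m_{U^*}, m_{V^*}$ are absolute constants.

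I expect the main obstacle to be purely bookkeeping: threading the linear size bound through the blow-up and making sure that the singleton formulas $U^*, V^*, E^*$ define \emph{exactly} $G$ and not a blown-up or quotiented version, including handling the degenerate cases where $S$ or $T$ or $E$ is empty (so that the $O(\cdot)$ bound is honest --- one should really write $|M^*_G| = O(|S| + |T| + |E| + 1)$ or note $M'_\emptyset$ is a fixed finite structure). A secondary point: the strips coming from Remark~\ref{addstrips} distinguish the $I$-strips, $J$-strips, and $\Gamma$-strips, but I need to further distinguish, within the $\abar^*_i = \abar_{i-1/4}\abar_i$ construction, the "first half" from the "second half" coordinates so that $\xbar$ and $\xbar'$ in $\theta$ can be recovered definably; this is just adding a few more unary predicates, all still finitely many.

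\begin{proof}[Proof sketch]
Since $Th(\CC)$ is not monadically NIP, neither is $Th(\CC)_\forall$ by Proposition~\ref{p:mon univ}, so Corollary~\ref{splitexists} applies: there is an expansion $L' \supseteq \LL$ by finitely many constants $\bar p$, an $L'$-structure $M' \models Th(\CC)_\forall$, a $0$-definable quantifier-free $\psi(\xbar,\ybar,z,\wbar)$, and a partitioned $\P$-indiscernible split configuration $\A$ as in that corollary. Running the construction in the proof of Lemma~\ref{l:collNIP} produces, for each finite bipartite graph $G=(S,T;E)$, a finite substructure $M'_G \subseteq M'$ encoding $G$ via the $L'$-formula $\theta(\xbar^*,\ybar^*) = \exists z[\phi(\xbar,\ybar,z)\wedge\neg\phi(\xbar',\ybar,z)\wedge\neg\phi(\xbar,\ybar',z)]$ on the tuples $\abar^*_i=\abar_{i-1/4}\abar_i$ $(i\in S)$ and $\bbar^*_j=\bbar_j\bbar_{j+1/4}$ $(j\in T)$; inspection of that construction shows $|M'_G| = O(|S|+|T|+|E|+1)$, since $M'_\emptyset$ is a fixed finite structure enlarged by $O(m(|S|+|T|))$ elements and then by $O(m^2)$ elements per edge, where $m$ is an absolute constant.

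Now form $\LL^*$ by adjoining to $\LL$: a unary predicate $P_t$ naming each coordinate of $\bar p$ (there are finitely many); the strip predicates $U_\ell$ of Remark~\ref{addstrips} applied to $\A$; and a few further unary predicates marking, within the concatenations $\abar^*_i$ and $\bbar^*_j$, which coordinates belong to the $\xbar$-part versus the $\xbar'$-part (resp. $\ybar$ versus $\ybar'$) and, among the $\xbar$-coordinates, a single distinguished coordinate to serve as a ``row marker'', and similarly on the $\ybar$ side. Let $\CC^*$ be the resulting expansion of $\CC$ and $M^*_G$ the corresponding expansion of $M'_G$. All constants of $L'$ are now eliminable in favour of $P_t$-restricted quantifiers, so $\theta$ is equivalent over $M^*_G$ to an $\LL^*$-formula $\theta^*$ without constants. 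Define $U^*(x)$ to hold iff $x$ is the distinguished $\xbar$-coordinate of some $\abar^*_i$ (a disjunction of strip predicates together with the row-marker predicate), define $V^*(x)$ dually, and define $E^*(x,y)$ to hold iff there exist tuples $\xbar^*\ni x$, $\ybar^*\ni y$ with coordinates picked out by the appropriate strip predicates and with $x,y$ in the distinguished coordinates, such that $\theta^*(\xbar^*,\ybar^*)$. By $\P$-indiscernibility of $\A$ and the properties established in the Claim of Lemma~\ref{l:collNIP}, within $M^*_G$ the distinguished coordinate of $\abar^*_i$ determines $i$, the existential quantifiers recover exactly the tuples $\abar^*_i,\bbar^*_j$, and hence $(U^*(M^*_G),V^*(M^*_G);E^*(M^*_G))\cong(S,T;E)=G$. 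Finally $|M^*_G| = |M'_G| = O(|U|+|V|+|E|)$, as required.
\end{proof}
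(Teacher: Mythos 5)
Your overall strategy follows the paper's (start from Corollary~\ref{splitexists}, build linear-size finite substructures, add unary predicates to pass to singletons), but there is a genuine gap at the step you describe as ``the existential quantifiers recover exactly the tuples $\abar^*_i,\bbar^*_j$.'' The strip predicates of Remark~\ref{addstrips} only record which \emph{coordinate position} a point occupies; they do not tie together the coordinates belonging to the same index $i$. So when $E^*(x,y)$ existentially quantifies over ``the remaining coordinates of the tuple containing $x$,'' nothing prevents the quantifiers from assembling a \emph{mixed} tuple $\dbar$ whose coordinates come from several different $\abar^*_{i'}$'s; such a $\dbar$ could satisfy $\exists z\,\chi(\dbar,\bbar^*_j,z)$ spuriously and create edges not present in $G$. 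The Claim in Lemma~\ref{l:collNIP} says nothing about mixed tuples --- it only evaluates $\theta$ on the genuine pairs $(\abar^*_i,\bbar^*_j)$ --- and $\P$-indiscernibility alone does not rule this out. This is exactly the difficulty the paper's proof is organized around: it first minimizes over ``allowable specializations'' of $\chi$, then introduces the auxiliary predicates $E_I, F_I, E_J, F_J$ (supported by the ``cilia'' needed to make the strip-bounded existentials absolute between the finite structure and the monster, Claim~\ref{first}), and proves via the minimality condition and \cite{MonNIP}*{Lemma 4.11} that the formula $\alpha(\xbar)$ is satisfied \emph{only} by the genuine tuples $\abar_i$ (Claim~\ref{second}). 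Your ``row marker'' predicate identifies one coordinate per genuine tuple but gives no definable way to recover the rest of that tuple from it, so it does not substitute for this argument.

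A secondary, smaller point: even restricted to genuine tuples, your $E^*$ evaluates an existential formula inside the finite structure, and you need the witnesses to the inner quantifiers of $\chi$ to exist \emph{inside} $M^*_G$ for edges and to be controllably absent for non-edges. Lemma~\ref{l:collNIP} handles this for its specific $M'_G$ via the order-preserving maps $g'$, but once you change the universe (e.g.\ by your blow-up $G'=G[m_{U^*},m_{V^*}]$) you must re-verify it; the paper instead bounds each existential quantifier to a strip $C^\ell$ and surrounds each relevant $c$ with a fixed grid of $\cbar$-tuples (the cilia) so that witnesses are always locally available. Neither issue is fatal to the general plan, but both require the machinery you have omitted.
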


\begin{proof}  As $Th(\CC)$ is not monadically NIP, by Corollary \ref{splitexists} we can find a saturated model $\C\models Th(\CC)$ with a partitioned, $\P$-indiscernible $\A\subseteq\C$
for which there is a quantifier-free $\psi(\xbar,\ybar,z,\wbar)$ such that, letting $\phi(\xbar,\ybar,z):=\exists\wbar\psi$ and letting $\cbar^0_{i,j}$ denote the $0^{th}$ coordinate of $\cbar_{i,j}$
we have, for all $i\in I$, $j\in J$,
\begin{itemize}
\item  $\C\models\psi(\abar_i,\bbar_j,\cbar_{i,j})$;
\item  $\C\models\neg\phi(\abar_{i'},\bbar_j,\cbar^0_{i,j})$ for all $i'<i$ from $I$; and
\item  $\C\models\neg\phi(\abar_i,\bbar_{j'},\cbar^0_{i,j})$ for all $j'>j$ from $J$.
\end{itemize}

We now `duplicate' $\A$ according to Remark~\ref{duplication} twice, once for $I$ and once for $J$.
Let $\xbar^*:=\xbar'\xbar$ and $\ybar^*:=\ybar\ybar'$, and (defining $I_0, I_1, J_0, j_1$  $i^-$ as in Remark \ref{duplication}, as well as $i^-$ and $j^+$) let $\abar_i^*:=\abar_{i^-}\abar_i$ for each $i\in I_1$
and let $\bbar_j^*:=\bbar_j\bbar_{j^+}$ for each $j\in J_0$.  
Also, put $$\chi(\xbar^*,\ybar^*,z):=\phi(\xbar,\ybar,z)\wedge\neg
\phi(\xbar',\ybar,z)\wedge\neg\phi(\xbar,\ybar',z)$$
We obtain, for each $i,k\in I_1$ and each $j,\ell\in J_0$,
$$\C\models \chi(\abar_i^*,\bbar_j^*,\cbar^0_{k,\ell})\quad \hbox{if and only if} \quad (i,j)=(k,\ell)$$	

Next, we look at all specializations $\chi'(\xbar',\ybar',z)$ formed by replacing some of the free variables in $\xbar^*$ by constants representing elements of $\bigcup\{\abar_i^*:i\in I_1\}$
and replacing some of the free variables in $\ybar^*$ by constants representing elements of $\bigcup\{\bbar_j^*:j\in J_0\}$.
Call such a specialization {\em allowable} if there are convex subsets $I'\subseteq I_1$, $J'\subseteq J_0$ such that all of the parameters added are from $\bigcup\{\abar_i^*:i\in I_1\setminus I'\}\cup\bigcup\{\bbar_j^*:j\in J_0\setminus J'\}$ and $\C$ still satisfies
$$\C\models \chi'(\abar_i',\bbar_j',\cbar^0_{k,\ell})\quad \hbox{if and only if} \quad (i,j)=(k,\ell)$$
for all $i,k \in I'$, $j,\ell\in J'$, where $\abar'_i$ is the restriction of $\abar_i^*$ to the free variables $\xbar'$ and dually for $\bbar_j'$.

Clearly, $\chi$ itself is allowable, taking $I'=I_1$ and $J'=J_0$.  Among all such allowable specializations, choose one that minimizes $\lg(\xbar')+\lg(\ybar')$ and
add these  new constant symbols to the language.

After reindexing,  replacing $I_1$ by $I'$, $J_0$ by $J'$, $\abar^*_i$ by $\abar'_i$, and $\bbar^*_j$ by $\bbar'_j$, we have (by Remark~\ref{P0} applied to $I',J'$) a partitioned, 
$\P$-indiscernible (in this larger language) $\A'$
and $\chi'(\xbar',\ybar',z)$, which is a boolean combination of specializations of $\phi$
such that $$\C\models \chi'(\abar_i',\bbar_j',\cbar^0_{k,\ell})\quad \hbox{if and only if} \quad (i,j)=(k,\ell)$$
for all $i,k\in I'$ and $j,\ell\in J'$.  The additional minimality property we have gained on $\chi'$ will be used in the proof of Claim~\ref{second}. Let $m_a = \lg(\abar_i')$, $m_b = \lg(\bbar_j')$, and $m_c = \lg(\cbar_{i,j})$.

After doing this minimization, 
let $\C^+$ denote the expansion of $\C$ by the unary predicates defining the strips of $\A'$; we use $A^j$ to define the $j^{th}$ coordinate strip of $(\abar_i : i \in I)$ for $0 \leq j \leq m_a-1$, and similarly $B^j$ and $C^j$.  By Remark~\ref{addstrips}, $\A'$ remains $\P$-indiscernible 
in $\C^+$ with respect to all $L^+$-formulas. Now, to further simplify the notation, remove the primes from all of the items discussed above.

Let $\chi^+(\xbar,\ybar,z)$ be the same formula as $\chi$, except that we add a conjunct stating that $C^0(z)$ holds, and in each instance of $\phi$ or its negation, we replace $\exists \wbar$ by
$\exists w_1\in C^1\exists w_2\in C^2\dots\exists w_{m_c-1}\in C^{m_c-1}$.  Note that we still have 
$$\C^+\models \chi^+(\abar_i,\bbar_j,\cbar^0_{k,\ell})\quad \hbox{if and only if} \quad (i,j)=(k,\ell)$$
for all $i,k \in I$, $j,\ell\in J$.

We now define various subsets of the index sets $(I,\le)$ and $(J,\le)$, recalling that both of these are isomorphic to $(\Q,\le)$.
Let $D_I$ be a discrete subset of $I$ of order type $\omega$, and let $i^*$ denote the least element of $D_I$.
For each $i\in D_I$, choose  `neighbors' $i^-,i^+$ such that $i^-<i<i^+$ and, letting $Nb(i)=\{i^-,i,i^+\}$,  such that
$Nb(i)\ll Nb(i')$ whenever $i<i'$ in $D_I$.

Put $3D_I:=\bigcup\{Nb(i):i\in D_I\}$.  For each $i\in 3D_I$, let its `cilia' $Cil(i)$ consist of $2m_c+1$ points, centered at $i$, 
such that $Cil(i)\ll Cil(i')$ whenever $i<i'$ in $3D_I$.

Similarly, let $D_J\subseteq J$ be discrete of order type $\Z^{\leq 0}$ with largest element $j^*$, and define
$N(j)$, $3D_J$, and $Cil(j)$ analogously.  

Let $\C^{++}$ be the expansion of $\C^+$ formed by  naming each element of $\abar_{i^*}$ and $\bbar_{j^*}$ by constant symbols and adding the following five unary predicates:
\begin{itemize}
\item  $E_I:=\{\cbar^0_{i,j^*}:i\in D_I\}$;
\item  $F_I:=\{\cbar^0_{i,j}:i\in 3D_I,j\in Nb(j^*)\}$;
\item  $E_J:=\{\cbar^0_{i^*,j}:j\in D_J\}$; 
\item  $F_J:=\{\cbar^0_{i,j}:i\in Nb(i^*),j\in 3D_J\}$; and
\item  $N:=\bigcup\{\abar_i:i\in D_I\}\cup\bigcup\{\bbar_j:j\in D_J\}\cup\bigcup\{\cbar_{i,j}:i\in Cil(3D_I),j\in Nb(j^*)\}\cup \bigcup \{\cbar_{i,j}:i\in Nb(i^*), j\in Cil(3D_J)\}$
\end{itemize}

We also define two $L^{++}$-formulas 
$$\alpha(\xbar):=\bigwedge_{\ell<m_a} A^\ell(x_\ell)\wedge \exists z (E_I(z)\wedge \chi^+(\xbar,\bbar_{j^*},z))
\wedge \forall z[(\chi^+(\xbar,\bbar_{j^*},z)\wedge F_I(z))\rightarrow E_I(z)]$$

$$\beta(\ybar):=\bigwedge_{\ell<m_b} B^\ell(y_\ell)\wedge \exists z (E_J(z)\wedge \chi^+(\abar_{i^*},\ybar,z))
\wedge \forall z[(\chi^+(\abar_{i^*},\ybar,z)\wedge F_J(z))\rightarrow E_J(z)]$$

Now, given any finite sets $S\subseteq D_I\setminus \{i^*\}$ and $T\subseteq D_J\setminus\{j^*\}$, consider the $L^{++}$-substructure $N_{S,T}\subseteq \C^{++}$
with universe
\begin{align*}
\pbar\bigcup&\{\abar_i:i\in S \cup \set{i^*}\}\bigcup\{\bbar_j:j\in T \cup \set{j^*}\}\bigcup \\ 
&\{\cbar_{i,j}: (i,j) \in (Cil(S) \times Nb(j^*)) \cup (Nb(i^*) \times Cil(T))\}    
\end{align*}
where $\pbar$ are the interpretations of the $L^{++}$-constant symbols. 
Note that the cardinality $|N_{S,T}|$ is $O(|S|+|T|)$.  

\begin{claim}    \label{first}   For any $\dbar \subset \bigcup\set{\abar_i : i \in S}$ of length $m_a$ and any $c\in F_I\cap N_{S,T}$,
$\C^{++}\models\chi^+(\dbar,\bbar_{j^*},c)$ if and only if $N_{S,T}\models\chi^+(\dbar,\bbar_{j^*},c)$.
\end{claim}

\begin{claimproof}  As $\chi^+$ is a boolean combination of existential formulas $\delta$ in which every existential quantifier is bound to some $C^\ell$,
it suffices to prove that if $\C^{++}\models \exists w_1\in C^1\dots\exists w_{m_c-1}\in C^{m_c-1}\rho(\dbar,\bbar_{j^*},c,w_1,\dots,w_{m_c-1})$,
where $\rho$ is a quantifier-free $L^{++}$-formula, then the same holds in $N_{S,T}$.  But this is ensured by the cilia around each point of $3D_I$ and of $N(j^*)$ and by the $\P$-indiscernibility of $\A$. In particular, in $N_{S,T}$ each $c \in F_I$ is at the center of a $(2m_c+1) \times (2m_c+1)$ grid of $\cbar$-tuples arising from the cilia, and by $\P$-indiscernibility if there exist witnesses in $\C^{++}$ to the existential quantifiers above, then witnesses can be found within these grids.
\end{claimproof}

\begin{claim}  \label{second} The $L^{++}$-formula $\alpha(\xbar)$ defines $\{\abar_i:i\in S\}$ in $N_{S,T}$.
\end{claim}

\begin{claimproof}  First, for each $i\in S$, $\C^{++}\models\chi^+(\abar_i,\bbar_{j^*},\cbar^0_{k,\ell})$ if and only if $(i,j^*)=(k,\ell)$.  
Thus, by Claim~\ref{first}, $\cbar^0_{i,j^*}$ is the unique solution to $\chi^+(\abar_i,\bbar_{j^*},z)$ in $N_{S,T}$.  As $E_I(\cbar^0_{i,j^*})$ holds, we have 
$N_{S,T}\models\alpha(\abar_i)$.

Conversely, assume $N_{S,T}\models\exists z(\chi^+(\dbar,\bbar_{j^*}, z)\wedge E_I(z))$.  By construction, any witness $z$ has the form $\cbar^0_{i,j^*}$ for some $i\in S$.
Now assume $\dbar\neq\abar_i$ (and so $\dbar \neq \abar_k$ for any $k$) and we will show $N_{S,T}\models\chi^+(\dbar,\bbar_{j^*},c')$ where $c'$ is one of the eight points $\cbar^0_{i',j'}$ where $i'\in Nb(i)$, $j'\in Nb(j^*)$,
with $(i',j')\neq(i,j^*)$.  As each of these eight points is in $F_I$, we would conclude $N_{S,T}\models\neg\alpha(\dbar)$.  
To show the missing step, note that by Claim~\ref{first}, it suffices to prove this in $\C^{++}$.  The proof of \cite{MonNIP}*{Lemma 4.11} shows precisely that if $\chi^+$ satisfies the minimality condition we have imposed on it (i.e., that there are no allowable specializations), then for every $\epsilon > 0$ there is some $\cbar^0_{i',j'}$ such that $(i',j') \neq (i,j^*)$, $i - \epsilon < i < i + \epsilon$ and $j^* - \epsilon < j' < j^* + \epsilon$, and $\C^{++} \models \chi^+(\dbar, \bbar_{j^*}, \cbar^0_{i',j'})$. By $\P$-indiscernibility we may take $\cbar^0_{i',j'}$ to be one of the eight points described above.
\end{claimproof}

By symmetric claims and identical reasoning, we conclude that $\beta(\ybar)$ defines $\{\bbar_j:j\in T\}$ in $N_{S,T}$.

We are now ready to form our finite structures $M^*_G$ encoding bipartite graphs.  
Given any finite bipartite graph $G=(S,T;E)$,  let
$M^*_G$ be the $\C^{++}$ substructure with universe
$$\pbar\cup N_{S,T}\cup\{\cbar_{i,j}:(i,j)\in E\}$$
Visibly, $|M^*_G|$ is $O(|S|+|T|+|E|)$.  Since $N_{S,T}$ is definable in $M^*_G$ via $\gamma(x):=N(x)\vee x\in\pbar$, we may define $\alpha^\gamma(\xbar)$ as the ``$\gamma$-relativized'' version of $\alpha$, by taking $\alpha$ and requiring that all variables (free and bound) belong to $\gamma$, and similarly define $\beta^\gamma$. 
Now, put  $U^*(x) := A^0(x) \wedge x \neq \abar_{i^*}^0$, $V^*(y) := B^0(y) \wedge  y \neq \bbar_{j^*}^0$, and $E^*(x,y) := \exists \xbar \ybar(\alpha^\gamma(x\xbar) \wedge \beta^\gamma(y\ybar) \wedge \exists z \chi^+(x\xbar, y\ybar, z))$.
\end{proof}

\begin{theorem} \label{thm:growth}
	Let $\CC$ be a hereditary class in a relational language. If $Th(\CC)$ is not monadically NIP, then there is some $k \in \omega$ such that the unlabeled growth rate $f_\CC(n) = \Omega(\floor{n/k}!)$. 
\end{theorem}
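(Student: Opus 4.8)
The strategy is to feed the combinatorial output of Proposition~\ref{prop:transd} into a counting argument. By Proposition~\ref{prop:transd}, there is a finite unary expansion $\LL^* \supseteq \LL$, a corresponding expansion $\CC^*$ of $\CC$, and formulas $U^*, V^*, E^*$ on singletons such that every finite bipartite graph $G = (U,V;E)$ is realized as $(U^*(M^*_G), V^*(M^*_G); E^*(M^*_G))$ for some $M^*_G \in \CC^*$ with $|M^*_G| = O(|U| + |V| + |E|)$. Fix the implied constant and call it $c$; so there is a constant $k$ (depending only on $c$ and the number of added unary predicates) with the property that for each $n$, restricting to bipartite graphs on vertex sets of size $\floor{n/k}$ on each side (hence at most $\floor{n/k}^2$ edges), the corresponding $M^*_G$ has $|M^*_G| \le n$.

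\textbf{Step 1: count bipartite graphs on a bounded number of vertices.} The number of \emph{unlabeled} bipartite graphs $(U, V; E)$ with $|U| = |V| = t$ grows at least like $2^{t^2}/(t!)^2 = \Omega((t^2)!/ \text{small})$; more to the point, since we actually get to specify which vertex is which (via $U^*$ and $V^*$, we recover the bipartition and the edge relation on labelled vertices once we name them), it is cleaner to count \emph{labelled} bipartite graphs with a distinguished bijection recording the order, but the robust bound we want is simply that the number of isomorphism types of such graphs is at least $\floor{n/k}!$ for a suitable reindexing of the parameter --- this follows because even among bipartite graphs where $V$ has a single vertex adjacent to an arbitrary subset of a $t$-element $U$, there are $2^t$ isomorphism types, and a more careful count (e.g. encoding a permutation matrix, so that $G$ determines and is determined by an element of $S_t$) gives $t!$ pairwise non-isomorphic bipartite graphs on $t + t$ vertices. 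Taking $t = \floor{n/k}$ and absorbing the $O(\cdot)$ bound on $|M^*_G|$ into the choice of $k$, we obtain $t!$ pairwise non-isomorphic structures $M^*_G \in \CC^*$, each of size at most $n$.

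\textbf{Step 2: descend from $\CC^*$ to $\CC$.} The structures $M^*_G$ live in $\CC^*$, an expansion of $\CC$ by finitely many unary predicates, say $r$ of them. Two $\LL^*$-structures with non-isomorphic $(U^*, V^*; E^*)$-reducts certainly have non-isomorphic $\LL^*$-reducts; the issue is that their $\LL$-reducts (forgetting the unary predicates) might coincide. But a single $\LL$-structure $M$ of size $m$ admits at most $2^{rm}$ distinct expansions to an $\LL^*$-structure (each of the $r$ predicates is an arbitrary subset of the $m$-element universe, up to the constraint of lying in $\CC^*$). Hence if $M^*_{G_1}, \dots, M^*_{G_N}$ are pairwise non-isomorphic $\LL^*$-structures each of size $\le n$, their $\LL$-reducts include at least $N / 2^{rn}$ pairwise non-isomorphic $\LL$-structures. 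However $N / 2^{rn} = t!/2^{rn}$ is \emph{not} bounded below by a factorial in $n$ --- the $2^{rn}$ factor is too large. This is the main obstacle, and it forces a slightly different bookkeeping: we must choose $k$ so large that even after dividing by $2^{rn}$ the count remains $\Omega(\floor{n/k}!)$. Since $t! = \floor{n/k}!$ grows faster than any single exponential $2^{rn}$ once we are free to shrink $t$ relative to $n$ --- more precisely $\floor{n/k}! / 2^{rn} = \Omega(\floor{n/(k+1)}!)$ for all large $n$, because $\log(\floor{n/k}!) \sim (n/k)\log(n/k)$ dominates $rn$ --- replacing $k$ by $k+1$ (or any sufficiently larger constant) absorbs the exponential loss. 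Thus after this final adjustment of $k$ we conclude $f_\CC(n) = \Omega(\floor{n/k}!)$, as the $\LL$-reducts of the $M^*_G$ are all members of $\CC$ of size at most $n$.

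\textbf{Where the difficulty lies.} The genuinely delicate points are (a) making sure the size bound $|M^*_G| = O(|S| + |T| + |E|)$ from Proposition~\ref{prop:transd} is used with the right choice of bipartite graphs --- one wants $|E|$ controlled, which is why encoding a permutation matrix (with $|S| = |T| = t$ and exactly $t$ edges, so $|M^*_G| = O(t)$) is preferable to using dense bipartite graphs; and (b) the descent from $\CC^*$ to $\CC$, where one must verify that the exponential blow-up from forgetting finitely many unary predicates is swamped by the super-exponential (factorial) count, which is handled simply by enlarging the constant $k$. Everything else is routine: the realization of each graph inside $\CC^*$ is exactly what Proposition~\ref{prop:transd} provides, and the asymptotic comparison $\floor{n/k}!/2^{rn} = \Omega(\floor{n/k'}!)$ for $k' > k$ is elementary via Stirling's formula.
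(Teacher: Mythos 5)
Your overall architecture matches the paper's: invoke Proposition~\ref{prop:transd}, produce superexponentially many pairwise non-isomorphic bipartite graphs whose edge count is linear in the target size $n$, observe that $G \not\cong H$ forces $M^*_G \not\cong M^*_H$ (since the interpreted graph is an isomorphism invariant of $M^*_G$), and absorb the exponential loss from forgetting the finitely many unary predicates and constants into the factorial. Your Step~2 is correct and is exactly the paper's (more tersely stated) final remark.

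However, Step~1 contains a genuine error: neither of your proposed counts of \emph{isomorphism types} of bipartite graphs is valid. A bipartite graph in which $V$ is a single vertex adjacent to a subset $S$ of a $t$-element set $U$ is determined up to isomorphism by $|S|$, so there are only $t+1$ isomorphism types, not $2^t$. More seriously, the permutation-matrix encoding fails for the same reason: every permutation $\sigma \in S_t$ yields a perfect matching between two $t$-element sides, and all such matchings are isomorphic as bipartite graphs, so this construction produces exactly \emph{one} isomorphism type, not $t!$. Since the argument only gives $G \not\cong H \Rightarrow M^*_G \not\cong M^*_H$ (and not the converse), isomorphic input graphs buy you nothing. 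Producing $\Omega(\lfloor n/5\rfloor!)$ pairwise non-isomorphic bipartite graphs with $n$ edges and no isolated vertices is a nontrivial combinatorial fact --- one must break the symmetry that kills the permutation-matrix idea while keeping the edge count linear --- and the paper handles it by citing Case~(a) of the proof of Theorem~1.5 of \cite{rapid} rather than by a two-line construction. Your proof needs either that citation or an explicit symmetry-breaking gadget (with linearly many edges) in its place; as written, the counting step does not go through.
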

\begin{proof}	
	If $\CC$ is not monadically NIP, let $\CC^*$ be an expansion and $U^*, V^*, E^*$ be formulas as in Proposition \ref{prop:transd}. Given a bipartite graph $G = (U,V; E)$, let $M^*_G$ be as in Proposition \ref{prop:transd}. We have $|M^*_G| = O(|U|+|V| + |E|)$, so if $G$ has $n$ edges and no isolated vertices, then $|M^*_G| \leq Kn$ for some  $K \in \omega$ depending only on $\CC$.  As shown within Case (a) of the proof of \cite[Theorem 1.5]{rapid}, the number of such graphs is $\Omega(\floor{n/5}!)$. If $G \not\cong H$ then $M^*_G \not\cong M^*_{H}$, so $f_{\CC^*}(n) = \Omega(\floor{n/5K}!)$. Having added finitely many unary predicates and named finitely many constants in passing to $\CC^*$ affects the growth rate by at most an exponential factor, so we obtain the desired bound on $f_{\CC}(n)$.
\end{proof}

\begin{remark}
	{\em The optimality of the lower bound in this theorem is witnessed by the family of hereditary classes $\set{Perm_k | k \in \Z^+}$, where $Perm_k$ encodes permutations on disjoint $k$-tuples, i.e., the language of $Perm_k$ is two $2k$-ary relations, and the structures are obtained by taking a permutation (represented as a structure in the language of two linear orders) and blowing up each point to a $k$-tuple with no further structure, and then closing under substructure. Consider a structure in $Perm_{k+1}$. By separating those points in a full $(k+1)$-tuple from those that are not, the growth rate of $Perm_{k+1}$ is seen to be bounded above by $n \floor{n/(k+1)}!$, which is $O\left(\floor{n/k}!\right)$.}
\end{remark}

\bibliographystyle{amsplain}
\bibliography{Bib}

\end{document}